\newtheorem{thm}{Theorem}[subsection] 
\newtheorem{cor}[thm]{Corollary}
\newtheorem{lem}[thm]{Lemma}  
\newtheorem{prop}[thm]{Proposition}
\newtheorem{theorem}[thm]{Theorem}
\theoremstyle{definition}
\newtheorem{defn}[thm]{Definition}
\newtheorem{rem}[thm]{Remark}
\newtheorem{secnumber}[thm]{}
\numberwithin{equation}{thm}
\newcommand{\bb}[1]{\mathbb{#1}}
\newcommand{\rr}[1]{\mathrm{#1}}
\newcommand{\cc}[1]{\mathcal{#1}}
\newcommand{\scr}[1]{\mathscr{#1}}
\newcommand{\cros}{^{\times}}
\newcommand{\dR}{\mathrm{dR}}
\newcommand{\gal}{\mathrm{Gal}}
\newcommand{\gr}{\mathrm{gr}}
\newcommand{\Hdr}[1]{\mathrm{H}^{#1}_{\mathrm{dR}}}
\newcommand{\HH}{\mathrm{H}}
\newcommand{\Hyp}{\mathcal{H}yp}
\newcommand{\im}{\mathrm{im}}
\newcommand{\inv}{^{-1}}
\newcommand{\point}{\mathcal{P}}
\newcommand{\pr}{\mathrm{pr}}
\newcommand{\tor}{\mathrm{tor}}
\DeclareMathOperator{\ord}{ord}
\DeclareMathOperator{\id}{id}
\DeclareMathOperator{\HP}{HP}
\DeclareMathOperator{\Tr}{Tr}
\DeclareMathOperator{\Nm}{Nm}
\DeclareMathOperator{\CHyp}{Hyp}
\DeclareMathOperator{\irr}{irr}
\let\mod\Mod
\let\emptyset\varnothing
\let\oplus\bigoplus
\let\tilde\widetilde
\def\Gm{\mathbb{G}_m}
\def\DHyp{\mathscr{H}yp}
\def\dd{\mathbf{d}}
\newcommand{\quash}[1]{}
\newcommand\scalemath[2]{\scalebox{#1}{\mbox{\ensuremath{\displaystyle #2}}}}
\title{Irregular Hodge filtration of hypergeometric differential equations}
\begin{document}

\author{Yichen Qin, Daxin Xu}
\maketitle

\begin{abstract}
	Fedorov and Sabbah--Yu calculated the (irregular) Hodge numbers of hypergeometric connections. 
	In this paper, we study the irregular Hodge filtrations on hypergeometric connections defined by rational parameters, and provide a new proof of the aforementioned results. 
	Our approach is based on a geometric interpretation of hypergeometric connections, which enables us to show that certain hypergeometric sums are everywhere ordinary on $|\mathbb{G}_{m,\mathbb{F}_p}|$ (i.e. ``Frobenius Newton polygon equals to irregular Hodge polygon''). 
\end{abstract}

\section{Introduction} \label{s:intro}

The primary focus of this article is to investigate the Hodge theoretic properties of confluent hypergeometric differential equations. These differential equations have irregular singularities and are equipped with \textit{irregular Hodge filtrations}, constructed in \cite{Sabbah18irregular}.
The irregular Hodge theory, initiated by Deligne \cite{Del07}, extends the classical Hodge theory and has been developed in the works of Sabbah, Yu, Esnault--Sabbah--Yu, Sabbah--Yu \cite{Sabbah-2010-laplace,yu2012irregular,esnault17e1, Sabbah-Yu-2015, Sabbah18irregular}. 

Let $n\geq m$ be two non-negative integers, $\lambda$ a real number, and $\alpha=(\alpha_1,\ldots,\alpha_n)$ and $\beta=(\beta_1,\ldots,\beta_m)$ two non-decreasing sequences of real numbers in $[0,1)$. 
Let $S$ be the scheme $\bb{G}_m\backslash\{1\}$ (resp. $\bb{G}_m$) if $n=m$ (resp. $n>m$) with coordinate $z$. The \textit{hypergeometric equation} is the linear differential equation defined by the differential operator
		\begin{equation}\label{eq:Hyp-eq}
			\begin{split}
				\rr{Hyp}_\lambda(  \alpha ;  \beta ):= \lambda \prod_{i=1}^n (z\partial_z-\alpha_i)-  z\prod_{j=1}^m (z\partial_z-\beta_j).
			\end{split}
		\end{equation}
	The \textit{hypergeometric connection} $\Hyp_\lambda(\alpha;\beta)$ is the associated connection on the complex algebraic variety $S_{\mathbb{C}}$, see \eqref{eq:Hyp-conn}.  
We say that the pair $(\alpha, \beta)$ is \textit{non-resonant} if $\alpha_i\neq \beta_j$ for any $i$ and $j$. 
In this case, the hypergeometric connection $\Hyp_\lambda(\alpha;\beta)$ is irreducible and rigid, as seen by combining the works of Beukers--Heckman \cite{Beukers-Heckman} and Katz \cite{katz1990exponential}.

When $n=m$, hypergeometric connections have regular singularities at $0,1$, and $\infty$. 
Simpson demonstrated that rigid irreducible connections on curves with regular singularities, whose eigenvalues of monodromy actions at singularities have norm $1$, underlie complex variations of Hodge structure \cite[Cor.\,8.1]{simpson90harmonic}.
In this case, Fedorov computed the Hodge numbers associated with the Hodge filtrations of irreducible hypergeometric connections \cite{fedorov2017hypergeometric}. An alternative proof was given by Martin \cite{martin2018}. 

When $n>m$, hypergeometric connections are called \textit{confluent}, indicating the merging of singularities, and have a regular singularity at $0$ and an irregular singularity at $\infty$. 
Sabbah showed in \cite[Thm.\,0.7]{Sabbah18irregular} that a rigid irreducible connection on $\mathbb{P}^1$ with real formal exponents at each singular point admits a variation of irregular Hodge structures away from singularities. 
For hypergeometric connections, Sabbah and Yu computed the corresponding irregular Hodge numbers \cite{sabbah-hypergeometric}. 
In addition, Castaño Domínguez--Sevenheck \cite[Thm.\,4.7]{DS19} and Castaño Domínguez--Reichelt--Sevenheck \cite[Thm.\,5.8]{DRS19} explicitly calculated the irregular Hodge filtration for $m=0$ or $1$, respectively. 

In this article, we focus on the case where $\lambda$, $\alpha$, and $\beta$ are rational numbers. 
We explicitly construct the irregular Hodge filtration $F_{\irr}^{\bullet}$ on hypergeometric connections in \cref{thm:Hodge-fil} and provide a uniform method for reproving theorems of Fedorov and Sabbah--Yu. 

\begin{thm}[\ref{thm:Hodge-fil}]\label{thm:Hodge-intro} 
	Suppose $(\alpha,\beta)$ is non-resonant. 
	We define a map $\theta\colon \{1,\ldots,n\}\to \bb{R}$ by
		\begin{equation}\label{eq:theta(k)}
			\theta(k)= (n-m)\alpha_k+\#\{i\mid \beta_i< \alpha_k\}+(n-k)-\sum_{i=1}^n \alpha_i+\sum_{j=1}^m\beta_j. 
		\end{equation}
		Then, up to an $\bb{R}$-shift\footnote{Our Hodge numbers $\theta(k)$'s are normalized according to the geometric interpretation in Proposition~\ref{prop::geometric-realization}, and is different from those of Fedorov and Sabbah--Yu by a shift.}, the jumps of the irregular Hodge filtration on $\Hyp_\lambda(\alpha,\beta)$ occur at $\theta(k)$ and for any $p\in \bb{R}$ we have 
	$$\rr{rk}\,\mathrm{gr}^p_{F_{\irr}}\Hyp_\lambda(\alpha;\beta) = \#\theta\inv(p).$$
\end{thm}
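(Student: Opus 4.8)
The plan is to deduce the formula by transporting the problem, via a geometric realization, to an arithmetic statement about hypergeometric exponential sums, and then reading off the Hodge numbers from a Newton polygon computation.

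\textbf{Step 1: Geometric model carrying the irregular Hodge filtration.} When $\lambda,\alpha,\beta$ are rational, one first clears denominators by a Kummer pullback and realizes $\Hyp_\lambda(\alpha;\beta)$, up to a Kummer/Tate twist, as a subquotient of the exponentially twisted Gauss--Manin cohomology of an explicit morphism $f\colon X\to S$ equipped with a regular function $g$ on $X$ (this is \cref{prop::geometric-realization}; concretely $\Hyp$ arises as a multiplicative convolution of Kummer sheaves and an Artin--Schreier-type sheaf, hence as a middle-dimensional cohomology of a family of affine varieties with potential). The point of working with a model of this form is that it is of geometric origin in the sense of irregular Hodge theory, so by the constructions of Yu and Esnault--Sabbah--Yu \cite{yu2012irregular,esnault17e1,Sabbah-Yu-2015,Sabbah18irregular} it carries a canonical irregular Hodge filtration; since $(\alpha,\beta)$ non-resonant forces irreducibility and rigidity (Beukers--Heckman and Katz), the essential uniqueness up to an $\bb{R}$-shift of $F_{\irr}^\bullet$ on such connections identifies this filtration with the one we wish to describe, the Kummer pullback and twist accounting for the global shift.

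\textbf{Step 2: A filtered twisted de Rham complex.} On this model $F_{\irr}^\bullet$ is computed by a filtered twisted de Rham complex: take a good compactification of $(X,g)$ relative to $S$, form its logarithmic de Rham complex with connection $\nabla+dg$, and equip it with the pole-order filtration (equivalently the $V$-filtration along the pole divisor of $e^{g}$ shifted by the pole orders). The induced filtration on cohomology is $F_{\irr}^\bullet$, and I would make the compactification and $g$ explicit enough that computing $\mathrm{gr}^p_{F_{\irr}}$ becomes a combinatorial problem about monomials on an affine/toric variety, governed by the Newton polytope of $g$ together with the Kummer exponents. The summands $(n-m)\alpha_k$, $\#\{i\mid\beta_i<\alpha_k\}$, $n-k$ and the shift $-\sum_i\alpha_i+\sum_j\beta_j$ in \eqref{eq:theta(k)} should then emerge as the weights attached to an explicit basis $e_1,\dots,e_n$ of the cohomology adapted to this combinatorics.

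\textbf{Step 3: Reduction modulo $p$ and everywhere-ordinariness.} For all but finitely many primes $p$ (those not dividing the denominators of $\lambda,\alpha,\beta$), reduce the model modulo $p$; the twisted cohomology then underlies an overconvergent $F$-isocrystal on $\Gm$ over $\bb{F}_p$ whose Frobenius Newton polygon $\NP$ lies on or above the Hodge polygon $\HP$ attached to $F_{\irr}^\bullet$, by the general ``Newton above Hodge'' principle. The heart of the argument is the reverse inequality, pointwise on the base: for every closed point $t\in|\Gm|$ the specialized hypergeometric exponential sum is ordinary, i.e. $\NP=\HP$ at $t$. I would prove this by computing the $L$-function of the sum through Dwork's $p$-adic cohomology (or Adolphson--Sperber's theorem, after arranging the defining Laurent polynomial, with its additive potential and multiplicative Kummer twist, to be non-degenerate with respect to its Newton polytope): its Newton polygon is then given by the explicit combinatorial recipe attached to that polytope, and matching this recipe with the polytope of Step 2 forces equality. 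Ordinariness at every closed point propagates to $\NP=\HP$ as polygons on all of $\Gm$, which pins down the $F_{\irr}$-jumps and their multiplicities.

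\textbf{Step 4: Combinatorial identification and main obstacle.} It remains to match the multiset of slopes produced in Step 3 with $\{\theta(1),\dots,\theta(n)\}$: non-resonance guarantees rank exactly $n$ with no spurious pieces, and careful bookkeeping of the Kummer pullback and twist from Step 1 reconciles the normalization with that of Fedorov and Sabbah--Yu, producing precisely \eqref{eq:theta(k)}; this is a direct, if delicate, count of exponents. The main obstacle is Step 3, namely establishing everywhere-ordinariness of the hypergeometric exponential sums: controlling the Frobenius Newton polygon \emph{uniformly over all of} $|\Gm|$ — in particular at the singular points and in the confluent direction, where the Laurent polynomial mixes an additive potential with a multiplicative twist — is exactly where the standard Dwork/Adolphson--Sperber machinery must be adapted, and it is the technical core on which the whole proof rests.
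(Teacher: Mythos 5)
Your Steps 1 and 2 match the paper's starting point (the geometric realization of Proposition~\ref{prop::geometric-realization-convolution}, and the identification, for non-degenerate Laurent polynomials, of $F^\bullet_{\irr}$ with the Newton polyhedron filtration), but you defer the actual determination of the jumps to Step 3, and that is where the argument breaks. The logical direction is reversed: what Adolphson--Sperber and Wan provide is ``Frobenius Newton polygon above, resp.\ equal to, a \emph{combinatorially defined} Hodge polygon $\HP(\Delta(g)_{\dd})$''; there is no general theorem comparing the Frobenius Newton polygon with the \emph{irregular} Hodge polygon of the connection. Identifying $\HP(\Delta(g)_{\dd})$ with the irregular Hodge polygon is precisely what must be proved on the de Rham side, and in the paper this identification (Proposition~\ref{p:TwoHP}) is \emph{deduced from} the already-computed irregular Hodge numbers, not used to obtain them. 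The paper's own remarks show that the expected ``Newton above irregular Hodge'' can fail to be an equality (e.g.\ for characters of order dividing $p^s-1$ with $s>1$), so the principle cannot simply be invoked; and even granting it, ordinariness would only squeeze the two polygons together if one already knew the irregular Hodge polygon lies below the Adolphson--Sperber polygon --- which again requires the de Rham computation you are trying to avoid.

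There is also a case your strategy cannot reach at all: the theorem includes $n=m$ (Fedorov's regular-singular case), where ordinariness of hypergeometric sums is false --- the paper records the example $p=31$, $\alpha=(0,0,0,0)$, $\beta=(\tfrac15,\tfrac25,\tfrac35,\tfrac45)$, for which the Newton polygon lies strictly above the irregular Hodge polygon at suitable $a$ --- so no choice of prime makes your Step 3 work there. What the paper does instead is purely Hodge-theoretic and combinatorial: it produces the explicit basis $\omega_{r,\ell}$ (Proposition~\ref{p:coh basis}), computes its Newton-polytope weights to get the numbers $\theta(k)$ (Lemma~\ref{lem::Newton-weight}) --- which only yields the inclusion $G^\bullet\subset F^\bullet_{\irr}$ --- and then upgrades this inclusion to an equality by a duality of exponential mixed Hodge structures (Proposition~\ref{prop::EMHS-properties} combined with the symmetry $\theta(k)+\bar\theta(\sigma(k))=n+m-1$), before passing from fibers to the bundle. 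That equality step is also missing from your Step 2 as written: exhibiting a monomial basis with known weights only bounds the filtration from one side. In the paper the arithmetic statement (ordinariness for $n>m$ and characters of order dividing $p-1$) is an application of the Hodge computation, not an input to it.
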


\subsection{Application to Frobenius slopes of hypergeometric sums}

	Our method has an arithmetic application to the Frobenius slopes of hypergeometric sums, the arithmetic incarnation of hypergeometric connections \cite{katz1990exponential}. 

	Let $K$ be a $p$-adic field with residue field $k=\mathbb{F}_p$ containing an element $\pi$ satisfying $\pi^{p-1}=-p$. 
	Such an element $\pi$ corresponds to an additive character $\psi:k\to K^{\times}$ by Dwork's theory \cite{Dw74}. 
	Suppose that $(\alpha,\beta)$ is non-resonant and that $\alpha_i=\frac{a_i}{p-1},\beta_j=\frac{b_j}{p-1}$ are in $\frac{1}{p-1}\mathbb{Z}$.  
	Miyatani \cite{Miy} showed that there exists a unique Frobenius structure $\varphi$ (up to a scalar) on the analytification of hypergeometric connection $\Hyp_{(-1)^{m+np}/\pi^{n-m}}(\alpha;\beta)$ over $S_K$, which underlies an overconvergent $F$-isocrystal on $S_k$ (called the \textit{hypergeometric $F$-isocrystal}). 
	The Frobenius trace of $\varphi$ at a point $a\in S(\mathbb{F}_{q})$ is given by the \textit{hypergeometric sum} $\CHyp(\alpha;\beta)(a)$, defined by
	\begin{equation*}
	\sum_{\begin{subarray}{c} x_i, y_j\in \mathbb{F}_{q}^{\times}, \\ x_1\cdots x_n=ay_1\cdots y_m\end{subarray}} \psi\biggl( \Tr_{\mathbb{F}_{q}/k} \biggl(\sum_{i=1}^n x_i -\sum_{j=1}^m y_j\biggr)\biggr) \cdot 
		\prod_{i=1}^n \omega^{a_i}(\Nm_{\mathbb{F}_{q}/k}(x_i))
		\prod_{j=1}^m \omega^{-b_j}(\Nm_{\mathbb{F}_{q}/k}(y_j)),
	\end{equation*}
	where $\omega:\mathbb{F}_p^{\times} \to K^{\times}$ denotes the Teichmüller lift. 

	Frobenius eigenvalues of $\varphi$ at $a$ are Weil numbers and have complex absolute valuations $p^{\frac{n+m-1}{2}}$ via an isomorphism $\overline{K}\simeq \mathbb{C}$. 
	One intriguing question to explore is the investigation of their $p$-adic valuations, as there is an anticipated connection between these valuations and the irregular Hodge numbers of hypergeometric connections.
	Following \cite{Mazur}, we encode the information of $p$-adic valuations of Frobenius eigenvalues and irregular Hodge numbers into Newton polygon and Hodge polygon respectively. 
	Our construction allows us to show the following result. 
	
\begin{thm}[\ref{t:Frobslope}]\label{thm:NPHP-intro}
	Suppose $n>m$, $(\alpha,\beta)$ is non-resonant and $\alpha_i,\beta_j$ lie in $\frac{\mathbb{Z}}{p-1}\cap [0,1)$. 
	For every $a\in \Gm(\mathbb{F}_q)$, the Frobenius Newton polygon of $\CHyp(\alpha;\beta)(a)$ (normalized by $\ord_q$) coincides with the irregular Hodge polygon defined by the associated irregular Hodge numbers \eqref{eq:theta(k)}. 
\end{thm}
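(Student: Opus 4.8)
The plan is to transport the geometric construction of the irregular Hodge filtration to characteristic $p$ and then read off the Newton polygon from the resulting rigid cohomology. Concretely, Proposition~\ref{prop::geometric-realization} realizes $\Hyp_\lambda(\alpha;\beta)$, together with its filtration $F_{\irr}^\bullet$, inside an exponentially twisted de Rham cohomology of an explicit toric family $\{X_a\}_{a\in\Gm}$ — with additive twist by $\psi(f)$, $f=\sum x_i-\sum y_j$, and Kummer twists attached to $\alpha,\beta$ — in such a way that $F_{\irr}^\bullet$ is the filtration attached to the Newton polyhedron of $f$, which by \cref{thm:Hodge-fil} produces exactly the polygon built from the $\theta(k)$ of \eqref{eq:theta(k)}. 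The same family is defined over $\mathbb{Z}[1/p]$, and passing to rigid cohomology over $\mathbb{F}_q$ equips the stalk at $a$ of the hypergeometric $F$-isocrystal with a Frobenius whose Newton polygon is the Frobenius Newton polygon of $\CHyp(\alpha;\beta)(a)$. Thus it suffices to show that this rigid cohomology is \emph{ordinary} — Newton polygon equal to the Newton-polyhedron Hodge polygon — for \emph{every} $a\in\Gm(\mathbb{F}_q)$.

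Since $\alpha_i,\beta_j\in\tfrac{\mathbb{Z}}{p-1}$, the Kummer characters have order dividing $p-1$, and all of $\mu_{p-1}$ lies in $\mathbb{F}_q^\times$; one may therefore either work directly with Dwork's cohomological theory for multiplicatively twisted toric sums — in which the torus directions contribute Gauss sums — or pull back along the $(p-1)$-power map in each torus variable to reduce to a \emph{pure} toric exponential sum $\sum_{y}\psi(g(y))$ and extract the relevant $(\mu_{p-1})^{\bullet}$-isotypic piece, the group action being defined over $\mathbb{F}_q$ and commuting with Frobenius (so that the Newton polygon we want is simply the sub-polygon cut out by those slopes — a purely combinatorial bookkeeping). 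Granting this, the inequality $\NP\geq\HP$ for every $a$ comes for free from the $p$-adic Hodge-theoretic bound (Mazur's inequality applied to the Frobenius-stable Hodge filtration on the Dwork complex, equivalently the Adolphson–Sperber estimate for nondegenerate toric exponential sums, the relevant polyhedron being the near-simplicial Newton polyhedron of $f$).

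The heart of the argument, and the step I expect to be the main obstacle, is the reverse inequality $\NP\leq\HP$ \emph{uniformly in $a$}. I would compute the Frobenius directly on Dwork's model: in the monomial basis adapted to the Newton polyhedron, every entry of the Frobenius matrix is a $\pi$-adic series in which the parameter $a$ enters only through factors of the form $\omega(a)^{\ast}$, which are $\pi$-adic units; hence the $\ord$-profile of the matrix — and therefore the entire Newton polygon — does not depend on $a$. It then remains to bound the polygon at a single value of $a$, where the $\pi$-orders of the coefficients of Dwork's splitting function $\exp(\pi(t-t^p))$ combine with the Stickelberger/Gross–Koblitz valuations of the Gauss sums coming from summing out the torus directions to place the diagonal of the Frobenius matrix, after re-sorting, exactly at the multiset $\{\theta(k)\}_k$ (this is where the precise shape of \eqref{eq:theta(k)}, with its contributions $(n-m)\alpha_k$, $\#\{i:\beta_i<\alpha_k\}$ and $n-k$, should fall out of the Stickelberger bookkeeping), the off-diagonal entries having strictly larger $\ord$. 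The points that require genuine care are: ensuring nondegeneracy — hence vanishing of the cohomology outside the expected degree, so that no cancellation between cohomological degrees can lift the polygon — for \emph{every} $a\in\Gm(\mathbb{F}_q)$ rather than generically, and carrying the Stickelberger computation cleanly through the $(\mu_{p-1})^{\bullet}$-isotypic decomposition; the hypothesis $n>m$ is precisely what provides the clean ``one irregular point of slope $\le 1$'' geometry that makes the Newton polyhedron behave simplicially and the whole scheme go through.
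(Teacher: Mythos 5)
Your reduction of the problem — realize the sum as a nondegenerate twisted toric exponential sum attached to $f_a$, get $\NP\geq\HP$ from the Adolphson--Sperber bound, and identify that Hodge polygon with the irregular Hodge polygon via the monomial basis adapted to $\Delta(f_a)$ — matches the paper's frame (\cref{t:AS}, \cref{p:TwoHP}). The gap is in your argument for the reverse inequality. You claim that in a monomial basis the entries of the Dwork--Frobenius matrix involve $a$ only through $p$-adic units $\omega(a)^{\ast}$, and conclude that ``the $\ord$-profile of the matrix --- and therefore the entire Newton polygon --- does not depend on $a$.'' This does not follow: the Newton polygon is read off from the valuations of the coefficients of $\det(1-TA)$, i.e.\ from sums of products of entries, and cancellations in those sums genuinely depend on the value of $a$, not just on the valuations of individual entries (each of which is itself a $\pi$-adic series in the Teichm\"uller lift of $a$, so even entrywise valuations can jump). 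Indeed, everywhere-ordinariness versus generic ordinariness is exactly the delicate point of the theorem: the paper's introduction records examples in closely related settings (characters of order dividing $p^s-1$ with $s>1$, and the non-confluent case $n=m$ at $a=4,17$ for $p=31$) where the Newton polygon lies strictly above the Hodge polygon at special values of $a$. So the independence-of-$a$ step cannot be waved through, and the remaining ``single value of $a$'' Stickelberger/Gross--Koblitz computation, which you yourself flag as the main obstacle, is not carried out; even granting diagonal entries of the right valuations, ``off-diagonal entries of strictly larger $\ord$'' is not by itself sufficient to pin the char-polynomial valuations without a dominance condition on all the relevant minors.

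The paper avoids this entirely by a different mechanism for $\NP\leq\HP$: pull back by the $(p-1)$-power map to the untwisted sum for $\widetilde f_a(x)=f_a(x^{p-1})$, and prove its ordinariness by Wan's facial decomposition criterion --- ordinariness of $\widetilde f_a$ reduces to ordinariness of its restrictions to the codimension-one faces of $\Delta(\widetilde f_a)$ not through the origin, and each such restriction is a \emph{diagonal} Laurent polynomial whose ordinariness is a coefficient-independent (hence $a$-independent) combinatorial fact about the group $S(\delta_i)\simeq(\mathbb{Z}/(p-1)\mathbb{Z})^{n+m-1}$. Then the decomposition of the untwisted sum into the hypergeometric sums over all characters of order dividing $p-1$, combined with ``Newton above Hodge'' for each summand, forces equality for each isotypic piece; an induction on $n$ handles the resonant pieces occurring in that decomposition, and \cref{p:TwoHP} converts the Adolphson--Sperber Hodge polygon into the irregular Hodge polygon \eqref{eq:theta(k)}. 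If you want to salvage your route, you would need to replace the independence-of-$a$ claim by an argument of this type (or an explicit minor-by-minor dominance estimate valid for every $a$), since that is precisely where the hypothesis that the character orders divide $p-1$ enters.
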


For crystalline cohomology groups of a smooth proper variety over $k$, Mazur and Ogus showed that the associated (Frobenius) Newton polygon lies above the Hodge polygon defined by Hodge numbers \cite{Mazur,BO}. 
For $F$-crystals associated with exponential sums, ``Newton above Hodge'' results were studied by Dwork's school. 
Dwork, Sperber and Wan \cite{Dw74, Sp77, Wan93} proved that Kloosterman sums (hypergeometric sums of type $(n,0)$ with $\alpha=(0,\dots,0)$) are everywhere ordinary on $|\mathbb{G}_{m,\mathbb{F}_p}|$ (i.e. two polygons coincide for every closed point $a\in |\Gm|$). 
Our proof is based on the works of Adolphson--Sperber \cite{AS89, AS93} and a criterion for ordinariness due to Wan \cite{Wan93}. 

\begin{rem}
	(i) One may also consider the Frobenius Newton polygon of hypergeometric sums defined by multiplicative characters of orders dividing $p^s-1$ for a positive integer $s$, and we still expect that the associated Frobenius Newton polygon lies above the irregular Hodge polygon. 
	However, the associated hypergeometric sums may not be ordinary in the case $s>1$. 
	There is an example of hypergeometric sums (of type $(n,m)=(2,0)$), for which the Frobenius Newton polygon lies strictly above the irregular Hodge polygon for every $a\in |\mathbb{G}_{m,\mathbb{F}_p}|$ \cite{adolphson1987twisted}. 

	(ii) The ordinariness of hypergeometric sums fails in the non-confluent case (i.e., $n=m$) as well. 
	For $p=31$, and the hypergeometric sum defined by $\alpha=(0,0,0,0)$, $\beta=(\frac{1}{5},\frac{2}{5},\frac{3}{5},\frac{4}{5})$ at $a=4$ or $17$, 
	its Newton polygon (with slope $(\frac{5}{2},\frac{5}{2},\frac{9}{2},\frac{9}{2})$) \cite[Appendix A.5]{DK17}\footnote{In \textit{loc. cit}, the Frobenius slopes are normalized and are different from our convention by a shift of $2$.} strictly lies above the irregular Hodge polygon (with slope $(2,3,4,5)$). 
\end{rem}

\subsection{Strategy of proof}\label{s:idea}

The proof of Theorem~\ref{thm:Hodge-intro} can be reduced to calculating the irregular Hodge filtration on each fiber of $\Hyp_\lambda(\alpha,\beta)$. To achieve this, we adopt an approach similar to those used in \cite{fresan2018hodge,Sabbah23airy,Qin23Hodge}, where the authors calculated the Hodge numbers of motives attached to Kloosterman and Airy moments. The key ingredient of this argument is an (exponentially) geometric interpretation of hypergeometric connections in Proposition~\ref{label::geom-3}. More precisely, there exists a smooth quasi-projective variety $X$ with a regular function $g\colon X\times S\to \mathbb{A}^1$, such that the hypergeometric connections are subquotients of the $\mathscr{D}_S$-module $\mathcal{H}^{N}\pr_{+}(\mathcal{O}_{X\times S},\rr{d}+\rr{d}g)$, where $N=\dim X$  and $\pr$ is the projection $\pr\colon X\times S \to S$. 
Our construction is motivated by Katz's hypergeometric sums and the function-sheaf dictionary. A related construction can be found in \cite{KY21hypergeometric}.

Through this geometric interpretation, each fiber $\Hyp_\lambda(\alpha,\beta)_a$ at $a\in S(\mathbb{C})$ is identified with a subquotient of the twisted de Rham cohomology of the pair $(X,g_a:=g\mid_{\pr_z\inv(a)})$, i.e., the hypercohomology of the twisted de Rham complex $(\Omega^\bullet_{X},\rr{d}+\rr{d}g_a)$. Then we reduce to calculate the irregular Hodge filtration on the twisted de Rham cohomology of the pair $(X,g_a)$ (up to a shift).

The irregular Hodge filtration on the twisted de Rham cohomology of the pairs $(X,g_a)$ has been studied by Yu \cite{yu2012irregular}. 
In the context of our case, we can select $X=\mathbb{G}_m^{n+m-1}$ and $g_a$ as a Laurent polynomial with good properties, see Proposition~\ref{prop::geometric-realization-convolution}. 
Under these assumptions, Yu showed that the irregular Hodge filtration on $\mathrm{H}_{\rr{dR}}^{n+m-1}(X,g_a)$ can be calculated by the Newton polyhedron filtration on the Newton polytope $\Delta(g_a)$ \eqref{eq:newton-monomial}. 
This identification enables us to calculate via a combinatorial approach, leading to a fiber-wise version of Theorem \ref{thm:Hodge-intro} as follows:

\begin{thm}[\ref{thm::hodge-number}]\label{thm::hodge-intro2}
	Up to an $\bb{R}$-shift, the jumps of the irregular Hodge filtration $F^\bullet_{\rr{irr}}$ on the fiber $\Hyp(\alpha;\beta)_a$ occur at $\theta(k)$ from \eqref{eq:theta(k)} for $1\leq k\leq n$. Moreover, we have
		$\operatorname{dim}\mathrm{gr}^p_{F_{\irr}}\Hyp(\alpha;\beta)_a = \#\theta\inv(p)$ for any $p\in \bb{R}$.
\end{thm}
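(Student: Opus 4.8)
The plan is to combine the geometric interpretation of hypergeometric connections (Proposition~\ref{prop::geometric-realization-convolution}) with Yu's description of the irregular Hodge filtration via Newton polyhedra. First I would fix $a \in S(\mathbb{C})$ and write down explicitly the Laurent polynomial $g_a$ on $X = \mathbb{G}_m^{n+m-1}$ coming from the convolution realization; the coordinates should be indexed so that $g_a = \sum_{i=1}^{n} x_i + a \sum_{j=1}^{m} y_j / (\text{monomial})$ or a similar shape dictated by the constraint $x_1\cdots x_n = a\, y_1\cdots y_m$ used to eliminate one variable. The key point to verify at this stage is that $g_a$ is \emph{nondegenerate} with respect to its Newton polytope $\Delta(g_a)$ and \emph{convenient} (or at least satisfies the hypotheses under which Yu's theorem applies), so that $\mathrm{H}_{\mathrm{dR}}^{n+m-1}(X, g_a)$ is the only nonvanishing cohomology and its irregular Hodge filtration is computed by the Newton filtration: $\dim \mathrm{gr}^{p}_{F_{\mathrm{irr}}}$ equals the number of lattice points of $\mathbb{Z}^{n+m-1}$ (twisted by the parameters $\alpha,\beta$) lying in the appropriate face-degree stratum weighted by the polytope gauge function $\phi_{\Delta}$.

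Next I would carry out the combinatorial computation. Introduce the twisted Newton filtration: because $\alpha_i = a_i/(p-1)$-type shifts appear (here over $\mathbb{C}$, just real shifts), one counts monomials $x^u$ with $u$ in a shifted lattice, and the weight of $x^u$ is $\phi_{\Delta(g_a)}(u)$, the smallest $t \ge 0$ with $u \in t\cdot \Delta(g_a)$. The claim reduces to showing that the multiset $\{\phi_{\Delta(g_a)}(u) : u \in (\text{shifted lattice}) \cap \text{relevant cone}\}$, after the global $\mathbb{R}$-shift, is exactly the multiset $\{\theta(1), \ldots, \theta(n)\}$ from \eqref{eq:theta(k)}. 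I expect this to follow by decomposing $\Delta(g_a)$ into simplicial cones over its facets and evaluating $\phi_{\Delta}$ facet by facet; the contribution of each facet is an arithmetic-progression count governed by the $\alpha$'s and $\beta$'s, and summing these contributions — keeping careful track of the term $\#\{i \mid \beta_i < \alpha_k\}$ and the factor $(n-m)\alpha_k$ — should reproduce $\theta(k)$. It is at this point that the non-resonance hypothesis $\alpha_i \neq \beta_j$ enters: it guarantees that no lattice point sits on a "wrong" boundary, so the count is clean and the $n$ values $\theta(k)$ are produced without collision artifacts.

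Finally I would reconcile the normalizations. The geometric realization presents $\Hyp(\alpha;\beta)_a$ as a \emph{subquotient} of $\mathrm{H}_{\mathrm{dR}}^{n+m-1}(X,g_a)$, so I must check that passing to this subquotient (a) does not lose any graded piece — i.e. the subquotient accounts for all $n$ of the relevant classes, the complementary part being purely combinatorial cone-corrections that cancel — and (b) shifts the filtration index by a fixed real constant, which I fold into the stated "$\mathbb{R}$-shift." This compatibility, together with the strictness of the irregular Hodge filtration under the morphisms realizing the subquotient (guaranteed by the general theory in \cite{Sabbah18irregular, yu2012irregular}), yields $\dim \mathrm{gr}^{p}_{F_{\mathrm{irr}}}\Hyp(\alpha;\beta)_a = \#\theta^{-1}(p)$.

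The main obstacle I anticipate is the facet-by-facet combinatorial identity: verifying that Yu's Newton-polytope count collapses to precisely the closed-form expression \eqref{eq:theta(k)} — in particular matching the term $\#\{i \mid \beta_i < \alpha_k\}$, which records how the ordered $\beta$-exponents interleave with a given $\alpha_k$ — requires a careful bookkeeping of which facets of $\Delta(g_a)$ are "visible" from each shifted lattice point, and this is where the bulk of the genuine work lies. A secondary subtlety is confirming the hypotheses of Yu's theorem (nondegeneracy and the vanishing of $\mathrm{H}^{\neq n+m-1}$) for \emph{every} $a \in S(\mathbb{C})$, including the boundary-type behavior as $a$ varies, though this should already be packaged in Proposition~\ref{prop::geometric-realization-convolution}.
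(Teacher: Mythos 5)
Your first half matches the paper's strategy: reduce to the fiber, use the realization of $\Hyp(\alpha;\beta)_a$ as the $(G,\tilde\chi\times\rho)$-isotypic part of $\mathrm{H}^{n+m-1}_{\mathrm{dR}}(\mathbb{G}_m^{n+m-1},f_a)$, check non-degeneracy of $f_a$ (the paper's Lemmas on $\Delta(f_a)$), and invoke Yu's theorem identifying $F^\bullet_{\mathrm{irr}}$ with the Newton polyhedron filtration. One small correction: in the non-resonant case the fiber is the \emph{whole} isotypic component (via the forget-support isomorphism), not a proper subquotient, so your step (a)/(b) about strictness under subquotients is not where the difficulty lies; nor is non-resonance used to keep lattice points off ``wrong'' boundaries --- it is used for irreducibility, purity, and to get the cyclic-vector/basis statements.

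The genuine gap is in your combinatorial step. You propose that $\dim\mathrm{gr}^p_{F_{\mathrm{irr}}}$ equals a count of shifted lattice points of prescribed weight; but the cohomology is a quotient of the span of such monomials (the cone contains infinitely many lattice points of each weight class modulo the Jacobian-type ideal), so the reduction ``multiset of weights $=\{\theta(1),\dots,\theta(n)\}$'' is not well-posed until you (i) exhibit an explicit monomial basis of the $n$-dimensional isotypic component --- this is the paper's Proposition~\ref{p:coh basis}, proved by nontrivial Gauss--Manin manipulations (Lemma~\ref{lem::trick}) starting from the cyclic vector of Proposition~\ref{prop::geometric-realization}, not by a facet decomposition --- and (ii) show that the weights of these basis classes give the \emph{exact} jumps. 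Point (ii) is the real issue your proposal does not address: the Newton-filtration estimate \eqref{eq:newton-monomial} only yields the one-sided inclusion $\mathrm{span}\{\omega_{r,\ell}\mid n+m-1-w(\omega_{r,\ell})\ge p\}\subseteq F^p_{\mathrm{irr}}$, i.e.\ a basis element could a priori lie deeper in the filtration than its weight indicates (and the polytope is not convenient when $m\ge 1$, so one cannot simply quote a degeneration statement to read off graded dimensions from the graded ring). The paper closes this by a duality argument: the EMHS duality exchanging $(\alpha,\beta)$ with $(\bar\alpha,\bar\beta)$ and $f_a$ with $-f_a$ (Proposition~\ref{prop::EMHS-properties}), Yu's duality for the irregular Hodge filtration, and the combinatorial symmetry $\theta(k)+\bar\theta(\sigma(k))=n+m-1$ (Lemma~\ref{lem:dual-dimension-G}), which together force the dimensions to match and hence the auxiliary filtration to equal $F^\bullet_{\mathrm{irr}}$ (Lemma~\ref{l:two filtrations}). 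Your proposal contains no mechanism playing this role, so as written it proves at best a lower bound on the filtration, not the stated equality.
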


Moreover, our calculation allows us to answer a question of Katz \cite[6.3.8]{katz1990exponential} on the comparison between modified hypergeometric $\mathscr{D}$-modules and hypergeometric connections in the resonant case (see \cref{p:compareDmods}) when the parameters are rational.

\subsection{Organization of this article}
The article is organized as follows. In \cref{sec:hyp-conn}, we present a geometric interpretation of hypergeometric connections. 
\cref{sec::hodge-number} is devoted to the proof of \cref{thm::hodge-intro2}  and \cref{thm:Hodge-intro}. 
In \cref{sec:Frob-struc}, we study hypergeometric sums defined by multiplicative characters of orders dividing $p-1$ and prove that they are ordinary (\cref{thm:NPHP-intro}).

\section{Hypergeometric connections} \label{sec:hyp-conn}

In this section, we give an (exponentially) geometrical interpretation of the hypergeometric connections in Propositions~\ref{prop::geometric-realization-convolution} and~\ref{prop::geometric-realization}. We work with varieties over $\bb{C}$ in Sections~2 and 3.

\subsection{Review of hypergeometric connections}
We review properties of hypergeometric connections and of two modified hypergeometric $\scr{D}$-modules following \cite{katz1990exponential}.

\begin{secnumber} \textbf{Hypergeometric connections.}
	Let $n\geq m$ be two integers $\ge 0$, $\alpha=(\alpha_1,\ldots,\alpha_n)$ and $\beta=(\beta_1,\ldots,\beta_j)$ two sequences of non-decreasing rational numbers (and we don't require that they lie in $[0,1)$ as in \S~\ref{s:intro}), and $\lambda\in \mathbb{Q}$.
	Let $\mathscr{D}_S$ be the sheaf of differential operator on $S$ (\S~\ref{s:intro}). 
	Then, the hypergeometric connection $\Hyp_\lambda(\alpha;\beta)$ on $S$ is defined by \eqref{eq:Hyp-eq}
\begin{equation}\label{eq:Hyp-conn}
	\begin{split}
		\mathscr{D}_{S}/\rr{Hyp}_\lambda(\alpha;\beta).
	\end{split}
\end{equation} 

By \cite[(3.1)]{katz1990exponential}, one has for $\gamma\in \mathbb{Q}$ that
\begin{equation}\label{eq:reduce-to-alpha1=0}
		\begin{split}
			\Hyp_\lambda(\alpha;\beta)\otimes (\cc{O},\rr{d}+\gamma\tfrac{\rr{d}z}{z})\simeq \Hyp_\lambda(\alpha+\gamma;\beta+\gamma),
		\end{split}
	\end{equation}
where $\alpha+\gamma$ (resp. $\beta+\gamma$) is the sequence consisting of $\alpha_i+\gamma $ (resp. $\beta_j+\gamma)$. Furthermore, one has for $\mu\in \mathbb{Q}\cros$ that
\begin{equation}\label{eq:coeff-change}
	\begin{split}
		[x\mapsto \mu\cdot x]^+\Hyp_\lambda(\alpha;\beta)\simeq \Hyp_{\lambda/\mu}(\alpha;\beta).
	\end{split}
\end{equation}
Thanks to the above relations, we can often assume that $\lambda=1$ and $\alpha_1=0$. For simplicity, we denote by $\Hyp(\alpha;\beta)$ the connection $\Hyp_1(\alpha;\beta)$.

When the pair $(\alpha,\beta)$ is non-resonant, i.e., $\alpha_i-\beta_j\not\in \bb{Z}$ for any $i,j$, Katz showed in \cite[Prop.\,3.2]{katz1990exponential} that $\Hyp(\alpha;\beta)$ is irreducible, and only depends on $\alpha \mod \bb{Z}$ and $\beta\mod \bb{Z}$. In this case, we may assume that $\alpha$ and $\beta$ are two non-decreasing sequences of rational numbers in $[0,1)$.
\end{secnumber}

\begin{secnumber} \textbf{Modified hypergeometric \texorpdfstring{$\scr{D}$}{D}-modules.}
Given a morphism $g$ between smooth varieties, for bounded complex of holonomic algebraic $\mathscr{D}$-modules, following \cite[App.\,A.1]{fresan2018hodge}, we denote by $g^+$, $g_{+}$, and $g_{\dagger}$ the derived pullback functor, the pushforward functor, and the pushforward with compact support functor respectively.  The $k$-th cohomology of a complex $K$ is denoted by $\mathcal{H}^k(K)$.

Let $\mathrm{mult}\colon \bb{G}_m\times \bb{G}_m\to \bb{G}_m$ be the product map. The convolution functors $\star_*$ and $\star_!$ on $\mathbb{G}_m$ are defined, for two objects $M$ and $N$ of $\mathrm{D}^b(\scr{D}_{\bb{G}_m})$ by
	$$M\star_* N:= \mathrm{mult}_+(M\boxtimes N)
		\text{ and } 
	M\star_! N:= \mathrm{mult}_\dagger M\boxtimes  N $$
respectively. 
These convolution functors are associative and commutative. Moreover, the duality functor $\mathbb{D}$ interchanges $\star_!$ and $\star_*$.
\end{secnumber}
\begin{defn}
	Let $\alpha$ and $\beta$ be two sequences of rational numbers. For $?\in \{!,*\}$, the convolution
		$$\Hyp(\alpha_1;\emptyset)\star_?\cdots \star_?\Hyp(\alpha_n;\emptyset)\star_?\Hyp(\emptyset;\beta_1)\star_?\cdots \star_?\Hyp(\emptyset;\beta_m)$$
	is a holonomic $\mathscr{D}_{\mathbb{G}_m}$-module \cite[(6.3.6)]{katz1990exponential}.
	We denote it by $\Hyp(?;\alpha;\beta)$ and call it \textit{modified hypergeometric $\scr{D}$-module}.
\end{defn}

The above two modified hypergeometric $\scr{D}$-modules are not isomorphic to the hypergeometric connections in general. When ($\alpha,\beta$) is non-resonant, the natural map 
	\begin{equation}\label{eq:forget-support}
		\begin{split}
			\Hyp(!;\alpha;\beta)\to \Hyp(*;\alpha;\beta)
		\end{split}
	\end{equation}
is an isomorphism, as seen by using an argument similar to those in \cite[Thm.\,8.4.2(5)]{katz1990exponential} and \cite[Prop.\,3.3.3]{Miy}. 
In this case, both modified hypergeometric $\scr{D}_{\mathbb{G}_m}$-modules are isomorphic to the hypergeometric connection $\Hyp(\alpha;\beta)$ by \cite[(5.3.1)]{katz1990exponential}.

\subsection{The Newton polytope of a Laurent polynomial}
We study the Newton polytope of a Laurent polynomial appearing in the geometric interpretation of hypergeometric connections in Proposition~\ref{prop::geometric-realization}.

\begin{defn} \label{d:Delta} 
	Let $N$ be a positive integer and $g(z_1,\cdots,z_N)=\sum_{\tau\in \bb{Z}^N}c(\tau)z^\tau$ be a Laurent polynomial in variables $z_1,\ldots,z_N$, with $z^{\tau}=\prod_{i=1}^{N}z_i^{\tau_i}$ for $\tau=(\tau_1,\cdots,\tau_N)$.
	\begin{enumerate}[label=(\arabic*).]
		\item The support of $g$ is the subset $\mathrm{Supp}(g)=\{\tau\mid c(\tau)\neq 0\}$ of $\mathbb{Z}^{N}$.
		\item The \textit{Newton polytope} $\Delta(g)$ is the convex hull of the set $\mathrm{Supp}(g)\cup \{0\}$ in $\bb{R}^{N}$.
		\item The Laurent polynomial $g$ is called \textit{non-degenerate} with respect to $\Delta(g)$ (or simply non-degenerate) if for each face $\sigma \subset \Delta(g)$ not passing through $0$, the Laurent polynomial $ g_\sigma:=\sum_{\tau\in \sigma\cap \bb{Z}^N}c(\tau)z^{\tau}$ has no critical point in $(\bb{C}\cros)^N$. 
	\end{enumerate}
\end{defn}

Let $n\geq m\geq 0$ and $d\geq 1$ be three integers, and $f\colon \bb{G}_{m}^{n+m}\to \bb{A}^1$ the Laurent polynomial
	\begin{equation}\label{eq::Laurent-polynomial}
		f: (x_2,\ldots,x_n,y_1,\ldots,y_m,z)\mapsto \sum_{i=2}^n x_i^d- \sum_{j=1}^m y_j^d+z\cdot \frac{\prod_{j=1}^my_j^d}{\prod_{i=2}^nx_i^d},
	\end{equation}
and $\pr_z\colon \bb{G}_m^{n+m}\to \bb{G}_{m}$ the projection onto the $z$-coordinate. For $a\in \bb{C}\cros$, we set $f_a=f\mid_{\pr_z\inv(a)}$.

We denote by $\{u_i,v_j\}_{2\leq i\leq n, 1\leq j\leq m}$ the coordinates in $\bb{R}^{n+m-1}$, and identify a monomial $\prod_i x_i^{a_i}\cdot \prod_j y_j^{b_j}$ with a lattice point $(a_i,b_j)\in\bb{Z}^{n+m-1}\subset \bb{R}^{n+m-1}$.

\begin{lem}\label{lem::Newton-Polygon-ineq-m=0} Assume that $n>m=0$ and $a\in \bb{C}\cros$. 
	\begin{enumerate}[label=(\arabic*).]
		\item The Laurent polynomial $f_a$ is convenient, i.e., the origin is in the interior of $\Delta(f_a)$.
		\item The Newton polytope $\Delta(f_a)$ is defined by 
			\begin{equation}\label{eq:h_k-1}
			\quad h_{n+1}:=\sum_{i=2}^n u_i\leq d \quad
			\text{ and } \quad
				h_{i_0}:=\sum_{i=2}^nu_i - (n-m)u_{i_0}\leq d, \quad 2\leq i_0\leq n. 
			\end{equation}

		\item The Laurent polynomial $f_a$ is non-degenerate with respect to $\Delta(f_a)$.
	\end{enumerate}
\end{lem}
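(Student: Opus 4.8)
The plan is to analyze the Laurent polynomial $f_a(x_2,\dots,x_n) = \sum_{i=2}^n x_i^d + a\cdot\prod_{i=2}^n x_i^{-d}$ directly, since when $m=0$ the term $-\sum_j y_j^d$ disappears and the monomial $z\cdot\prod_j y_j^d/\prod_i x_i^d$ becomes $a\cdot\prod_{i=2}^n x_i^{-d}$. Its support consists of the $n-1$ lattice points $d\cdot e_i$ (for $2\le i\le n$, where $e_i$ is a standard basis vector of $\mathbb{R}^{n-1}$) together with the point $-d\cdot(1,\dots,1) = -d\sum_i e_i$. So $\Delta(f_a)$ is the convex hull of these $n$ points and the origin.

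For part (1), I would observe that the origin is a strictly positive convex combination of the $n$ support points: indeed $\frac{1}{n}\bigl(\sum_{i=2}^n d e_i\bigr) + \frac{1}{n}\bigl(-d\sum_i e_i\bigr) = 0$, with all $n$ coefficients equal to $1/n > 0$. Hence $0$ lies in the interior of the simplex spanned by the support points, so $f_a$ is convenient. For part (2), the polytope is an $(n-1)$-simplex with vertices $\{d e_i\}_{i=2}^n \cup \{-d\sum_i e_i\}$. I would identify its $n$ facets: the facet opposite the vertex $-d\sum e_i$ is the convex hull of $\{d e_i\}$, which lies in the hyperplane $\sum_i u_i = d$, giving $h_{n+1}\le d$; and the facet opposite the vertex $d e_{i_0}$ is the convex hull of $\{d e_i : i\ne i_0\}\cup\{-d\sum e_i\}$. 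A short linear-algebra computation (find the affine functional vanishing on those $n-1$ points and equal to $d$ at... wait, it should be checked which side contains the origin and the remaining vertex) shows this facet lies in $\{\sum_i u_i - n u_{i_0} = d\}$ — note that with $m=0$ the coefficient $n-m$ in \eqref{eq:h_k-1} is just $n$. One verifies the origin and all other vertices satisfy the corresponding $\le d$ inequality, so these are exactly the defining inequalities of $\Delta(f_a)$.

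For part (3), non-degeneracy, I would enumerate the faces $\sigma$ of $\Delta(f_a)$ not containing the origin and check each $f_\sigma = \sum_{\tau\in\sigma}c(\tau)z^\tau$ has no critical point in $(\mathbb{C}^\times)^{n-1}$. Because $\Delta(f_a)$ is a simplex whose vertices are in bijection with the $n$ monomials of $f_a$ (each monomial is a vertex), every proper face $\sigma$ is the convex hull of some subset of these vertices and $f_\sigma$ is simply the sum of the corresponding monomials. A face avoids the origin iff the corresponding monomial subset cannot have the origin in its convex hull, i.e. (given the computation in part (1)) iff it is a proper subset of the full vertex set or... more precisely, the only face containing $0$ is the whole polytope, so every proper face avoids $0$ and we must check all of them. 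The key computation: for a monomial $z^\tau$ with $\tau\ne 0$, the logarithmic derivative $z_k\partial_k(c z^\tau) = \tau_k c z^\tau$; setting all $z_k\partial_k f_\sigma = 0$ for $z\in(\mathbb{C}^\times)^{n-1}$ forces, for each $k$, a linear relation $\sum_{\tau\in\sigma} \tau_k c(\tau) z^\tau = 0$ among the (nonzero) monomial values, and since a subset of the vertices of a simplex is affinely independent, the vectors $\tau$ are affinely independent; I would use this to deduce the only solution would require some monomial value to vanish, a contradiction. Concretely: if $\sigma$ has vertices indexed by $I\subseteq\{2,\dots,n,\star\}$ (with $\star$ the vertex $-d\sum e_i$), then on $(\mathbb{C}^\times)^{n-1}$ the equations $\sum_{i\in I} \tau^{(i)}_k t_i = 0$ for all $k$, where $t_i = c^{(i)} z^{\tau^{(i)}}\ne 0$, say that $(t_i)_{i\in I}$ is a nonzero vector in the kernel of the matrix $[\tau^{(i)}_k]$; affine independence of the $\tau^{(i)}$ means this kernel meets the affine hyperplane $\sum t_i = \text{(value of }f_\sigma)$ only when... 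I would finish by checking the matrix $[\tau^{(i)}_k]_{k, i\in I}$ has rank $|I|$ when $\star\notin I$ (it is $d$ times a submatrix of the identity) and rank $|I|-1$ precisely when $\star\in I$ but then the kernel direction is $(1,1,\dots,1)$ which forces a further scalar equation with no solution — the latter being the genuinely interesting case.

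The main obstacle I anticipate is part (3), and specifically the faces $\sigma$ containing the vertex $\star = -d\sum e_i$. For a face $\sigma = \mathrm{conv}\{d e_i : i\in J\}\cup\{\star\}$ with $J\subsetneq\{2,\dots,n\}$, the critical-point equations reduce to $d x_i^d = d\cdot a\prod_{\ell} x_\ell^{-d}$ for $i\in J$ (from the $\partial_i$ equation for $i\in J$) and $-d\cdot a\prod_\ell x_\ell^{-d} = 0$ from the $\partial_i$ equation for $i\notin J$ — wait, for $i\notin J$ the monomial $x_i^d$ is absent so only the $\star$-term contributes, giving $-d a\prod_\ell x_\ell^{-d}=0$, impossible in $(\mathbb{C}^\times)^{n-1}$. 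That handles $J\subsetneq\{2,\dots,n\}$ immediately. The remaining case is $\sigma = \Delta(f_a)$ itself with $J = \{2,\dots,n\}$, but that face contains the origin, so it is excluded by hypothesis. Thus the only real work is the bookkeeping of which faces contain $0$, which part (1) already settled, and the argument is clean. I would also double-check the edge case $n=1$ (where $\mathbb{G}_m^{n+m}=\mathbb{G}_m^1$, $f_a = a x_1^{-d}$ has empty "$\sum_{i=2}^n$" part), but presumably the lemma is applied with $n\ge 2$.
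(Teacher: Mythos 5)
Your proposal is correct and takes essentially the same route as the paper: identify the $n$ support points, deduce interiority from $0=\tfrac{1}{n}\bigl(\sum_{i=2}^n de_i + R\bigr)$, obtain the defining inequalities by matching each facet with the vertex it omits, and verify non-degeneracy face by face by observing that for any proper face the logarithmic derivative in a missing index (or $dx_k^d$ when $R\notin\sigma$) never vanishes on the torus --- precisely the check the paper leaves as ``we can check that they are smooth''. (Your passing claim that the exponent matrix has rank $|I|-1$ whenever the vertex $R$ lies in the face is inaccurate --- the rank drops only for the full vertex set, i.e.\ $\sigma=\Delta(f_a)$, which contains $0$ and is excluded --- but your final concrete case analysis supersedes that remark and is what actually carries part (3).)
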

\begin{proof}
	(1) Let $P_i$ for $2\leq i\leq n$, and $R$ be the points in $\mathbb{Z}^{n-1}$ corresponding to $x_i^{d}$ and $1/\prod x_i^d$ respectively. Observe that $0$ is an interior point of the Newton polytope because $0=\frac{1}{n}(\sum P_i+R)$. 
	
	(2) A face $\sigma\subset \Delta(f_a)$ of dimension $n-2$ must pass through $n-1$ points among $\{P_i,R\}$. So either $R\not\in \sigma$ or there exists a $P_{i_0}\not\in \sigma$. In the first case, the face lies on the hyperplane defined by the equation $h_{n+1}=d$. In the latter case, the face lies on the hyperplane defined by the equations $h_{i_0}=d$.

	(3) Let $\sigma$ be a face which does not pass through $0$. Since the support of $f_a$ has $n$ points, it must pass through at most $n-1$ points in $\mathrm{Supp}(f_a)$. Let $I\subset \{2,\ldots,n\}$ be a subset of the indices. Then $f_{a,\sigma}$ is either 
	$$f_{a,\sigma}=\sum_{i\in I} x_i^d, \quad \textnormal{or}\quad 
	f_{a,\sigma}=\sum_{i\in I} x_i^d+\frac{a}{\prod_{i=2}^n x_i^d}, \quad |I|\leq n-2. $$
	 We can check that they are smooth on $\bb{G}_m^{n-1}$. So $f_a$ is non-degenerate.
\end{proof}

\begin{lem}\label{lem::Newton-Polygon-ineq-m-neq-0} Assume that $n>m\neq 0$ and $a\in \bb{C}\cros$.
	\begin{enumerate}[label=(\arabic*).]
		\item The cone $\bb{R}_{\geq 0}\cdot \Delta(f_a)$ is defined by 
			$$u_i+v_j\geq 0,\quad v_j\geq 0$$
		for $i=2,\ldots,n$ and $j=1,\ldots,m$,
		\item The Newton polytope $\Delta(f_a)$ is defined by 
			$$u_i+v_j\geq 0,\quad v_j\geq 0,\quad h_{n+1}:=\sum u_i+\sum v_j\leq d$$
		and 
			\begin{equation}\label{eq:h_k-2} 
				h_{i_0}:=\sum_{i}u_i +\sum_{j}v_j - (n-m)u_{i_0}\leq d, \quad 2\leq i_0\leq n. 
			\end{equation}

		\item The Laurent polynomial $f_a$ is non-degenerate with respect to $\Delta(f_a)$ \footnote{
	In \cite[Lem.\,3.6]{Betti-hypergeometric-}, there is an alternative way of proving that $f_a$ is non-degenerate in this setting. }.
	\end{enumerate}
\end{lem}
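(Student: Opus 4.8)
The plan is to imitate the proof of \cref{lem::Newton-Polygon-ineq-m=0}, now tracking the three types of vertices. Put $N=n+m-1$ and let $P_i$ (for $2\le i\le n$), $Q_j$ (for $1\le j\le m$) and $R$ be the lattice points attached to the monomials $x_i^d$, $y_j^d$ and $\prod_j y_j^d/\prod_i x_i^d$; thus $P_i=d\,e_{u_i}$, $Q_j=d\,e_{v_j}$, $R=d\bigl(\sum_j e_{v_j}-\sum_i e_{u_i}\bigr)$, and $\mathrm{Supp}(f_a)=\{P_i,Q_j,R\}$ for every $a\in\bb{C}\cros$. The first thing I would record is that, since $\{P_i,Q_j\}$ is $d$ times a basis of $\bb{R}^N$, the space of linear relations among the $N+1$ points $P_i,Q_j,R$ is spanned by the single relation $R+\sum_{i=2}^n P_i=\sum_{j=1}^m Q_j$, whose coefficients sum to $1+(n-1)-m=n-m\neq 0$; hence there is no nonzero affine relation among $P_i,Q_j,R$, i.e.\ these $N+1$ points are affinely independent and affinely span $\bb{R}^N$. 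This is the structural input that makes part (3) tractable.

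For part (1), $\bb{R}_{\ge0}\cdot\Delta(f_a)$ is the conical hull of $\{P_i,Q_j,R\}$, and writing $(u,v)=\sum_i a_iP_i+\sum_j b_jQ_j+cR$ with $a_i,b_j,c\ge0$ forces $a_i=u_i/d+c$ and $b_j=v_j/d-c$; such a $c$ exists iff $\max\bigl(0,\max_i(-u_i/d)\bigr)\le c\le\min_j(v_j/d)$, which happens iff $v_j\ge0$ for all $j$ and $u_i+v_j\ge0$ for all $i,j$. (This is where $m\ge1$ enters, so that $\min_j$ is over a nonempty set; for $m=0$ the polytope is genuinely different, see \cref{lem::Newton-Polygon-ineq-m=0}.) In particular this cone is full-dimensional and pointed, so $\Delta(f_a)$ is $N$-dimensional with $0$ a vertex. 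For part (2), the forward inclusion is the routine check that $0$ and each of $P_i,Q_j,R$ satisfy the listed inequalities (using $n\ge m$), whence so does their convex hull. For the reverse inclusion, take $(u,v)$ satisfying all the inequalities; by part (1) it lies in the cone, so write it as above with $c$ chosen minimal. A short computation gives $\sum_i a_i+\sum_j b_j+c=\tfrac1d h_{n+1}+(n-m)c$, which equals $\tfrac1d h_{n+1}\le 1$ when $c=0$ and equals $\tfrac1d h_{i_0}\le 1$ when $c=-u_{i_0}/d$ for the maximizing index $i_0$; together with the leftover weight $\ge 0$ on the vertex $0$, this exhibits $(u,v)$ as a convex combination of $\{0\}\cup\mathrm{Supp}(f_a)$, so $(u,v)\in\Delta(f_a)$.

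For part (3), let $\sigma\subset\Delta(f_a)$ be a nonempty face with $0\notin\sigma$. It is a proper face, hence contained in a hyperplane, and since $\{P_i,Q_j,R\}$ affinely spans $\bb{R}^N$, the face $\sigma$ must omit at least one of these points. If $R\notin\sigma$, then $f_{a,\sigma}$ is a nonempty subsum of $\sum_i x_i^d-\sum_j y_j^d$, and its partial derivative in any variable that occurs is a nonzero monomial, hence nowhere vanishing on $(\bb{C}\cros)^N$; so $f_{a,\sigma}$ has no critical point there. If $R\in\sigma$ but $P_{i_0}\notin\sigma$ for some $i_0$, then the $R$-monomial is the only term of $f_{a,\sigma}$ involving $x_{i_0}$, so $\partial f_{a,\sigma}/\partial x_{i_0}$ is a nonzero scalar multiple of $x_{i_0}^{-1}\cdot(\text{$R$-monomial})$, which never vanishes on $(\bb{C}\cros)^N$; symmetrically, one uses $\partial f_{a,\sigma}/\partial y_{j_0}$ if some $Q_{j_0}\notin\sigma$. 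In every case $f_{a,\sigma}$ has no critical point in the torus, so $f_a$ is non-degenerate. The hard part is exactly this (3): the real content is not the derivative bookkeeping but the observation, valid precisely because $n\neq m$, that the $n+m$ support points of $f_a$ are affinely independent --- which is what lets one avoid enumerating the faces of $\Delta(f_a)$ and instead classify each face avoiding $0$ by which of $P_i,Q_j,R$ it omits. Parts (1) and (2) are then the same linear-programming computation already carried out in the case $m=0$.
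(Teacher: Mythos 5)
Your proof is correct, and for parts (1)--(2) it takes a genuinely different route from the paper. The paper derives the inequality descriptions by enumerating the codimension-one faces of the cone and of $\Delta(f_a)$ according to which $n+m-1$ points among $\{P_i,Q_j,R\}$ they contain, ruling out the impossible configurations by side-of-hyperplane checks; you instead compute directly, in coordinates, that a point is a nonnegative (resp.\ convex, after the weight bookkeeping $\sum_i a_i+\sum_j b_j+c=\tfrac1d h_{n+1}+(n-m)c$ with $c$ minimal) combination of $\{0,P_i,Q_j,R\}$ exactly when the listed inequalities hold. Your LP-style argument is self-contained and avoids the face classification; the paper's facet enumeration has the side benefit of identifying exactly which hyperplanes support which faces, information it reuses implicitly in later weight computations (e.g.\ Lemma~\ref{lem::Newton-weight}), but both correctly establish the stated $H$-description. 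For part (3) the two arguments are essentially the same (exhibit a partial derivative of $f_{a,\sigma}$ that is a nonzero monomial on the torus, according to whether $R$, some $P_{i_0}$, or some $Q_{j_0}$ is omitted); your explicit observation that the $n+m$ support points are affinely independent precisely because $n\neq m$ is a welcome clarification, since it is exactly what justifies the paper's terse claim that a face avoiding $0$ can pass through at most $n+m-1$ points of $\mathrm{Supp}(f_a)$.
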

\begin{proof}
	Let $P_i$ and $Q_j$ be the points in $\mathbb{Z}^{n+m-1}$ corresponding to monomials $x_i^d$ and $y_j^d$  for $2\leq i\leq n$ and $1\leq j\leq m$ respectively, and $R$ the lattice point corresponding to  $\prod_{j=1}^m y_j^d/\prod_{i=2}^n x_i^d$. In this case, the origin $0$ is not an interior point of the Newton polytope. So $\Delta(f_a)$ has $(n+m+1)$-many vertices. To determine a face of dimension $n+m-2$, we need to choose $(n+m-1)$-many points among $\{P_i,Q_j,R\}$.

	(1) For the first part, it suffices to determine faces $\sigma\subset \Delta(f_a)$ with dimensions $n+m-2$ containing $0$. 
	\begin{itemize}
		\item If $\sigma$ does not pass through $R$, it contain $(n+m-2)$ distinct points in $\{P_i,Q_j\}$. In this case, $\sigma$ misses one point $Q_{j_0}$, and lies on the hyperplane $v_{j_0}=0$. Otherwise, $\sigma$ misses one point $P_{i_0}$. Hence, the hyperplane is given by the equation $u_{i_0}=0$. Therefore, $R$ and $P_{i_0}$ lie on the two sides of the hyperplane respectively, which is absurd.
		\item If $\sigma$ passes through $R$, it contain $(n+m-3)$ distinct points in $\{P_i,Q_j\}$. In this case, $\sigma$ has to miss one $P_{i_0}$ and one $Q_{j_0}$, and lies on the hyperplane $u_{i_0}+v_{j_0}=0$.  Otherwise, $\sigma$ misses two $P_{i_0}, P_{i'_0}$ or $Q_{j_0},Q_{j_0'}$. So $\sigma$ lies on the hyperplane $u_{i_0}-u_{i_0'}=0$ or $v_{j_0}-v_{j_0'}=0$. However, the points $P_{i_0}, P_{i'_0}$ or $Q_{j_0},Q_{j_0'}$ lie be on different sides of the hyperplane $u_{i_0}-u_{i_0'}=0$ or $v_{j_0}-v_{j_0'}=0$, which contradicts the definition of $\sigma$.
	\end{itemize}

	(2) For the second part, it suffices to determine faces of dimension $n+m-2$ that do not pass through the origin.	 
	\begin{itemize}
		\item If $R\not\in \sigma$, then $\sigma$ contains all points $P_i$ and $Q_j$. In this case, $\sigma$ lies on the hyperplane $\sum u_i+\sum v_j=d$.
		\item If $R\in \sigma$, then $\sigma$ contains $n+m-2$ points among $\{P_i,Q_j\}$. In this case, $\sigma$ misses one $P_{i_0}$, and lies on the hyperplane $h_{i_0}=d$. Otherwise, it misses one $Q_{j_0}$ and lies on the hyperplane $\sum_{i=2}^n u_i+\sum_{j=1}^m v_j +(n-m)v_{j_0}=d$. However, the points $0$ and $Q_{j_0}$ are on different sides of the hyperplane.
	\end{itemize}
\begin{center}
\tdplotsetmaincoords{80}{100}
\begin{tikzpicture}[tdplot_main_coords]
 
	\draw[thick] (2,0,0) -- (0,2,0) node[anchor=south]{$P_3$};
	\draw[thick] (0,2,0) -- (0,0,2) node[anchor=south]{$Q_1$};
	\draw[thick] (0,0,2) -- (2,0,0) node[anchor=south]{$P_2$};
	\draw[thick] (2,0,0) -- (-2,-2,2) node[anchor=south]{$R$};
	\draw[dashed] (0,2,0) -- (-2,-2,2) node[anchor=south]{};
	\draw[thick] (0,0,2) -- (-2,-2,2) node[anchor=south]{};
	\draw[dashed] (-2,-2,2)--(0,0,0) node[anchor=south]{$O$};
	\draw[dashed] (2,0,0)--(0,0,0) node[anchor=south]{};
	\draw[dashed] (0,2,0)--(0,0,0) node[anchor=south]{};
\end{tikzpicture}
\end{center}

	(3) Let $\sigma$ be a face which does not pass through $0$. Since the support of $f_a$ has $n+m$ points, it must pass through at most $n+m-1$ points in $\mathrm{Supp}(f_a)$. Let $I\subset \{2,\ldots,n\}$ and $J\subset \{1,\ldots,m\}$ be two subsets of the indices. Then $f_{a,\sigma}$ is either 
	$$f_{a,\sigma}=\sum_{i\in I} x_i^d-\sum_{j\in J} y_j^d, \quad
	\textnormal{or} \quad f_{a,\sigma}=\sum_{i\in I} x_i^d-\sum_{j\in J} y_j^d+\frac{a\prod y_j^d}{\prod x_i^d}, \quad \textnormal{for}~ |I|+|J|\leq n+m-2. $$
	We can check that they are smooth on $\bb{G}_m^{n+m-1}$. So $f_a$ is non-degenerate.
\end{proof}

\begin{lem}\label{lem::Newton-Polygon-ineq-n=m} Assume that $n=m$ and $a\in \bb{C}\cros$.
	\begin{enumerate}[label=(\arabic*).]
		\item The cone $\bb{R}_{\geq 0}\cdot \Delta(f_a)$ is defined by 
			$$u_i+v_j\geq 0,\quad v_j\geq 0$$
		for $i=2,\ldots,n$ and $j=1,\ldots,m$,
	\item The Newton polytope $\Delta(f_a)$ is defined by  
			\begin{equation}\label{eq:h_k-3}
				u_i+v_j\geq 0,\quad v_j\geq 0, \quad \text{ and } \quad h_{n+1}:=\sum u_i+\sum v_j\leq d.
			\end{equation}

		\item The Laurent polynomial $f_a$ is non-degenerate with respect to $\Delta(f_a)$ if $a\neq 1$.
	\end{enumerate}
\end{lem}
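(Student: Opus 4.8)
The plan is to carry out the same kind of facet analysis as in the proofs of Lemmas~\ref{lem::Newton-Polygon-ineq-m=0} and~\ref{lem::Newton-Polygon-ineq-m-neq-0}, exploiting a simplification peculiar to the balanced case $n=m$. Keep the notation of those proofs: let $P_i$ ($2\le i\le n$), $Q_j$ ($1\le j\le m$), $R$ be the lattice points of the monomials $x_i^d$, $y_j^d$, $\prod_j y_j^d/\prod_i x_i^d$, and let $\rr{e}_{u_i},\rr{e}_{v_j}$ be the standard basis vectors of $\bb{R}^{n+m-1}$, so that $P_i=d\,\rr{e}_{u_i}$, $Q_j=d\,\rr{e}_{v_j}$ and $\tfrac1dR=\sum_j\rr{e}_{v_j}-\sum_i\rr{e}_{u_i}$. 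The crucial observation is that, when $n=m$, \emph{every} monomial of $f_a$ lies on the affine hyperplane $H:=\{h_{n+1}=d\}$: indeed $h_{n+1}(P_i)=h_{n+1}(Q_j)=d$, and $h_{n+1}(R)=-(n-1)d+md=d$ exactly because $n=m$, whereas $h_{n+1}(0)=0<d$. Hence $\Delta(f_a)=\mathrm{conv}\bigl(\{0\}\cup\{P_i,Q_j,R\}\bigr)$ is the pyramid with apex $0$ over the facet $B:=\mathrm{conv}\{P_i,Q_j,R\}\subset H$, and its cone $C:=\bb{R}_{\ge 0}\cdot\Delta(f_a)$ equals $\bb{R}_{\ge 0}\{\rr{e}_{u_i},\rr{e}_{v_j},\tfrac1dR\}$.

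For (1) I would verify directly that $C=\{(u,v)\mid v_j\ge 0\ \text{for all}\ j,\ u_i+v_j\ge 0\ \text{for all}\ i,j\}$: each generator manifestly satisfies these inequalities, and conversely, for $(u,v)$ in the right-hand set, setting $c:=\max\{0,\max_i(-u_i)\}$ one gets $0\le c\le\min_j v_j$, so $a_i:=u_i+c\ge0$, $b_j:=v_j-c\ge0$, and a coordinate check gives $(u,v)=\sum_i a_i\rr{e}_{u_i}+\sum_j b_j\rr{e}_{v_j}+c\cdot\tfrac1dR\in C$. For (2), the pyramid shape yields $\Delta(f_a)=\{tb\mid t\in[0,1],\,b\in B\}$; since $h_{n+1}\ge0$ on $C$ (all generators have $h_{n+1}=d>0$) and $B=C\cap H$, this set equals $C\cap\{h_{n+1}\le d\}$, i.e.\ the inequality system~\eqref{eq:h_k-3}.

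For (3), since $\Delta(f_a)$ is a pyramid with apex $0$, its faces $\sigma$ not passing through $0$ are exactly the faces of $B$, each the convex hull of a subset of $\{P_i,Q_j,R\}$. Set $T:=\prod_j y_j^d/\prod_i x_i^d$. If $\sigma\ne B$ and $R\notin\sigma$, then $f_{a,\sigma}=\sum_{i\in I}x_i^d-\sum_{j\in J}y_j^d$ with $I\cup J\ne\emptyset$, and a partial derivative in any variable that occurs equals $\pm dx^{d-1}$, nonvanishing on $\bb{G}_m^{n+m-1}$. If $\sigma\ne B$ and $R\in\sigma$, then $\sigma$ omits some vertex $P_{i_0}$ or $Q_{j_0}$, and the monomial $aT$ is the only term of $f_{a,\sigma}$ involving $x_{i_0}$ (resp.\ $y_{j_0}$), so $\partial_{x_{i_0}}f_{a,\sigma}=-\tfrac{ad}{x_{i_0}}T$ (resp.\ $\partial_{y_{j_0}}f_{a,\sigma}=\tfrac{ad}{y_{j_0}}T$) never vanishes on the torus because $a\ne0$. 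The remaining face is $\sigma=B$, for which $f_{a,B}=f_a$ since all monomials lie on $H$, and this is the only place where $a\ne1$ is used: from $\partial_{x_i}f_a=\tfrac{d}{x_i}(x_i^d-aT)$ and $\partial_{y_j}f_a=\tfrac{d}{y_j}(aT-y_j^d)$, a critical point in $\bb{G}_m^{n+m-1}$ would force $x_i^d=aT=y_j^d$ for all $i,j$, whence $T=\prod_j y_j^d/\prod_i x_i^d=(aT)^{m-n+1}=aT$ because $n=m$; as $T$ is a unit on the torus this forces $a=1$, a contradiction. Therefore $f_a$ is non-degenerate when $a\ne1$. The only substantive step is this last one-line identity $T=aT$, which is exactly where both the balancedness $n=m$ and the exclusion $a\ne1$ enter; the rest is a routine adaptation of the two preceding lemmas.
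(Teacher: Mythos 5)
Your proposal is correct and follows essentially the same route as the paper: the key point in both is that for $n=m$ all exponent vectors lie on the hyperplane $h_{n+1}=d$, so $\Delta(f_a)$ is a pyramid over the face $B$ through $\{P_i,Q_j,R\}$, every face avoiding the origin is a face of $B$, and the critical-point computation on $B$ (where $x_i^d=aT=y_j^d$ forces $T=aT$, hence $a=1$) is exactly where $a\neq 1$ enters. You merely make explicit the steps the paper leaves as "one can check" and verify the cone description in (1) by a direct generators-versus-inequalities argument instead of citing the facet analysis of Lemma~\ref{lem::Newton-Polygon-ineq-m-neq-0}, which is a harmless variation.
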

\begin{proof}
	We use the same notation as in Lemma~\ref{lem::Newton-Polygon-ineq-m-neq-0}. The proof of the first assertion is the same as that in Lemma~\ref{lem::Newton-Polygon-ineq-m-neq-0}. The second assertion follows from the observation that the points $\{P_i,Q_j,R\}$ all lie on the hyperplane $\sum u_i+\sum v_j-d=0$. 
	
	Let $\sigma$ be the face passing through $\{P_i,Q_j,R\}$. If a face $\tau$ of $\Delta(f_a)$ does not contain $0$, it is a face of $\sigma$. One can check that if $\tau$ is a proper face of $\sigma$, there is no solution for the system of equations
		$$f_{a,\tau}=\partial_{x_i}f_{a,\tau} =\partial_{y_j}f_{a,\tau}=0.$$
	If $\tau=\sigma$, the system of equations
		$$f_a=\partial_{x_i}f_{a} =\partial_{y_j}f_{a}=0$$
	has solutions in $\bb{G}_m^{n+m-1}$ if and only if $a=1$. So $f_a$ is non-degenerate with respect to $\Delta(f_a)$ if $a\neq 1$.
\end{proof}
\begin{rem}
	The volume of $\Delta(f_a)$ is $\frac{d^{n+m-1}n}{(n+m-1)!}$. In fact, the Newton polytope can be decomposed into $n$-copies $n+m-1$-simplexes, and each of them has volume $\frac{d^{n+m-1}}{(n+m-1)!}$. 
\end{rem}

\subsection{Geometric interpretations}\label{sec:geom-intpn}
We present geometric interpretations of hypergeometric connections here. Let $d$ be a common denominator of $\alpha_i$ and $\beta_j$, and set $a_i=d\cdot \alpha_i$ and $b_j=d\cdot \beta_j$. To $\alpha_i$ (resp. $\beta_j$), we associate the character $\chi_i\colon \mu_d\to \bb{C}\cros$ (resp. $\rho_j$) which sends $\zeta_d$ to $\zeta^{a_i}_d$ (resp. $\zeta^{b_j}_d$). Set

	\begin{equation}\label{eq:char}
		\begin{split}
			\chi\times \rho=\chi_1\times \ldots\times \chi_n\times \rho_1\inv\times \ldots\times \rho_m\inv, \quad
			\tilde \chi\times \rho=\chi_2\times \ldots\times \chi_n\times \rho_1\inv \times \ldots\times \rho_m\inv
		\end{split}
	\end{equation}
as products of these characters.

Now we introduce two diagrams as follows:

\begin{itemize}
	\item Let $\bb{G}_m^{n+m}$ be the torus with coordinates $x_i, y_j$ for $1\leq i\leq n$ and $1\leq j\leq m$. The group $\mu_d^{n+m}$ acts on $\bb{G}_m^{n+m}$ by multiplication $d$-th roots of unity on coordinates $x_i$'s and $y_j$'s.  Then we consider the diagram  
		\begin{equation} \label{eq:diagram-Hyp-isotypic}
		\begin{tikzcd}
			& \bb{G}_{m}^{n+m}\ar[ld,"\sigma"']\ar[rd,"\varpi"] &	\\
			\bb{A}^1 &		& \bb{G}_{m}
		\end{tikzcd}
	\end{equation}
	where $\sigma(x_i,y_j)=\sum_{i=1}^n x_i^d-\sum_{j=1}^my_j^d$, and $\varpi(x_i,y_j)=\prod_{i=1}^n x_i^d/\prod_{j=1}^m y_j^d.$ 

 \item Let $\bb{G}_m^{n+m}$ be the torus with coordinates $z,x_i,y_j$ for $2\leq i\leq n$ and $1\leq j\leq m$, and $S$ be $\bb{G}_m$ (resp. $\bb{G}_m\backslash\{1\}$) if $n\neq m$ (resp. $n=m$). 
 The group $G=\mu_d^{n+m-1}$ acts on coordinates $x_i$'s and $y_j$'s by multiplication $d$-th roots of unity. Then we consider the diagram  
	 \begin{equation}\label{eq:diagram-Hyp-mot}
		\begin{tikzcd}
		 	& \bb{G}_{m}^{n+m}\ar[ld,"f"']\ar[rd,"\pr_z"] &U:=S\times \bb{G}_m^{n+m-1} \ar[l,hook']	\ar[rd,"\pr_z"]&\\
		 	\bb{A}^1 &		& \bb{G}_{m} & S\ar[l,hook']
	 	\end{tikzcd}
	\end{equation}
	where $\pr_z$ is the projection on the $z$-coordinate and $f$ is defined in \eqref{eq::Laurent-polynomial}.
\end{itemize}

Let $\mathcal{E}^{z}=(\mathcal{O},\mathrm{d} + \mathrm{d}z)$ be the exponential $\scr{D}$-module on $\bb{A}^1_z$. For a regular function $f\colon X\to \bb{A}^1_z$, we denote by $\mathcal{E}^f$ the connection $(\cc{O}_X,\rr{d}+\rr{d}f)$ on $X$.

\begin{prop}\label{prop::geometric-realization-convolution}
	Let $\alpha$ and $\beta$ be as above.
	\begin{enumerate}[label=(\arabic*).,ref=\ref{prop::geometric-realization-convolution}.(\arabic*)]
		\item \label{label::geom-1} The complex $\varpi_{?}\mathcal{E}^{\sigma}$ is concentrated in degree $0$ for $?\in \{\dagger,+\}$, and we have isomorphisms of $\mathscr{D}$-modules
			$$\Hyp(*;\alpha;\beta)\simeq (\varpi_{+}\mathcal{E}^{\sigma})^{(\mu_d^{n+m}, \chi\times \rho)}\ 
			\text{ and } \ 
			\Hyp(!;\alpha;\beta)\simeq (\varpi_{\dagger}\mathcal{E}^\sigma)^{(\mu_d^{n+m}, \chi\times \rho)},$$
		where the exponent $(\mu_d^{n+m}, \chi\times \rho)$ means taking the $\chi\times \rho$-isotypic component with respect to the action of $\mu_d^{n+m}$. 
		
	\item \label{label::geom-3}	If $\alpha_1=0$, we have 
		$$\Hyp(*;\alpha;\beta)\simeq( \cc{H}^0\pr_{z+}\mathcal{E}^{f})^{( G,\tilde\chi\times\rho)}\ 
		\text{ and } \ 
		\Hyp(!;\alpha;\beta)\simeq(\cc{H}^0\pr_{z\dagger}\mathcal{E}^{f})^{( G,\tilde\chi\times\rho)}.$$
	\end{enumerate} 
\end{prop}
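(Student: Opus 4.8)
The plan is to deduce \ref{label::geom-1} from the description of the modified hypergeometric $\mathscr{D}$-modules as iterated convolutions of rank one objects, using a projection formula along the $d$-th power map to connect diagram \eqref{eq:diagram-Hyp-isotypic} with those convolutions, and then to obtain \ref{label::geom-3} from \ref{label::geom-1} by a change of variables available precisely because $\alpha_1=0$. For \ref{label::geom-1}, write $[d]\colon\Gm\to\Gm$ for $x\mapsto x^{d}$ and $[d]^{n+m}$ for its $(n+m)$-fold product, so that $\varpi=\bar\varpi\circ[d]^{n+m}$ and $\sigma=\bar\sigma\circ[d]^{n+m}$ with $\bar\varpi(x_i,y_j)=\prod_i x_i/\prod_j y_j$ and $\bar\sigma(x_i,y_j)=\sum_i x_i-\sum_j y_j$; in particular $\mathcal{E}^{\sigma}\simeq([d]^{n+m})^{+}\mathcal{E}^{\bar\sigma}$. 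Since $[d]$ is finite étale, $[d]_{\dagger}=[d]_{+}$ and $[d]_{+}\mathcal{O}_{\Gm}\simeq\bigoplus_{\chi}\mathcal{K}_{\chi}$, the sum over the characters $\chi$ of $\mu_d$ of the Kummer connections $\mathcal{K}_{\chi}=(\mathcal{O},\rr{d}+\tfrac{a}{d}\tfrac{\rr{d}z}{z})$, $\chi(\zeta_d)=\zeta_d^{a}$. By the projection formula and Künneth, $\varpi_{?}\mathcal{E}^{\sigma}$ is then the direct sum, over tuples of characters $(\chi_i,\eta_j)$ of $\mu_d^{n+m}$, of $\bar\varpi_{?}$ applied to the external tensor product of the rank one connections $\mathcal{E}^{x_i}\otimes\mathcal{K}_{\chi_i}$ ($1\le i\le n$) and $\mathcal{E}^{-y_j}\otimes\mathcal{K}_{\eta_j}$ ($1\le j\le m$). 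Writing $\bar\varpi$ as the composite of the isomorphism $[\iota]$ inverting the $y_j$-coordinates with the multiplication $\mathrm{mult}\colon\Gm^{n+m}\to\Gm$, one sees that $\bar\varpi_{+}$ (resp.\ $\bar\varpi_{\dagger}$) of such an external product is the iterated $\star_{*}$- (resp.\ $\star_{!}$-) convolution of its factors, the $y_j$-factors being first pulled back along $[\iota]$.

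To finish \ref{label::geom-1} I would record the elementary identifications $\mathcal{E}^{x}\otimes\mathcal{K}_{\chi}\simeq\Hyp(\alpha;\emptyset)$ when $\chi$ corresponds to $\alpha\in\tfrac1d\bb{Z}$, and $[\iota]^{+}(\mathcal{E}^{-y}\otimes\mathcal{K}_{\rho^{-1}})\simeq\Hyp(\emptyset;\beta)$ when $\rho$ corresponds to $\beta$, both checked directly from the operator \eqref{eq:Hyp-eq} (using \eqref{eq:coeff-change}); the minus sign in $\sigma$ and the inversion attached to $\varpi$ are exactly what turns the $\eta_j$ into $\rho_j^{-1}$ as in \eqref{eq:char}. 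Consequently the summand indexed by $(\chi_1,\dots,\chi_n,\rho_1^{-1},\dots,\rho_m^{-1})$, i.e.\ the $(\mu_d^{n+m},\chi\times\rho)$-isotypic component, equals $\Hyp(\alpha_1;\emptyset)\star_{?}\cdots\star_{?}\Hyp(\emptyset;\beta_m)=\Hyp(*;\alpha;\beta)$ for $?=+$ (resp.\ $\Hyp(!;\alpha;\beta)$ for $?=\dagger$). Since every summand of $\varpi_{?}\mathcal{E}^{\sigma}$ is of the form $\Hyp(*;\alpha';\beta')$ for suitable rational parameters, each is a single holonomic $\mathscr{D}_{\Gm}$-module by \cite[(6.3.6)]{katz1990exponential}, and hence $\varpi_{?}\mathcal{E}^{\sigma}$ is concentrated in degree $0$.

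For \ref{label::geom-3}, assume $\alpha_1=0$, so $\chi_1$ is trivial. Both $\varpi$ and $\sigma$ factor through the map $[d]_{x_1}$ raising only $x_1$ to the $d$-th power: $\varpi=\tilde\varpi\circ[d]_{x_1}$ and $\sigma=\tilde\sigma\circ[d]_{x_1}$, where, denoting by $w$ the first coordinate, $\tilde\varpi=w\prod_{i\ge 2}x_i^{d}/\prod_j y_j^{d}$ and $\tilde\sigma=w+\sum_{i\ge 2}x_i^{d}-\sum_j y_j^{d}$. As $[d]_{x_1}$ is finite étale, $[d]_{x_1,?}([d]_{x_1})^{+}\mathcal{E}^{\tilde\sigma}\simeq\mathcal{E}^{\tilde\sigma}\otimes[d]_{x_1,+}\mathcal{O}$ decomposes along the characters of the copy of $\mu_d$ acting on $x_1$, with trivial isotypic component $\mathcal{E}^{\tilde\sigma}$; using $\chi_1$ trivial, taking the $(\mu_d^{n+m},\chi\times\rho)$-isotypic part of $\varpi_{?}\mathcal{E}^{\sigma}$ therefore amounts to taking the $(G,\tilde\chi\times\rho)$-isotypic part of $\tilde\varpi_{?}\mathcal{E}^{\tilde\sigma}$, where $G=\mu_d^{n+m-1}$ acts on $x_2,\dots,y_m$. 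Finally, the substitution $w=z\prod_j y_j^{d}/\prod_{i\ge 2}x_i^{d}$ defines a $G$-equivariant isomorphism $\Phi$ from the torus of \eqref{eq:diagram-Hyp-mot} with coordinates $(z,x_2,\dots,y_m)$ onto the torus with coordinates $(w,x_2,\dots,y_m)$ — it is $G$-equivariant because $x_i^{d}$ and $y_j^{d}$ are $G$-invariant — and by construction $\tilde\varpi\circ\Phi=\pr_z$ and $\tilde\sigma\circ\Phi=f$ with $f$ as in \eqref{eq::Laurent-polynomial}. Hence $\tilde\varpi_{?}\mathcal{E}^{\tilde\sigma}\simeq\pr_{z,?}\Phi^{+}\mathcal{E}^{\tilde\sigma}\simeq\pr_{z,?}\mathcal{E}^{f}$ $G$-equivariantly; combining with \ref{label::geom-1}, whose degree-$0$ concentration lets us replace $\pr_{z,?}\mathcal{E}^{f}$ by $\cc{H}^{0}\pr_{z,?}\mathcal{E}^{f}$, and passing to the $(G,\tilde\chi\times\rho)$-isotypic component yields the two displayed isomorphisms.

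I expect the main obstacle to be the bookkeeping in \ref{label::geom-1}: matching the signs in $\sigma$ and the inversion in $\varpi$ with the convolution structure, identifying the rank one constituents with $\Hyp(\alpha;\emptyset)$ and $\Hyp(\emptyset;\beta)$ in the correct normalization (tracking the conventions for $[d]_{+}\mathcal{O}$ and for $\eqref{eq:Hyp-eq}$ carefully), and pinning down which product character labels the summand equal to $\Hyp(?;\alpha;\beta)$. Once this dictionary between \eqref{eq:diagram-Hyp-isotypic} and the iterated convolution is set up, both assertions reduce to formal manipulations with projection formulas, Künneth, and isotypic decompositions.
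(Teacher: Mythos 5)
Your argument is correct, and for part (2) it is essentially the paper's: you factor out the $x_1$-copy of $\mu_d$ (equivalently, perform the substitution $z=x_1\prod_{i\ge 2}x_i^{d}/\prod_j y_j^{d}$ to eliminate $x_1$, exactly as in the paper's identification \eqref{eq::identification}) and use the degree-$0$ concentration from part (1). For part (1) you take a somewhat different route. The paper argues by induction on $n+m$: it computes the base cases $(n,m)=(1,0)$ and $(0,1)$ via the projection formula along the $d$-th power map, and then propagates the statement through a convolution lemma (Lemma~\ref{lem::exterior-prod}, following Katz), obtaining the degree-$0$ concentration from the K\"unneth formula at each inductive step. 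You instead decompose $\varpi_{?}\mathcal{E}^{\sigma}$ all at once: factoring $\varpi$ and $\sigma$ through $[d]^{n+m}$, applying the projection formula and the Kummer decomposition of $[d]_{+}\mathcal{O}_{\Gm}$, you exhibit $\varpi_{?}\mathcal{E}^{\sigma}$ as a direct sum over all character tuples of iterated $\star_{?}$-convolutions of rank-one hypergeometric connections, identify the $(\chi\times\rho)$-isotypic summand with $\Hyp(?;\alpha;\beta)$ (your rank-one identifications $\mathcal{E}^{x}\otimes\mathcal{K}_{\chi}\simeq \Hyp(\alpha;\emptyset)$ and $[\iota]^{+}(\mathcal{E}^{-y}\otimes\mathcal{K}_{\rho^{-1}})\simeq \Hyp(\emptyset;\beta)$ do check out against the paper's conventions), and deduce the concentration in degree $0$ by applying Katz's (6.3.6) to every summand, rather than via the inductive K\"unneth argument; since the paper's definition of $\Hyp(?;\alpha;\beta)$ already invokes that reference for holonomicity of the convolution, this is a legitimate input. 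Both proofs run on the same ingredients (projection formula, K\"unneth and associativity of convolution, isotypic decomposition); yours avoids the induction at the cost of concentrating all the character-and-sign bookkeeping in one step, which, as you note, is where the remaining care is needed, whereas the paper isolates that bookkeeping in the two rank-one base cases.
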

\begin{proof}
	The case of $\Hyp(!;\alpha;\beta)$ can be deduced from the case of $\Hyp(*;\alpha;\beta)$ by applying the duality functor. So we only prove the latter case.

(1) Assume that $(n,m)=(1,0)$. Then $\sigma\colon \bb{G}_{m,x_1}\to \bb{A}^1$ is the map $x_1\mapsto x_1^d$ and $\varpi\colon \bb{G}_{m,x_1}\to \bb{G}_{m,z}$ is the $d$-th power map. So by the identity $\varpi_+\mathcal{O}_{\bb{G}_m}=\oplus_{i=0}^{d-1} \left(\mathcal{O}_{\bb{G}_m}, \mathrm{d}+\frac{i}{d}\frac{\mathrm{d}z}{z}\right)$ and the projection formula, we have
		$$(\varpi_+\mathcal{E}^{\sigma})
		=\mathcal{E}^{z}\otimes (\varpi_+\mathcal{O}_{\bb{G}_m})
		=\oplus_{i=0}^{d-1} \mathcal{E}^{z}\otimes\left(\mathcal{O}_{\bb{G}_m}, \mathrm{d}+\tfrac{i}{d}\tfrac{\mathrm{d}z}{z}\right),$$
	which is concentrated in degree $0$. Taking the isotypic component, we have
		\begin{equation*}
			\begin{split}
				(\varpi_+\mathcal{E}^{\sigma})^{(\mu_d^{n+m}, \chi\times \rho)}
				&=(\varpi_+\mathcal{E}^{x_1^d})^{(\mu_d,\chi_1)}=\mathcal{E}^{z}\otimes (\varpi_+\mathcal{O}_{\bb{G}_m})^{(\mu_d,\chi_1)}\\
				&=(\cc{O}_{\bb{G}_{m}},\rr{d}+\rr{d}z+\alpha_1\tfrac{\rr{d}z}{z})
				=\Hyp(*;\alpha_1;\emptyset)
			\end{split}
		\end{equation*}
	in the case where $(n,m)=(1,0)$. The proof of the case where $(n,m)=(0,1)$ is similar. In general, we use the induction on $n+m$. The proof follows from the following lemma.

\begin{lem}\label{lem::exterior-prod}
	Let $\alpha, \alpha , \beta$ and $\beta'$ be four sequences of rational numbers with common denominator $d$, whose lengths are $n,n',m$ and $m'$ respectively. We denote by $\chi_i,\chi_i',\rho_j,\rho_j'$ characters of $\mu_d$ corresponding to $\alpha_i,\alpha'_i,\beta_j,\beta_j'$ respectively. Let $\sigma$, and $\varpi$ (resp. $\sigma'$ and $\varphi'$) be the maps for $(n,m)$ (resp. $(n',m')$) in the diagram \eqref{eq:diagram-Hyp-isotypic}.
	
	Suppose that $(\varpi_+\mathcal{E}^{\sigma})$ and $(\varpi_+'\mathcal{E}^{\sigma'})$ are concentrated in degree $0$, and there are isomorphisms of $\scr{D}$-modules
		$$\Hyp(*;\alpha;\beta)
		\simeq (\varpi_+\mathcal{E}^{\sigma})^{(\mu_d^{n+m}, \chi\times \rho)}\ 
		\text{ and }\  
		\Hyp(*;\alpha';\beta')
		\simeq (\varpi_+\mathcal{E}^{\sigma'})^{(\mu_d^{n+m}, \chi'\times \rho')}.$$
	Then $((\varpi\cdot \varpi')_+\mathcal{E}^{\sigma\boxplus \sigma'})$ is also concentrated in degree $0$, and we have an isomorphism of $\scr{D}$-modules
			\begin{equation*}
				\begin{split}
					\Hyp(*;\alpha,\alpha';\beta,\beta')
					&\simeq ((\varpi\cdot \varpi')_+\mathcal{E}^{\sigma\boxplus \sigma})^{(\mu_d^{n+n'+m+m'}, \chi\times \chi'\times \rho\times \rho')}
				\end{split}
			\end{equation*}
		where $\varpi\cdot \varpi'=\mathrm{mult}\circ(\varpi\times \varpi')$, $\pr$ and $\pr'$ are the projections from $\bb{G}_m^{n+n'+m+m'}$ to $\bb{G}_m^{n+m}$ and $\bb{G}_m^{n'+m'}$ respectively, and $\sigma\boxplus \sigma'=\sigma\circ\pr+\sigma'\circ \pr'$ is the Thom-Sebastiani sum.
\end{lem}
\begin{proof}[Proof of Lemma~\ref{lem::exterior-prod}]
	This proof of this lemma is essentially that of \cite[Lem.\,5.4.3]{katz1990exponential}. Notice that the exterior product $\mathcal{E}^\sigma\boxtimes \mathcal{E}^{\sigma'}$ is $\mathcal{E}^{\sigma\boxplus \sigma'}$. Then
		\begin{equation*}
			\begin{split}
				(\varpi_+\mathcal{E}^\sigma)\star_*(\varpi'_+\mathcal{E}^{\sigma'})
				&=\mathrm{mult}_+((\varpi_+\mathcal{E}^{\sigma})\boxtimes (\varpi'_+\mathcal{E}^{\sigma'}))\\
				&=\mathrm{mult}_+(\varpi\times \varpi')_+ (\mathcal{E}^\sigma\boxtimes \mathcal{E}^{\sigma'})
				=(\varpi\cdot \varpi')_+\mathcal{E}^{\sigma\boxplus \sigma'}.
			\end{split}
		\end{equation*}
	By Künneth formula \cite[Prop.\,1.5.28(i) and Prop.\,1.5.30]{HTT-d-module}, we conclude that $(\varpi\cdot \varpi')_+\mathcal{E}^{\sigma\boxplus \sigma'}$ is again concentrated in degree $0$. We finish the proof by taking the corresponding isotypic components.
\end{proof}

(2)	Since $\alpha_1=0$, the character $\chi_1$ is trivial. So we have
	\begin{equation}\label{eq::identification}
		\begin{split}
			(\varpi_+\mathcal{E}^{\sigma})^{(\mu_d^{n+m}, \chi\times \rho)}
			&= \biggl(\Bigl(x_1\cdot \prod_{i=2}^nx_i^d/\prod_{j=1}^m y_j^d\Bigr)_+\mathcal{E}^{x_1+\sum_{i=2}^mx_i^d-\sum_jy_j^d}\biggr)^{(1\times G, 1\times \tilde\chi\times \rho)} \\
			&= (\pr_{z+}\mathcal{E}^{f})^{( G,\tilde\chi\times\rho)},
		\end{split}
	\end{equation}
	where we performed a change of variable $z=x_1\cdot \prod_{i=2}^nx_i^d/\prod_{j=1}^m y_j^d$ to get rid of the variable $x_1$ in the last isomorphism. Because $(\varpi_+\mathcal{E}^{\sigma})$ is concentrated in degree $0$, so is $(\pr_{z+}\mathcal{E}^{f})^{( G,\tilde\chi\times\rho)}$.
\end{proof}

\begin{cor}\label{cor::forget-support-iso}
	Assume that $(\alpha, \beta)$ is non-resonant. Then the natural map 
		$$ (\cc{H}^0\pr_{z\dagger}\mathcal{E}^{f})^{( G,\tilde\chi\times\rho)}\to  (\cc{H}^0\pr_{z+}\mathcal{E}^{f})^{( G,\tilde\chi\times\rho)}$$ 
	is an isomorphism of $\mathscr{D}_{\mathbb{G}_m}$-modules. In particular, for $a\in S(\bb{C})$, the forget-support map 
		$$\rr{H}^{n+m-1}_{\dR,c}(\mathbb{G}_{m}^{n+m-1},f_a)\to \Hdr{n+m-1}(\mathbb{G}_{m}^{n+m-1},f_a)$$
	is an isomorphism.
\end{cor}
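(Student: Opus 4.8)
The plan is to obtain the $\mathscr{D}_{\bb{G}_m}$-module statement as a formal consequence of \cref{prop::geometric-realization-convolution} and the fact that the forget-support map \eqref{eq:forget-support} is an isomorphism for non-resonant $(\alpha,\beta)$, and then to deduce the fibrewise statement from the non-degeneracy of $f_a$ proved in \cref{lem::Newton-Polygon-ineq-m=0,lem::Newton-Polygon-ineq-m-neq-0,lem::Newton-Polygon-ineq-n=m}. For the first assertion I would first normalize $\alpha_1=0$ (harmless by \eqref{eq:reduce-to-alpha1=0}, which preserves non-resonance), so that \ref{label::geom-3} identifies the source and the target of the map in question with $\Hyp(!;\alpha;\beta)$ and $\Hyp(*;\alpha;\beta)$ respectively. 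The point to check is that, under these identifications, the natural morphism $(\cc{H}^0\pr_{z\dagger}\mathcal{E}^{f})^{(G,\tilde\chi\times\rho)}\to(\cc{H}^0\pr_{z+}\mathcal{E}^{f})^{(G,\tilde\chi\times\rho)}$ corresponds to \eqref{eq:forget-support}; this holds because every isomorphism used in \cref{prop::geometric-realization-convolution} and \cref{lem::exterior-prod} (projection formula, Künneth formula, changes of variables, passage to an isotypic component) is compatible with the natural transformations $g_{\dagger}\to g_{+}$, while \eqref{eq:forget-support} is by construction induced through the convolutions by $\mathrm{mult}_{\dagger}\to\mathrm{mult}_{+}$. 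As $(\alpha,\beta)$ is non-resonant, \eqref{eq:forget-support} is an isomorphism, which settles the first assertion.

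For the fibrewise claim, fix $a\in S(\bb{C})$. Restricting the first assertion to the fibre at $a$ by $\mathscr{D}$-module base change along $\pr_z$ and $\{a\}\hookrightarrow\bb{G}_m$ — using that the $(\tilde\chi\times\rho)$-isotypic parts of $\pr_{z+}\mathcal{E}^{f}$ and $\pr_{z\dagger}\mathcal{E}^{f}$ are concentrated in degree $0$ by \cref{prop::geometric-realization-convolution} and restrict over $S$ to the $\mathcal{O}_S$-coherent connection $\Hyp(\alpha;\beta)$ — already gives the isomorphism on the $(\tilde\chi\times\rho)$-isotypic component of $\Hdr{n+m-1}(\bb{G}_m^{n+m-1},f_a)$, which is the part used in the Hodge-theoretic applications. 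To upgrade this to all of $\Hdr{n+m-1}(\bb{G}_m^{n+m-1},f_a)$, I would invoke that $f_a$ is non-degenerate with respect to its Newton polytope — this is \cref{lem::Newton-Polygon-ineq-m=0}(3), \cref{lem::Newton-Polygon-ineq-m-neq-0}(3) or \cref{lem::Newton-Polygon-ineq-n=m}(3), the last being applicable because $a\in S$ forces $a\neq1$ — and then appeal to the cohomology of non-degenerate Laurent polynomials on a torus (Adolphson--Sperber \cite{AS89,AS93}; see also Yu \cite{yu2012irregular}): the twisted de Rham complex $(\Omega^{\bullet}_{\bb{G}_m^{n+m-1}},\rr{d}+\rr{d}f_a)$ and its compactly supported analogue are concentrated in degree $n+m-1$, and the forget-support map between them is an isomorphism.

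I expect the genuine content to be this last step. The first assertion alone controls only the $(\tilde\chi\times\rho)$-isotypic part, and passing to the full cohomology really uses the non-degeneracy input; equivalently, one can run the base-change argument for every character $\eta$ of $G$, identifying the $\eta$-component with the fibre at $a$ of $\Hyp(?;0,\alpha^{(\eta)};\beta^{(\eta)})$ for suitably shifted parameters, and observe that the defect of $\pr_{z\dagger}\mathcal{E}^{f}\to\pr_{z+}\mathcal{E}^{f}$ is supported within the locus where $f_a$ is degenerate — contained in $\{1\}$ when $n=m$ and empty when $n>m$ by \cref{lem::Newton-Polygon-ineq-m=0,lem::Newton-Polygon-ineq-m-neq-0,lem::Newton-Polygon-ineq-n=m} — hence disjoint from $S$. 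The naturality verification in the first paragraph is routine but must be made explicit.
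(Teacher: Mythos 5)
Your first paragraph is essentially the paper's own argument: one verifies (by induction through the convolution construction, which is exactly the compatibility you describe) that the identifications of \cref{prop::geometric-realization-convolution} intertwine the natural map $\cc{H}^0\pr_{z\dagger}\mathcal{E}^{f}\to\cc{H}^0\pr_{z+}\mathcal{E}^{f}$ with \eqref{eq:forget-support}, which is an isomorphism in the non-resonant case; and the fibrewise statement on the $(G,\tilde\chi\times\rho)$-isotypic part is obtained, as you do, by non-characteristic restriction at $a$ and base change. One small correction: do not reduce to $\alpha_1=0$ via \eqref{eq:reduce-to-alpha1=0} — a Kummer twist shifts all of $\alpha,\beta$ and hence changes the character $\tilde\chi\times\rho$ appearing in the statement; the corollary is simply to be read in the setting of Proposition~\ref{label::geom-3}, where $\alpha_1=0$ is already assumed.

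The genuine gap is your ``upgrade'' to the full cohomology. Adolphson--Sperber and Yu give, for a non-degenerate Laurent polynomial with full-dimensional Newton polytope, concentration in the top degree and the dimension $N!\Vol(\Delta)$ of either group; they do not assert that the forget-support map is an isomorphism, and that assertion is false in this generality. Already for $f(y)=cy$ on $\mathbb{G}_m$ with $c\neq 0$ (non-degenerate, $\dim\Delta=1$), both $\rr{H}^1_{\dR,c}$ and $\Hdr{1}$ are one-dimensional, yet the forget-support map vanishes: $\mathcal{E}^{cy}$ extends to a connection on $\mathbb{A}^1$, the map $j_\dagger\to j_+$ factors through this extension, and $\Hdr{\bullet}(\mathbb{A}^1,cy)=0$. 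The same phenomenon occurs inside the paper's situation whenever $m\geq 1$: the trivial-character isotypic component of $\cc{H}^0\pr_{z?}\mathcal{E}^{f}$ is $\Hyp(?;0,\dots,0;0,\dots,0)$, a resonant case, and there the kernel and cokernel of the map from $!$ to $*$ are nonzero negligible modules (extensions of Kummer-type connections), which are lisse on all of $\mathbb{G}_m$ — in particular not punctual and not supported on any degeneracy locus, contrary to your fallback argument; their fibres at every $a\in S$ are nonzero, so the map on the full $\Hdr{n+m-1}(\mathbb{G}_m^{n+m-1},f_a)$ is not an isomorphism (it is when $m=0$, since then non-resonance is vacuous for every character of $G$). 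Consequently the displayed fibrewise map should be read with the $(G,\tilde\chi\times\rho)$ superscript: that is exactly what the paper's proof yields and what is used later in \cref{prop::EMHS-properties}, and no further input is needed for it — nor would the non-degeneracy of $f_a$ suffice for the stronger claim you aim at.
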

\begin{proof}
	Using induction on the size of $\alpha$ and $\beta$, one can verify that the diagram
		$$\begin{tikzcd}
			\Hyp(!;\alpha;\beta) \ar[r,"\simeq"] \ar[d,"\simeq"]& \Hyp(*;\alpha;\beta) \ar[d,"\simeq"]\\
			(\cc{H}^0\varpi_\dagger\mathcal{E}^{\sigma})^{(\mu_d^{n+m}, \chi\times \rho)}\ar[r]\ar[d,"\simeq"] &(\cc{H}^0\varpi_+\mathcal{E}^{\sigma})^{(\mu_d^{n+m}, \chi\times \rho)}|_S\ar[d,"\simeq"]\\
			(\cc{H}^0\pr_{z\dagger}\mathcal{E}^{f})^{( G,\tilde\chi\times\rho)} \ar[r]& (\cc{H}^0\pr_{z+}\mathcal{E}^{f})^{( G,\tilde\chi\times\rho)}
		\end{tikzcd}$$
	is commutative, where the horizontal morphisms are the natural morphisms, the two upper vertical morphisms are those from Proposition~\ref{label::geom-1}, and the two lower vertical morphisms are \eqref{eq::identification}. So we deduce the isomorphism 
		$$(\cc{H}^0\pr_{z\dagger}\mathcal{E}^{f})^{( G,\tilde\chi\times\rho)}\to (\cc{H}^0\pr_{z+}\mathcal{E}^{f})^{( G,\tilde\chi\times\rho)}.$$
	At last, we take the non-characteristic inverse image along $a\colon \mathrm{Spec}(\mathbb{C})\to \mathbb{G}_m$, and the base change theorem \cite[Thm.\,1.7.3\,\&\,Prop.\,1.5.28]{HTT-d-module} to conclude the isomorphism of twisted de Rham cohomologies.
\end{proof}

\begin{rem}\label{rem::dual-mod-hypergeometric} 
			When ($\alpha,\beta$) is non-resonant, we deduce from Proposition~\ref{prop::geometric-realization-convolution} the isomorphism
				$$[z\mapsto (-1)^{n-m}z]^+\Hyp(\alpha,\beta)\simeq (\cc{H}^0\pr_{z+}\mathcal{E}^{-f})^{( G,\tilde\chi\times\rho)},$$
			by performing a change of variable by sending $x_i$ and $y_j$ to $-x_i$ and $-y_j$ respectively in the diagram \eqref{eq:diagram-Hyp-mot}. According to \eqref{eq:coeff-change}, the first term in the above is $\Hyp_{(-1)^{n-m}}(\alpha;\beta)$. In particular, the results in Corollary~\ref{cor::forget-support-iso} remain valid if we replace $f$ with $-f$.
\end{rem}

\subsection{Explicit cyclic vectors for hypergeometric connections}
We present explicit cyclic vectors for $\Hyp(\alpha;\beta)$ in terms of sections of some subquotients of some relative de Rham cohomology equipped with their Gauss--Manin connections. 
This point of view will be used in the computation of Hodge numbers in \cref{sec::hodge-number}.

Recall that $d$ is an integer such that $a_i=d\alpha_i$ and $b_j=d\beta_j$ are integers for all $i,j$, and we take notation from \eqref{eq:diagram-Hyp-mot}. 
When $(\alpha,\beta)$ is non-resonant and $
\alpha_1=0$, there exists an isomorphism between the hypergeometric connection $\Hyp(\alpha;\beta)$ and the relative de Rham cohomology $\mathcal{H}^{n+m-1}_{\mathrm{dR}}(U/S,f)^{(G,\tilde\chi\times \rho)}$ equipped with the Gauss--Manin connection by \cref{prop::geometric-realization-convolution}.

\begin{prop}\label{prop::geometric-realization}
	Suppose that $\alpha_1=0$ and $(\alpha, \beta)$ is non-resonant. 
	The relative de Rham cohomology $\mathcal{H}^{n+m-1}_{\mathrm{dR}}(U/S,f)^{(G,\tilde\chi\times \rho)}$ admits a cyclic vector, defined by the cohomology class of the differential form 
	$$
		\omega=\prod_{i=2}^n x_i^{a_i}\cdot \prod_{j=1}^m y_j^{-b_j} \frac{\mathrm{d}x_2}{x_2}\cdots\frac{\mathrm{d}x_n}{x_n} \frac{\mathrm{d}y_1}{y_1}\cdots \frac{\mathrm{d}y_m}{y_m}.
	$$ 
\end{prop}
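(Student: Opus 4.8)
The plan is to reduce the statement to the non-vanishing of a single cohomology class. By \cref{prop::geometric-realization-convolution}, $\mathcal{H}^{n+m-1}_{\mathrm{dR}}(U/S,f)^{(G,\tilde\chi\times\rho)}$ is isomorphic, as a $\mathscr{D}_S$-module, to $\Hyp(\alpha;\beta)$, which is $\mathcal{O}_S$-coherent of generic rank $n$ and, since $(\alpha,\beta)$ is non-resonant, irreducible by \cite[Prop.\,3.2]{katz1990exponential}. Now for any irreducible $\mathscr{D}_S$-module $\mathcal{M}$ that is $\mathcal{O}_S$-coherent of rank $n$, a non-zero global section $v$ is automatically a cyclic vector: the ascending chain $V_j:=\sum_{0\le k<j}\mathcal{O}_S\nabla^k v$ stabilizes at some $V_{j_0}$, which is a $\mathscr{D}_S$-submodule containing $v$, hence equals $\mathcal{M}$; comparing ranks forces $j_0=n$, so $v,\nabla v,\dots,\nabla^{n-1}v$ is an $\mathcal{O}_S$-basis at the generic point. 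Thus it remains only to prove that $[\omega]\neq 0$.

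For this it is enough to show that the restriction of $[\omega]$ to a single fiber is non-zero: by non-characteristic base change along a point $a\in S(\mathbb{C})$ (as in the proof of \cref{cor::forget-support-iso}), the fiber of $\mathcal{H}^{n+m-1}_{\mathrm{dR}}(U/S,f)^{(G,\tilde\chi\times\rho)}$ at $a$ is $\Hdr{n+m-1}(\mathbb{G}_m^{n+m-1},f_a)^{(G,\tilde\chi\times\rho)}$. I would detect this non-vanishing by a period integral. By \cref{rem::dual-mod-hypergeometric} we may replace $f$ by $-f$, under which $\omega$ is again (up to an invertible factor) the corresponding monomial form. The period pairing between $\Hdr{n+m-1}(\mathbb{G}_m^{n+m-1},-f_a)$ and the rapid-decay homology of $\mathcal{E}^{-f_a}$ is well defined, so it suffices to produce a cycle $\gamma$ — a product of (tilted) rays on which $\operatorname{Re}(-f_a)\to-\infty$ along every stratum of the boundary — with $\int_\gamma e^{-f_a}\,\prod_{i=2}^n x_i^{a_i}\prod_{j=1}^m y_j^{-b_j}\,\tfrac{\mathrm{d}x_2}{x_2}\wedge\cdots\wedge\tfrac{\mathrm{d}y_m}{y_m}\neq 0$. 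When $m=0$ and $a>0$ one may simply take $\gamma=(\mathbb{R}_{>0})^{n-1}$: the integrand is real and positive and the integral converges, so it is non-zero; in general one tilts the $y_j$-rays to make $\operatorname{Re}(y_j^d)$ negative and chooses the remaining rays so that the coupling monomial too contributes a decaying exponential.

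An alternative justification of $[\omega]\neq 0$ uses the non-degeneracy of $f_a$ from Lemmas \ref{lem::Newton-Polygon-ineq-m=0}, \ref{lem::Newton-Polygon-ineq-m-neq-0} and \ref{lem::Newton-Polygon-ineq-n=m}: by Adolphson--Sperber \cite{AS89}, $\Hdr{n+m-1}(\mathbb{G}_m^{n+m-1},f_a)$ has a basis of monomial forms $x^{\tau}\cdot\tfrac{\mathrm{d}x_2}{x_2}\wedge\cdots\wedge\tfrac{\mathrm{d}y_m}{y_m}$ with $\tau$ adapted to the Newton polytope $\Delta(f_a)$, and the exponent vector $\tau_0=(a_2,\dots,a_n,-b_1,\dots,-b_m)$ of $\omega$ — whose entries lie in $[0,d)$ for the $a_i=d\alpha_i$ and in $(-d,0]$ for the $-b_j=-d\beta_j$, i.e.\ are the minimal representatives of the residues cut out by $\tilde\chi\times\rho$ — is one of these basis monomials. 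It is also worth noting that one can instead identify $[\omega]$ outright with the image of the tautological cyclic vector $[1]$ of $\mathscr{D}_S/\mathscr{D}_S\,\rr{Hyp}_1(\alpha;\beta)$: tracing through the proof of \cref{prop::geometric-realization-convolution}, where $\Hyp(*;\alpha;\beta)$ is built by iterated Künneth (\cref{lem::exterior-prod}) from the rank-one pieces $(\varpi_+\mathcal{E}^{x_i^d})^{(\mu_d,\chi_i)}\simeq\mathcal{O}\cdot x_i^{a_i}$ and $(\varpi_+\mathcal{E}^{-y_j^d})^{(\mu_d,\rho_j\inv)}\simeq\mathcal{O}\cdot y_j^{-b_j}$, the external product of these monomials goes (as in \cite[Lem.\,5.4.3]{katz1990exponential}) to a cyclic vector of the convolution, which the monomial coordinate change $z=x_1\prod_{i\ge 2}x_i^d/\prod_j y_j^d$ of \eqref{eq::identification} (using $\alpha_1=0$ to set $x_1^{a_1}=1$) together with the identification $\mathcal{H}^0\pr_{z+}\mathcal{E}^f=\mathcal{H}^{n+m-1}_{\mathrm{dR}}(U/S,f)$ turns into $[\omega]$ up to a unit of $\mathcal{O}_S$.

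The only non-formal step is the non-vanishing of $[\omega]$, and I expect it to be the main obstacle in whichever form one chooses: constructing and estimating the rapid-decay cycle in the period approach; making the Adolphson--Sperber monomial basis explicit enough to be sure that $x^{\tau_0}$ is a genuine basis element and not subject to a reduction relation (delicate in the non-convenient case $m\neq 0$, where $0$ is not interior to $\Delta(f_a)$); or carrying out the bookkeeping that matches cyclic vectors with monomial differential forms under Künneth and under $\mathcal{H}^0\pr_{z+}\mathcal{E}^f=\mathcal{H}^{n+m-1}_{\mathrm{dR}}(U/S,f)$ in the third approach.
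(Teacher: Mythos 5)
Your reduction is sound as far as it goes: by \cref{prop::geometric-realization-convolution} and Katz's irreducibility result, $\mathcal{H}^{n+m-1}_{\mathrm{dR}}(U/S,f)^{(G,\tilde\chi\times \rho)}\simeq \Hyp(\alpha;\beta)$ is irreducible, so any \emph{nonzero} global section generates it over $\mathscr{D}_S$, and it suffices to prove $[\omega]\neq 0$ (even at a single fiber). But that nonvanishing is precisely the content of the proposition, and none of your three routes establishes it beyond the case $m=0$, $a>0$. The period-cycle argument is only carried out when the integrand is real and positive; for $m\geq 1$ and general $a\in\mathbb{C}^{\times}$ the ``tilted rays'' are not constructed, and once the integrand is complex the nonvanishing of the integral is not automatic (it would require an actual evaluation, e.g.\ in Gamma factors). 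The Adolphson--Sperber route is based on a false premise: for $m\geq 1$ the exponent vector $(a_2,\dots,a_n,-b_1,\dots,-b_m)$ has negative $v_j$-coordinates whenever $b_j>0$, so by \cref{lem::Newton-Polygon-ineq-m-neq-0} and \cref{lem::Newton-Polygon-ineq-n=m} it lies \emph{outside} the cone $\mathbb{R}_{\geq 0}\Delta(f_a)$ and cannot be one of the monomial basis elements; this is exactly why the basis of \cref{p:coh basis} uses exponents $d-b_j$ and $2d-b_j$. Rewriting $[\omega]$ in terms of cone monomials requires the Gauss--Manin relations, which you do not develop. The third route (tracing $[1]$ through Künneth and the coordinate change) is only a bookkeeping sketch.

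By contrast, the paper does not argue through nonvanishing of a period or an Adolphson--Sperber basis at all. Its key step is \cref{lem::computation-GaussManin}, the explicit computation $(D-t_i/d)\tilde\omega=x_i^d\tilde\omega$ and $(D+s_j/d)\tilde\omega=y_j^d\tilde\omega$ for monomial classes, from which it deduces both that $\omega$ satisfies the operator $\rr{Hyp}(\alpha;\beta)$ and that $\omega\neq 0$; the map $\mathscr{D}_S/\rr{Hyp}(\alpha;\beta)\to\mathcal{H}^{n+m-1}_{\mathrm{dR}}(U/S,f)^{(G,\tilde\chi\times\rho)}$, $1\mapsto\omega$, is then an isomorphism by irreducibility of the source plus equality of ranks. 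Note this also gives more than bare cyclicity --- it pins down the identification with $\Hyp(\alpha;\beta)$ sending $1$ to $\omega$, which is what is used later in \cref{p:coh basis} and \cref{p:compareDmods}. To complete your proposal you would essentially have to prove \cref{lem::computation-GaussManin} (or an equivalent explicit computation) anyway, so the gap you flag yourself --- the nonvanishing of $[\omega]$ --- is a genuine missing step, not a technicality.
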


\begin{rem} \label{r:cyclic-vector}
	Under the above assumption, the isomorphism class of $\Hyp(\alpha;\beta)$ depends only on the congruent classes of $\alpha,\beta$ modulo $\mathbb{Z}$. 
	Then, any differential form 
	\[
		\omega=\prod_{i=2}^n x_i^{u_i}\cdot \prod_{j=1}^m y_j^{-v_j} \frac{\mathrm{d}x_2}{x_2}\cdots\frac{\mathrm{d}x_n}{x_n} \frac{\mathrm{d}y_1}{y_1}\cdots \frac{\mathrm{d}y_m}{y_m},
	\]
	satisfying $u_i\equiv a_i, v_j\equiv b_j$ modulo $d$, is a cyclic vector of $\mathcal{H}^{n+m-1}_{\mathrm{dR}}(U/S,f)^{(G,\tilde\chi\times \rho)}$.
\end{rem}
\begin{proof}
	The morphism $\pr_z:\colon U\to S$ is smooth \eqref{eq:diagram-Hyp-mot}. It follows that the relative de Rham cohomologies $\mathcal{H}^{i}_{\mathrm{dR}}(U/S,f)$ are equipped with the Gauss-Manin connections $D:=\nabla_{z\partial_z}$, given by
	\begin{equation}\label{eq::Gauss-Manin}
		\nabla_{z\partial_z}\omega=z\partial_z\omega+ z\partial_z(f)\omega
	\end{equation}
	for $0\leq i\leq n+m-1$. By Lemmas~\ref{lem::Newton-Polygon-ineq-m=0}--\ref{lem::Newton-Polygon-ineq-n=m}, the Laurent polynomial $f_a:=f|_{\pr_z\inv(a)}$ is non-degenerate for each $a\in S(\bb{C})$. By \cite[Thm.\,1.4 and Thm.\,4.1]{adolphson1997twisted}, the cohomology group $\mathcal{H}^{i}_{\mathrm{dR}}(U/S,f_a)$ vanishes if $i\neq n+m-1$. 

	Now we consider the $(G,\tilde\chi\times \rho)$-isotypic component of the connection $\mathcal{H}^{n+m-1}_{\mathrm{dR}}(U/S,f)$, which can be identified with $(\mathcal{H}^0\pr_{z+}\mathcal{E}^{f})^{(G,\tilde\chi\times \rho)}$. It remains to prove that the cohomology class defined by the differential form 
		$$
		\omega=\prod_{i=2}^n x_i^{a_i}\cdot \prod_{j=1}^m y_j^{-b_j} \frac{\mathrm{d}x_2}{x_2}\cdots\frac{\mathrm{d}x_n}{x_n} \frac{\mathrm{d}y_1}{y_1}\cdots \frac{\mathrm{d}y_m}{y_m}
		$$ 
	is a cyclic vector for $\mathcal{H}^{n+m-1}_{\mathrm{dR}}(U/S,f)^{(G,\tilde\chi\times \rho)}$.

\begin{lem}\label{lem::computation-GaussManin}
	Let $t_2,\cdots,t_n,s_1,\cdots,s_m$ be integers and set
		$$
			\tilde \omega:=\prod_{i=2}^n x_i^{t_i}\cdot \prod_{j=1}^m y_j^{s_j} \frac{\mathrm{d}x_2}{x_2}\cdots\frac{\mathrm{d}x_{n}}{x_{n}} \frac{\mathrm{d}y_1}{y_1}\cdots \frac{\mathrm{d}y_m}{y_m}
		$$
	as a class in $\mathcal{H}^{n+m-1}_{\mathrm{dR}}(U/S,f)$. 
	For each $i$ and $j$ such that $2\leq i\leq n$ and $1\leq j\leq m$ respectively, we have
		$$
			(D-t_i/d)\tilde \omega=x_i^d\cdot \tilde \omega \quad \text{and} \quad  (D+s_j/d)\tilde \omega=y_j^d\cdot \tilde \omega.
		$$
\end{lem}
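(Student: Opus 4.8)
The plan is to work inside the relative twisted de Rham complex $(\Omega^\bullet_{U/S},\rr{d}_{U/S}+\rr{d}f\wedge)$ that computes $\mathcal{H}^{n+m-1}_{\mathrm{dR}}(U/S,f)$, and to realize both identities as Griffiths--Dwork-type relations obtained by subtracting explicit exact $(n+m-2)$-forms. Write $w:=z\cdot\prod_{j=1}^m y_j^d\big/\prod_{i=2}^n x_i^d$, so that $f=\sum_{i=2}^n x_i^d-\sum_{j=1}^m y_j^d+w$ and $z\partial_z(f)=w$. The first step is to compute $D\tilde\omega$ directly: since the coefficient $\prod_i x_i^{t_i}\prod_j y_j^{s_j}$ of $\tilde\omega$ does not involve $z$, we have $z\partial_z\tilde\omega=0$, so \eqref{eq::Gauss-Manin} gives $D\tilde\omega=w\cdot\tilde\omega$ in $\mathcal{H}^{n+m-1}_{\mathrm{dR}}(U/S,f)$. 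Hence the two assertions are equivalent to the cohomological relations $x_i^d\,\tilde\omega=(w-t_i/d)\,\tilde\omega$ and $y_j^d\,\tilde\omega=(w+s_j/d)\,\tilde\omega$.

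For the $x_i$-relation, fix $i$, let $\Omega=\tfrac{\rr{d}x_2}{x_2}\wedge\cdots\wedge\tfrac{\rr{d}x_n}{x_n}\wedge\tfrac{\rr{d}y_1}{y_1}\wedge\cdots\wedge\tfrac{\rr{d}y_m}{y_m}$, and let $\widehat{\Omega}_i$ be $\Omega$ with the factor $\tfrac{\rr{d}x_i}{x_i}$ deleted, so $\tfrac{\rr{d}x_i}{x_i}\wedge\widehat{\Omega}_i=\epsilon_i\,\Omega$ for a sign $\epsilon_i$. I would then consider the relative $(n+m-2)$-form $\eta_i:=\tfrac1d\prod_k x_k^{t_k}\prod_j y_j^{s_j}\cdot\widehat{\Omega}_i$ on $U=S\times\Gm^{n+m-1}$ (well-defined since all exponents are integers). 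Applying $\rr{d}_{U/S}+\rr{d}f\wedge$ to $\eta_i$, every contribution except the one coming from $\partial_{x_i}$ contains a repeated $\tfrac{\rr{d}x_k}{x_k}$ or $\tfrac{\rr{d}y_j}{y_j}$ and so vanishes; using $x_i\partial_{x_i}f=d\,x_i^d-d\,w$ and $x_i\partial_{x_i}\!\bigl(\tfrac1d\prod_k x_k^{t_k}\prod_j y_j^{s_j}\bigr)=\tfrac{t_i}{d}\prod_k x_k^{t_k}\prod_j y_j^{s_j}$, one gets
$$(\rr{d}_{U/S}+\rr{d}f\wedge)\,\eta_i=\epsilon_i\Bigl(\tfrac{t_i}{d}+x_i^d-w\Bigr)\tilde\omega.$$
Since the left-hand side is exact it vanishes in cohomology, and combining this with $D\tilde\omega=w\tilde\omega$ yields $(D-t_i/d)\,\tilde\omega=x_i^d\,\tilde\omega$ (the sign $\epsilon_i$ is irrelevant, since only the vanishing of the bracket matters).

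The $y_j$-relation is entirely parallel: deleting $\tfrac{\rr{d}y_j}{y_j}$ from $\Omega$ and using $y_j\partial_{y_j}f=d\,w-d\,y_j^d$ produces an exact $(n+m-2)$-form equal to $\epsilon'_j\bigl(\tfrac{s_j}{d}+w-y_j^d\bigr)\tilde\omega$, which gives $(D+s_j/d)\,\tilde\omega=y_j^d\,\tilde\omega$. The whole argument is a short computation; the only point that requires attention is the bookkeeping of signs and of which monomial terms survive the wedge products, and the choice of the right primitive $\eta_i$ (equivalently, of the coefficient $\tfrac1d$) so that the $\partial_{x_i}f$ and $\partial_{x_i}$ contributions combine into precisely $x_i^d-w+t_i/d$. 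No genuine obstacle is expected beyond this routine verification.
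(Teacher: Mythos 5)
Your proposal is correct and follows essentially the same route as the paper: one computes $D\tilde\omega = z\partial_z(f)\,\tilde\omega$ from \eqref{eq::Gauss-Manin} and then obtains the relations $(t_i/d + x_i^d - w)\tilde\omega = 0$ (and its $y_j$-analogue) by applying the relative twisted differential to the $(n+m-2)$-form given by the same Laurent coefficient with the factor $\frac{\mathrm{d}x_i}{x_i}$ (resp. $\frac{\mathrm{d}y_j}{y_j}$) deleted, exactly as in the paper's proof. The only cosmetic difference is your normalization by $\tfrac1d$ and explicit sign bookkeeping, which the paper absorbs by writing the exact relation as $d\bigl(x_2^d-(D-t_2/d)\bigr)\tilde\omega=0$.
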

\begin{proof}
	We prove the identity for $(D-t_2)\tilde \omega$. And the proofs for the rest are identical. By \eqref{eq::Gauss-Manin}, we have $D\,\tilde \omega= \tfrac{z \cdot \prod_j y_j^d}{\prod_i x_i^d}\,\tilde \omega.$ Then by the definition of the relative twisted de Rham cohomology
		\begin{equation*}
			\begin{split}
				0&= \nabla_{U/S}\left(\prod_{i=2}^n x_i^{t_i}\cdot \prod y_j^{s_j} \frac{\mathrm{d}x_3}{x_3}\cdots\frac{\mathrm{d}x_{n}}{x_{n}} \frac{\mathrm{d}y_1}{y_1}\cdots \frac{\mathrm{d}y_m}{y_m}\right)
				=t_2\cdot \tilde \omega+ x_2\cdot\partial_{x_2}f \cdot \tilde \omega \\
				&=t_2\cdot \tilde \omega+ x_2\cdot\biggl(dx_2^{d-1}-dx_2\inv \frac{z\prod y_j^d}{\prod x_i^d}\biggr) \,\tilde \omega 
				=d(x_2^d-(D-t_2/d))\,\tilde \omega.
			\end{split} 
		\end{equation*}
	This is exactly what we want to prove.
\end{proof}

	We show that $\omega$ satisfies the hypergeometric differential equation $\rr{Hyp}(\alpha;\beta)$.
	By Lemma~\ref{lem::computation-GaussManin}, we have 
		$$
			\prod_{i=2}^n (D-\alpha_i)\omega  =  \prod_{i=2}^nx_i^d\cdot \omega  
		\quad \text{  and  } \quad
			\prod_{j=1}^m (D-\beta_j)\omega  =  \prod_{j=1}^my_j^d\cdot \omega  .
		$$
	Then, we deduce from \eqref{eq::Gauss-Manin} that
		$$
			\prod_{i=1}^n (D-\alpha_i)\omega  =D\Big( \prod_{i=2}^nx_i^d\cdot \omega  \Big)=z\prod_{j=1}^my_j^d\cdot \omega  = z\prod_{j=1}^m (D-\beta_j)\omega  .
		$$
	Using Lemma~\ref{lem::computation-GaussManin}, we deduce that $\omega\neq 0$. So we get a nonzero morphism 
		\begin{equation}\label{eq:cyclic-vector}
			\begin{split}
				\scr{D}_{S}/\rr{Hyp}(\alpha;\beta)\to \oplus_{i=0}^{n-1} \mathcal{O}_{\bb{G}_m}\cdot D^i\omega\subset \mathcal{H}^{n+m-1}_{\mathrm{dR}}(U/S,f)^{G,\tilde\chi\times \rho}
			\end{split}
		\end{equation} 
	defined by sending $1$ to $\omega $. 
	Since the left-hand side is irreducible, and both sides have the same ranks, 
	the above morphism is an isomorphism. 
	By Proposition \ref{prop::geometric-realization-convolution}, $\omega $ is a cyclic vector of $\Hyp(\alpha;\beta)
	\simeq \mathcal{H}^{n+m-1}_{\mathrm{dR}}(U/S,f)^{G,\tilde\chi\times \rho}$.
\end{proof}

\begin{rem}\label{rem::dual-hypergeometric} 
If we replace $\mathcal{E}^{f}$ by $({\bb{G}_{m}},\mathrm{d}-\mathrm{d}f)=({\bb{G}_{m}},\mathrm{d}+\mathrm{d}f)^{\vee}$, the direct sum $\bigoplus_{i=0}^{n-1}\cc{O}D^i\omega $ is the $(G,\tilde \chi\times \rho)$-isotypic component of
	$\mathcal{H}^{n+m-1}_{\mathrm{dR}}(\bb{G}_m^{n+m}/\bb{G}_{m},-f),$
	isomorphic to the connection $\Hyp_{(-1)^{n-m}}(  \alpha ;  \beta )$. To see this, it suffices to notice that the corresponding identities in Lemma~\ref{lem::computation-GaussManin} become
		$$(D-t_i/d)\omega_{t,s}=-x_i^d\omega_{t,s} \text{ and } (D+s_j/d)\omega_{t,s}=-y_j^d\omega_{t,s}$$
	in this case. The rest of the proof relies on the same calculation above and Remark~\ref{rem::dual-mod-hypergeometric}.
\end{rem}

\begin{secnumber} \textbf{Resonant case.}
	When $(\alpha,\beta)$ is resonant, the modified hypergeometric $\scr{D}$-module $\Hyp(*;\alpha;\beta)$ depends only on the classes of $\alpha$ and $\beta$ modulo $\mathbb{Z}$. 
	In \cite[6.3.8]{katz1990exponential}, Katz asked whether $\Hyp(*;\alpha;\beta)$ is isomorphic to the connection $\Hyp\bigl((\alpha_i+r_i);(\beta_j+s_j)\bigr)$ \eqref{eq:Hyp-conn} for suitable integers $r_i,s_j\in \bb{Z}$. 
	We provide a positive answer to this question in the following proposition. 
\end{secnumber}

\begin{prop} \label{p:compareDmods}
	When $(\alpha,\beta)$ is resonant, there exists a positive integer $h$ depending on $\alpha\mod \bb{Z}$ and $\beta\mod \bb{Z}$, such that for any integers $r,s>h$, the modified hypergeometric $\scr{D}$-module $\Hyp(*;\alpha;\beta)|_S$ is isomorphic to the hypergeometric connection $\Hyp\bigl((\alpha_1, \alpha_2-r,\ldots,\alpha_n-r);\beta+s\bigr)$.
\end{prop}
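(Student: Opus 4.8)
The plan is to realise $\Hyp(*;\alpha;\beta)|_S$ by the twisted de Rham cohomology of \cref{prop::geometric-realization-convolution} and to exhibit, for suitably shifted parameters, an explicit cyclic vector for it. First I would reduce to $\alpha_1=0$: since tensoring by a Kummer connection commutes with $\star_*$ (because $\mathrm{mult}^*(\cc{O},\rr{d}+\gamma\tfrac{\rr{d}z}{z})\simeq(\cc{O},\rr{d}+\gamma\tfrac{\rr{d}z}{z})\boxtimes(\cc{O},\rr{d}+\gamma\tfrac{\rr{d}z}{z})$ and by the projection formula) and by \eqref{eq:reduce-to-alpha1=0}, twisting by $(\cc{O},\rr{d}-\alpha_1\tfrac{\rr{d}z}{z})$ replaces $(\alpha,\beta)$ by $(\alpha-\alpha_1,\beta-\alpha_1)$ on both sides of the asserted isomorphism; undoing this twist at the end recovers the statement, with a threshold $h$ that will depend only on $\alpha\bmod\bb{Z}$ and $\beta\bmod\bb{Z}$. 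Assume then $\alpha_1=0$, let $d$ be the common denominator of the representatives $\alpha_i,\beta_j\in[0,1)$, write $a_i=d\alpha_i$, $b_j=d\beta_j$, and set $M:=\Hyp(*;\alpha;\beta)|_S$. By \cref{prop::geometric-realization-convolution} (which carries no non-resonance hypothesis) together with the identification of $(\cc{H}^0\pr_{z+}\mathcal{E}^{f})^{(G,\tilde\chi\times\rho)}|_S$ with relative twisted de Rham cohomology, $M\simeq\cc{H}^{n+m-1}_{\dR}(U/S,f)^{(G,\tilde\chi\times\rho)}$ with its Gauss--Manin connection $D=\nabla_{z\partial_z}$; by Lemmas~\ref{lem::Newton-Polygon-ineq-m=0}--\ref{lem::Newton-Polygon-ineq-n=m}, \cite{adolphson1997twisted} and \cite{katz1990exponential}, $M$ is a connection on $S$ of rank $n$.

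Next, for integers $r,s\ge 1$ put $\alpha'_r=(0,\alpha_2-r,\ldots,\alpha_n-r)$, $\beta'_s=(\beta_1+s,\ldots,\beta_m+s)$, and consider the class
\[
\omega'_{r,s}=\prod_{i=2}^{n}x_i^{\,a_i-dr}\cdot\prod_{j=1}^{m}y_j^{\,-b_j-ds}\;\frac{\rr{d}x_2}{x_2}\cdots\frac{\rr{d}x_n}{x_n}\frac{\rr{d}y_1}{y_1}\cdots\frac{\rr{d}y_m}{y_m}\in M,
\]
which lies in the $(G,\tilde\chi\times\rho)$-isotypic part because $a_i-dr\equiv a_i$ and $-b_j-ds\equiv-b_j\pmod d$. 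Running the computation from the proof of \cref{prop::geometric-realization} with \cref{lem::computation-GaussManin} applied to the exponents of $\omega'_{r,s}$ (so that $(D-(\alpha_i-r))\omega'_{r,s}=x_i^{d}\omega'_{r,s}$ and $(D-(\beta_j+s))\omega'_{r,s}=y_j^{d}\omega'_{r,s}$), and using $\alpha'_1=0$ and \eqref{eq::Gauss-Manin}, one obtains $\rr{Hyp}_1(\alpha'_r;\beta'_s)\,\omega'_{r,s}=0$ in $M$. Hence there is a morphism of $\mathscr{D}_S$-modules $\phi_{r,s}\colon\mathscr{D}_S/\rr{Hyp}_1(\alpha'_r;\beta'_s)\to M$ sending $1$ to $\omega'_{r,s}$, with image the submodule $\mathscr{D}_S\cdot\omega'_{r,s}$. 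The top-order coefficient in $\partial_z$ of $\rr{Hyp}_1(\alpha'_r;\beta'_s)$ is $z^n$ (if $n>m$) or $z^n(1-z)$ (if $n=m$), which is invertible on $S$; so the source of $\phi_{r,s}$ is a connection of rank $n$ on $S$, and since both sides then have rank $n$, $\phi_{r,s}$ is an isomorphism as soon as it is surjective (a surjection of vector bundles with connection of equal rank on the curve $S$ is an isomorphism).

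Finally I would establish surjectivity for $r,s\gg 0$, i.e.\ $\mathscr{D}_S\cdot\omega'_{r,s}=M$. The raising relations of \cref{lem::computation-GaussManin} give $\prod_{i=2}^{n}(D-(\alpha_i-r))\,\omega'_{r,s}=\omega'_{r-1,s}$ and $\prod_{j=1}^{m}(D-(\beta_j+s))\,\omega'_{r,s}=\omega'_{r,s-1}$, so $\mathscr{D}_S\cdot\omega'_{r,s}$ is increasing in $r$ and in $s$; iterating them, every monomial class $\prod_i x_i^{t_i}\prod_j y_j^{u_j}\tfrac{\rr{d}x}{x}\tfrac{\rr{d}y}{y}$ with $t_i\equiv a_i$ and $u_j\equiv-b_j\pmod d$ lies in $\mathscr{D}_S\cdot\omega'_{r,s}$ as soon as $dr\ge a_i-t_i$ and $ds\ge-b_j-u_j$ for all $i,j$. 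Since such monomial classes generate $M$ over $\cc{O}_S$, the increasing union $\bigcup_{r,s}\mathscr{D}_S\cdot\omega'_{r,s}$ equals $M$; as $M$ is a Noetherian $\mathscr{D}_S$-module, there is a positive integer $h$ --- depending only on $M$ and the forms $\omega'_{r,s}$, hence only on $\alpha\bmod\bb{Z}$ and $\beta\bmod\bb{Z}$ --- such that $\mathscr{D}_S\cdot\omega'_{r,s}=M$ whenever $r,s>h$. Then $\phi_{r,s}$ is an isomorphism, so $M\simeq\mathscr{D}_S/\rr{Hyp}_1(\alpha'_r;\beta'_s)=\Hyp(\alpha'_r;\beta'_s)$; undoing the twist of the first step gives $\Hyp(*;\alpha;\beta)|_S\simeq\Hyp\bigl((\alpha_1,\alpha_2-r,\ldots,\alpha_n-r);\beta+s\bigr)$ for $r,s>h$. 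I expect the crux to be exactly this exhaustion step --- in particular the (slightly counterintuitive) monotonicity $\mathscr{D}_S\cdot\omega'_{r-1,s}\subseteq\mathscr{D}_S\cdot\omega'_{r,s}$, which is what turns Noetherianity into a single uniform threshold rather than merely one good pair $(r,s)$ --- together with the input that $M$ is a connection of rank exactly $n$, which is the only place where the specific geometry (non-degeneracy of $f_a$ in Lemmas~\ref{lem::Newton-Polygon-ineq-m=0}--\ref{lem::Newton-Polygon-ineq-n=m}, and the rank in the non-resonant case) is used.
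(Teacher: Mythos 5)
Your proposal is correct and takes essentially the same route as the paper: the same geometric realization $\Hyp(*;\alpha;\beta)|_S\simeq\mathcal{H}^{n+m-1}_{\mathrm{dR}}(U/S,f)^{(G,\tilde\chi\times\rho)}$, the same candidate class $\omega'_{r,s}$, and the same raising relations from \cref{lem::computation-GaussManin} give surjectivity of $\mathscr{D}_S/\rr{Hyp}_1(\alpha'_r;\beta'_s)\to\Hyp(*;\alpha;\beta)|_S$ and hence an isomorphism by comparing ranks. The only (harmless) deviation is how the threshold $h$ is produced: the paper extracts it explicitly from the monomial expansion of a chosen basis of the relative de Rham cohomology, whereas you get it from the monotonicity of $\mathscr{D}_S\cdot\omega'_{r,s}$ in $(r,s)$ together with Noetherianity of $M$.
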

\begin{proof}
	We may assume that $\alpha_1=0$. Let $\tilde \omega_1,\ldots,\tilde \omega_n$ be a representative of a basis of the connection $\mathcal{H}^{n+m-1}_{\mathrm{dR}}(U/S,f)^{G,\tilde\chi\times \rho}$. More precisely, we can write 
		$$\tilde \omega_k=\sum_{e\in \bb{Z}^{n-1},f\in \bb{Z}^m} \epsilon_{k,e,f}\prod_{i=2}^n x_i^{a_i+d\cdot e_i} \prod_{j=1}^my_j^{-b_j+d\cdot f_j}\frac{\mathrm{d}x_2}{x_2}\cdots\frac{\mathrm{d}x_{n}}{x_{n}} \frac{\mathrm{d}y_1}{y_1}\cdots \frac{\mathrm{d}y_m}{y_m},$$
	where only finitely many $\epsilon_{k,e,f}$ are non-zero. We equip $\bb{Z}^{n+m-1}$ with the partial order defined by the relation that $ a\geq b $ if $ a-b\in \bb{N}^{n+m-1}$. Let $(e_0,f_0)$ be a maximal element in the set $\{(e',f')\mid (e',f')\leq (e,f) \text{ if }\epsilon_{k,e,f}\neq 0\}$. Then we take $h$ to be the the maximal value among $\{|(e_0)|_i, |(f_0)|_j\}$.  
	
	For any $r,s>h$, as in \cref{prop::geometric-realization}, we define a morphism of $\mathscr{D}$-modules:
	\begin{equation}
		\begin{split}
			\scr{D}_{S}/\rr{Hyp}(0,\alpha_2-r,\dots,\alpha_n-r;\beta+s)\to 
			\oplus_{i=0}^{n-1} \mathcal{O}_{\bb{G}_m}\cdot D^i\omega\subset \mathcal{H}^{n+m-1}_{\mathrm{dR}}(U/S,f)^{G,\tilde\chi\times \rho}
			\end{split}
		\label{eq:cyclic-vector-2}
	\end{equation}
	by sending $1$ to 
	\begin{equation*}
		\begin{split}
			\omega=\prod_{i=2}^n x_i^{a_i-d\cdot r}\cdot \prod_{j=1}^m y_j^{-b_j-d\cdot s} \frac{\mathrm{d}x_2}{x_2}\cdots\frac{\mathrm{d}x_n}{x_n} \frac{\mathrm{d}y_1}{y_1}\cdots \frac{\mathrm{d}y_m}{y_m}.
		\end{split}
	\end{equation*}
	Since for all $(e,f)$ with $\epsilon_{k,e,f}\neq 0$, we have $a_i+d\cdot e_i\geq a_i-d\cdot r_i$ and $b_j+d\cdot f_j\geq b_j-d\cdot s_j$ for any $i$ and $j$, we deduce that the class defined by $\prod_{i=2}^n x_i^{a_i+d\cdot e_i} \prod_{j=1}^my_j^{-b_j+d\cdot f_j}\frac{\mathrm{d}x_2}{x_2}\cdots\frac{\mathrm{d}x_{n}}{x_{n}} \frac{\mathrm{d}y_1}{y_1}\cdots \frac{\mathrm{d}y_m}{y_m}$ lies in the image of \eqref{eq:cyclic-vector-2} by \cref{lem::computation-GaussManin}. This morphism is surjective and therefore, an isomorphism.
\end{proof}

\section{Irregular Hodge filtration of hypergeometric connections}\label{sec::hodge-number}

This section aims to calculate the (irregular) Hodge filtrations of hypergeometric connections (see \cref{thm::hodge-number} and \cref{thm:Hodge-fil}). 

In this section, let $n\geq m \ge 0$ be two integers, $\alpha=(\alpha_1,\ldots,\alpha_n)$ and $\beta=(\beta_1,\ldots,\beta_j)$ two sequences of non-decreasing rational numbers in $[0,1)$. 

\subsection{Exponential mixed Hodge structures}\label{subsec::emhs}
To explain certain duality on the irregular Hodge filtration of hypergeometric connections, we use the language of exponential mixed Hodge structures introduced by Kontsevich-Soibelman \cite{Kontsevich2011}. We recall the basic definitions of exponential mixed Hodge structures from \cite[Appx.]{fresan2018hodge}.

Let $X$ be a smooth algebraic variety and $K$ a number field. We denote by $\mathrm{MHM}(X, K)$ the abelian category of \textit{mixed Hodge modules} on $X$ with coefficients in $K$. 
In particular, when $X=\mathrm{Spec}(\mathbb{C})$, the category $\mathrm{MHM}(X,K)$ is equivalent to the category of mixed $K$-Hodge structures. 
The bounded derived categories $\mathrm{D}^b(\mathrm{MHM}(X, K))$ admit a six functor formalism. 
We put a lower left subscript $_{\rr{H}}$ for the functors of mixed Hodge modules. For more details about mixed Hodge modules, see \cite{saito1990}.

Let $\pi\colon \bb{A}^1\to \mathrm{Spec}(\bb{C})$ be the structure morphism. The category $\rm EMHS(K)$ of \textit{exponential mixed Hodge structures} with coefficients in $K$ is defined as the full subcategory of $\mathrm{MHM}(\bb{A}^1,K)$, whose objects $N^{\mathrm{H}}$ have vanishing cohomology on $\bb{A}^1$, i.e., satisfying $_{\mathrm{H}}\pi_*N^{\mathrm{H}}=0$.

There is an exact functor $\Pi\colon \mathrm{MHM}(\bb{A}^1,K)\to \mathrm{MHM}(\bb{A}^1,K)$ defined by
	\begin{equation}\label{eq::projector}
	N^{\mathrm{H}} \mapsto ~_{\mathrm{H}}s_*(N^{\mathrm{H}}\boxtimes _{\mathrm{H}}j_!\mathcal{O}_{\bb{G}_m}^{\mathrm{H}})
	\end{equation}
where $j\colon \bb{G}_{m,\bb{C}}\to \bb{A}^1$ is the inclusion and $s\colon\bb{A}^1\times \bb{A}^1\to \bb{A}^1$ is the summation map. The functor $\Pi$ is a projector onto $\rm{EMHS}(K)$, i.e. it factors through $\rm{EMHS}(K)$ with essential image $\rm{EMHS}(K)$. Using this functor, the dual of an object $M$ in $\rm{EMHS}(K)$ is defined by $\Pi( ~_{\rr{H}}[t\mapsto -t]^* \bb{D}(M))$, where $t$ is the coordinate of $\bb{A}^1$.

For each object $\Pi(N^{\mathrm{H}})$ of the category $\rm{EMHS}(K)$, there exists a weight filtration $W^{\mathrm{EMHS}}_\bullet$ on $\Pi(N^{\mathrm{H}})$, defined by the weight filtration on $N^{\mathrm{H}}$: 
	$W_n^{\rr{EMHS}}\Pi (N^{\mathrm{H}}):=\Pi(W_n N^{\mathrm{H}})$. 
	We will drop the superscript for simplicity.

	The \textit{de Rham fiber} functor from $\rm{EMHS}(K)$ to $\mathrm{Vect}_{\bb{C}}$ is defined by 
\begin{equation}\label{eq::de Rham fiber}
	\Pi(N^{\mathrm{H}})\mapsto\Hdr{1}(\bb{A}^1,\Pi(N)\otimes \mathcal{E}^{t}),
\end{equation}
where $\Pi(N)$ is the underlying $\mathscr{D}$-module of $\Pi(N^{\rr{H}})$ and $\mathcal{E}^t$ denotes the exponential $\mathscr{D}$-module $(\mathcal{O}_{\bb{A}^1},\mathrm{d}+\mathrm{d}t)$.  

The de Rham fiber functor is faithful and one can associate an \textit{irregular Hodge filtration} $F^{\bullet}_{\rr{irr}}$ on the de Rham fibers of objects in $\rm{EMHS}(K)$ by \cite[\S6.b]{Sabbah-2010-laplace}, compatible with the definitions in \cite{esnault17e1,Sabbah-2010-laplace,Sabbah-Yu-2015}. 

\subsubsection{Objects of EMHS attached to regular functions} \label{sss:EMHS f}
Let $X$ be a smooth affine variety of dimension $n$ and $K$ a number field. We denote by $K^{\rr{H}}_X$ the trivial Hodge module on $X$ with coefficients in $K$. For a regular function $g\colon X\to \bb{A}^1$ and an integer $r$, we consider the following exponential mixed Hodge structures 
\begin{equation*}
	\mathrm{H}^r(X,g):=\Pi(\mathcal{H}^{r-n}~_{\mathrm{H}}g_*K_X^{\rr{H}}),\ \mathrm{H}^r_c(X,g):=\Pi(\mathcal{H}^{r-n}~_{\mathrm{H}}g_!K_X^{\rr{H}}).
\end{equation*}
The exponential mixed Hodge structures $\rr{H}^r(X,g)$, $\rr{H}_c^r(X,g)$ are mixed of weights at least $r$ and mixed of weights at most $r$ respectively by {\cite[A.19]{fresan2018hodge}}.
 
The de Rham fiber of $\mathrm{H}^r_{?}(X,g)$ is isomorphic to $\mathrm{H}^r_{\mathrm{dR},?}(X,g)$, and the irregular Hodge filtration on the de Rham fibers are identified with those on twisted de Rham cohomologies introduced in \cite{yu2012irregular}.

\subsubsection{The irregular Hodge filtration on twisted de Rham cohomology}
	We briefly recall the definition of the irregular Hodge filtration on the twisted de Rham cohomology following \cite{yu2012irregular}. Let $X$ and $g$ be as above, $j\colon X\to \bar X$ a smooth compactification of $X$, and $D:=\bar X\backslash X$ the boundary divisor. The pair $(\bar X,D)$ is called a \textit{good compactification} of the pair $(X,g)$, if $D$ is normal crossing and $g$ extends to a morphism $\bar g\colon \bar X\to \bb{P}^1$.

Let $P$ be the pole divisor of $g$. The twisted de Rham complex $(\Omega_{\bar X}^\bullet(*D),\nabla=\mathrm{d}+\mathrm{d}g)$ admits a decreasing filtration $F^\lambda(\nabla):=F^0(\lambda)^{\geq \lceil\lambda\rceil}$, indexed by non-negative real numbers $\lambda$, where $F^0(\lambda)$ is the complex
	$$\mathcal{O}_{\bar X}(\lfloor -\lambda P\rfloor)\xrightarrow{\nabla} \Omega_{\bar X}^1(\log D)(\lfloor (1-\lambda)P\rfloor)\to \cdots \to \Omega_{\bar X}^p(\log D)(\lfloor (p-\lambda)P \rfloor)\to \cdots.$$

The \textit{irregular Hodge filtration} on the de Rham cohomology $\mathrm{H}^1_{\dR}(X,g)$ is defined by
	\begin{equation}\label{eq::irreg-hodge-fil} 
		\begin{split}
			F_{\rr{irr}}^\lambda\mathrm{H}^i_{\mathrm{dR}}(X,g):=\im(\bb{H}^i(\bar X,F^\lambda(\nabla))\to \mathrm{H}^i_{\mathrm{dR}}(X,g)),
		\end{split}
	\end{equation}
which is independent of the choice of the good compactification $(\bar X,D)$ \cite[Thm.\,1.7]{yu2012irregular}.

When $X$ is isomorphic to a torus $\mathbb{G}_m^n$, the regular function $g$ on $X$ is a Laurent polynomial of the form $\sum_{P=(p_1,\ldots,p_n)}c(P)x^P$. We refine the normal fan of the Newton polytope $\Delta(g)$ to make the associated toric variety $X_{\rr{tor}}$ smooth proper. Although $(X_{\rr{tor}},D_{\rr{tor}}=X_{\rr{tor}}\backslash X)$ is not a good compactification for the pair $(X,g)$ in general, we can still define $F^{\lambda}_{\mathrm{NP}}(\nabla)$ and the \textit{Newton polyhedron filtration} $F^\lambda_{\mathrm{NP}}\mathrm{H}^1_{\dR}(U,\nabla)$ similarly to that in \eqref{eq::irreg-hodge-fil} by replacing the good compactification $(\bar X,D)$ with $(X_{\tor},D_{\tor})$,

When $g$ is non-degenerate with respect to $\Delta(g)$, by \cite[Thm 1.4]{adolphson1997twisted} and \cite[Thm 4.6]{yu2012irregular}, the only non-vanishing twisted de Rham cohomology group of the pair $(X,g)$ is the middle cohomology group $\rr{H}^n_{\dR}(X,g)$, and the irregular Hodge filtration $F_{\rr{irr}}^\bullet$ agrees with the Newton polyhedron filtration $F^\bullet_{\rr{NP}}$ on $\Hdr{n}(X,g)$. In particular, we have
	$$\bb{H}^i(X_{\tor},F^\lambda_{\mathrm{NP}}(\nabla))=\mathrm{H}^i(\Gamma(X_{\tor},F^\lambda_{\mathrm{NP}}(\nabla))),$$
which allows us to compute the irregular Hodge filtration via the knowledge of $\Delta(g)$. 

Now, we present an explicit way to calculate the Newton polyhedron filtration. 
For a cohomology class $\omega= x^Q\frac{\rr{d}x_1}{x_1}\wedge\cdots\wedge \frac{\rr{d}x_n}{x_n}$ such that the lattice point $Q=(q_1,\ldots,q_n)$ lies in $\mathbb{R}_{\geq 0}\Delta(g)$, we define $w(Q)$ to be the weight of $Q$ in the sense of \cite{adolphson1997twisted}, i.e. the minimal positive real number $w$ such that $Q\in w\cdot \Delta(g)$. The associated cohomology class of $\omega$ lies in $F^\lambda_{\rr{NP}}\Hdr{n}(X,g)$ if
	$$\omega \in \Gamma(X_{\tor},\Omega_{X_{\tor}}^n(\log D_{\tor})(\lfloor (n-\lambda)P \rfloor)).$$
	Notice that each ray $\rho$ in the normal fan of $\Delta(g)$ corresponds to an irreducible component $P_{\rho}$ of $P$. Let $v_{\rho}$ be a primitive vector of the ray $\rho$. Then, the multiplicity of $\omega$ along $P_\rho$ is given by $<Q,v_{\rho}>$ \cite[p.\,61]{Fulton93}. Taking the multiplicities of $P_{\rho}$ in $P$ into account, we have
\begin{equation}\label{eq:newton-monomial}
	x^Q\frac{\rr{d}x_1}{x_1}\wedge\cdots\wedge \frac{\rr{d}x_n}{x_n}\in F^{n-w(Q)}_{\mathrm{NP}}\mathrm{H}^n_{\dR}(X,g),
\end{equation} 
as indicated in \cite[p.\,126 footnote]{yu2012irregular}.

\subsubsection{The EMHS associated with hypergeometric connections}\label{sec:emhs-hyp}

In this subsection, we assume $\alpha_1=0$ and let $\tilde \chi\times \rho$ be the product of characters associated with $\alpha_i$ and $\beta_j$ in \eqref{eq:char}. 
 
\begin{defn}\label{defn::hypergeometric-EMHS}
	Let $K$ be the number field $\bb{Q}(\zeta_d^{a_i},\zeta_d^{b_j})$ and $a \in S(\bb{C})$. For $?\in \{\emptyset,c\}$, we define 
		\begin{equation*}
			{E}_?(a;\alpha;\beta):=\mathrm{H}^{n+m-1}(\bb{G}_m^{n+m-1},f_a)^{\left(G, \tilde\chi\times \rho\right)}
		\end{equation*}
	as exponential mixed Hodge structures with coefficients in $K$ in the sense of \eqref{sss:EMHS f}.
\end{defn}

By Proposition~\ref{prop::geometric-realization-convolution} and the base change theorem, the de Rham fiber of $ {E}(a;\alpha;\beta)$ is isomorphic to the fiber of $\Hyp_\lambda(\alpha;\beta)$ at $a\cdot \lambda\in S(\bb{C})$, for $\lambda\in \bb{Q}\cros$. 

Let $t$ be the largest natural number such that $\alpha_t=0$. We let $\bar\alpha$ and $\bar \beta$ be the sequences of rational numbers defined by
	\begin{equation}\label{eq::conjugate-sequence-alpha}	\bar\alpha_i=
		\begin{cases}
			0 &1\leq k\leq t,\\ 
			1-\alpha_{n+t+1-k} & t+1\leq k\leq n,
		\end{cases}
		\quad \textnormal{and} \quad \bar\beta_k=1-\beta_k. 
	\end{equation}

\begin{prop}\label{prop::EMHS-properties}

\begin{enumerate}[label=(\arabic*).]
	\item The dual of the exponential mixed Hodge structure $ {E}_c(a;\alpha;\beta)$ is isomorphic to $ {E}((-1)^{n-m}a;\bar \alpha;\bar \beta)$.
	\item 
	When $(\alpha, \beta)$ is non-resonant, the exponential mixed Hodge structures $ {E}_?(a;\alpha;\beta)$ for $?\in \{\varnothing, c\}$ are all isomorphic. In particular, they are pure of weight $n+m-1$.
	\end{enumerate}
\end{prop}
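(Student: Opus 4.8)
The plan is to prove part (2) first --- it yields purity immediately --- and then obtain part (1) from Poincar\'e--Verdier duality together with a monomial change of variables. For (2): the canonical ``forget supports'' natural transformation ${}_{\rr{H}}g_{!}\to{}_{\rr{H}}g_{*}$, applied to $g=f_a$ and restricted to the $(G,\tilde\chi\times\rho)$-isotypic parts, gives a morphism of exponential mixed Hodge structures $E_c(a;\alpha;\beta)\to E(a;\alpha;\beta)$ whose image under the de Rham fiber functor is the forget-support map $\rr{H}^{n+m-1}_{\dR,c}(\bb{G}_m^{n+m-1},f_a)^{(G,\tilde\chi\times\rho)}\to\Hdr{n+m-1}(\bb{G}_m^{n+m-1},f_a)^{(G,\tilde\chi\times\rho)}$; the latter is an isomorphism by \cref{cor::forget-support-iso}, since $(\alpha,\beta)$ is non-resonant. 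Because the de Rham fiber functor on $\mathrm{EMHS}(K)$ is exact and faithful, hence conservative, the morphism $E_c(a;\alpha;\beta)\to E(a;\alpha;\beta)$ is already an isomorphism in $\mathrm{EMHS}(K)$. Finally, $E_c(a;\alpha;\beta)$ is mixed of weights $\le n+m-1$ and $E(a;\alpha;\beta)$ of weights $\ge n+m-1$ by \cite[A.19]{fresan2018hodge}, so an isomorphism between them forces both to be pure of weight $n+m-1$.

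For (1): I would invoke Poincar\'e--Verdier duality for the exponential mixed Hodge structures attached to a regular function $g\colon X\to\bb{A}^1$ on a smooth affine variety of dimension $N$, namely that the $\mathrm{EMHS}$-dual of $\rr{H}^r_c(X,g)$ is isomorphic, up to a Tate twist by $N$, to $\rr{H}^{2N-r}(X,-g)$; this follows from the self-duality of $K_X^{\rr{H}}$ up to twist, the exchange $\bb{D}\circ{}_{\rr{H}}g_!={}_{\rr{H}}g_*\circ\bb{D}$, and the replacement of $g$ by $-g$ built into the $\mathrm{EMHS}$-dual through $[t\mapsto -t]^*$. Taking $X=\bb{G}_m^{n+m-1}$, $N=r=n+m-1$, and using that the cohomology of the non-degenerate $f_a$ is concentrated in the middle degree (Lemmas~\ref{lem::Newton-Polygon-ineq-m=0}--\ref{lem::Newton-Polygon-ineq-n=m} and \cite{adolphson1997twisted}), the dual of $E_c(a;\alpha;\beta)$ becomes, up to Tate twist, $\rr{H}^{n+m-1}(\bb{G}_m^{n+m-1},-f_a)^{(G,(\tilde\chi\times\rho)\inv)}$ --- the character being inverted because $\bb{D}$ is contravariant and $G$-equivariant. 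Next, fixing $\xi\in\bb{C}$ with $\xi^d=-1$, the automorphism $\phi$ of $\bb{G}_m^{n+m-1}$ sending $(x_i,y_j)$ to $(\xi x_i,\xi y_j)$ commutes with $G$ and satisfies $\phi^*(-f_a)=f_{(-1)^{n-m}a}$ (obtained by counting the powers of $\xi$ in the monomials $x_i^d$, $y_j^d$ and $\prod_j y_j^d/\prod_i x_i^d$), so $\phi^*$ identifies the last group with $\rr{H}^{n+m-1}(\bb{G}_m^{n+m-1},f_{(-1)^{n-m}a})^{(G,(\tilde\chi\times\rho)\inv)}$. Finally, the $x_i$-components of $(\tilde\chi\times\rho)\inv$ are the characters of $-\alpha_i$ and the $y_j$-components those of $\beta_j\equiv-\bar\beta_j$; since $\{-\alpha_i\bmod\bb{Z}\}$ and $\{\bar\alpha_i\}$ coincide as multisets, a comparison with \eqref{eq:char} for $(\bar\alpha,\bar\beta)$ shows that $(\tilde\chi\times\rho)\inv$ is the character attached to $(\bar\alpha,\bar\beta)$ up to a permutation of the $x$-coordinates, under which $f_{(-1)^{n-m}a}$ is symmetric. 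Hence, noting that $\bar\alpha_1=0$ so that \cref{defn::hypergeometric-EMHS} applies, the resulting isotypic component is $E((-1)^{n-m}a;\bar\alpha;\bar\beta)$, up to the Tate twist --- which is the origin of the $\bb{R}$-shift in the normalization of the Hodge numbers.

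The main obstacle is the bookkeeping in (1): obtaining Poincar\'e--Verdier duality in the correct cohomological degree and with the correct Tate twist, checking its compatibility with passage to isotypic components, and then carrying the character together with the sign $(-1)^{n-m}$ through the substitution $x_i\mapsto\xi x_i$, $y_j\mapsto\xi y_j$ so as to land exactly on the sequences $\bar\alpha,\bar\beta$ of \eqref{eq::conjugate-sequence-alpha}. Part (2) is comparatively formal, requiring beyond \cref{cor::forget-support-iso} only the conservativity of the de Rham fiber functor and the weight estimates of \cite[A.19]{fresan2018hodge}.
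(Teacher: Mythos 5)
Your proposal is correct and follows essentially the same route as the paper: part (2) via the forget-support morphism, \cref{cor::forget-support-iso}, faithfulness/exactness of the de Rham fiber functor and the weight bounds of \cite[A.19]{fresan2018hodge}; part (1) via EMHS duality combined with the substitution $x_i\mapsto \xi x_i$, $y_j\mapsto \xi y_j$ (with $\xi^d=-1$) identifying $-f_a$ with $f_{(-1)^{n-m}a}$ and the inversion of the character on isotypic components. Your write-up merely supplies details the paper leaves implicit (the explicit root $\xi$, the Tate twist by $n+m-1$, and the character bookkeeping leading to $\bar\alpha,\bar\beta$), so no further comparison is needed.
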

\begin{proof}
	(1) The EMHS $\mathrm{H}^{n+m-1}(\bb{G}_m^{n+m-1},f_a)$ is dual to $\mathrm{H}^{n+m-1}(\bb{G}_m^{n+m-1},-f_a)$, which is also isomorphic to $\mathrm{H}^{n+m-1}(\bb{G}_m^{n+m-1},f_{(-1)^{n-m}a})$. By taking the corresponding isotypic components of them, we deduce the first assertion.

	(2)  Since the de Rham fiber functor is faithful, the forget support morphism
		$${E}_c(a;\alpha;\beta) \to {E}(a;\alpha;\beta)$$
	is an isomorphism by Corollary~\ref{cor::forget-support-iso}. 
	Hence, the exponential mixed Hodge structures ${E}_c(a;\alpha;\beta)$ and ${E}(a;\alpha;\beta)$ are isomorphic, and are pure of weight $n+m-1$.
\end{proof}

\begin{rem}
We can define confluent hypergeometric  motives as exponential motives in the sense of \cite{fresan2018exponential}, such that the exponential mixed Hodge structures $E(1;\alpha;\beta)$ and the hypergeometric connections $\Hyp(\alpha,\beta)$ are their Hodge realizations and $\scr{D}$-module realizations respectively. 

More precisely, assume $n>m$ and let $\zeta_{n-m}$ be an $(n-m)$-th primitive root of unity.  The group $\mu_d^{n+m-1}$ acts on $\bb{G}_{m,(x_i,y_j)}^{n+m-1}\times \bb{G}_{m,t}$ as before on the coordinates $(x_i,y_j)$, and the group $\mu_{n-m}$ acts on the coordinates $(x_i,y_j,t)$ by 
			$$\zeta_{n-m}\cdot (x_i,y_j,t)=(\zeta_{n-m}\inv x_i,\zeta_{n-m}\inv y_j,\zeta_{n-m}t).$$
Then we define the confluent hypergeometric motives as 
    $$\mathrm{H}^{n+m-1}(\bb{G}_m^{n+m-1},f_1)^{\left(\mu_{d}^{n+m-1}\times \mu_{n-m}, \tilde\chi\times \rho\times 1\right)},$$
which is a priori defined over $K(\zeta_{n-m})$ with coefficients in $K$. 

Here, the field of definition $K(\zeta_{n-m})$ is not optimal. For example, using an argument similar to that in \cite[Rem.\,3.3]{qin23function} and the Galois descent \cite[Thm.\,5.2.4]{fresan2018exponential}, one can show that these motives are defined over $K$. If $L$ is a subfield of $K$ such that $\gal(\bb{C}/L)$ preserves both the sets $\{\exp(2\pi i \alpha_i)\}$ and $\{\exp(2\pi i \beta_j)\}$, then these motives can further descend to $L$ (see also \cite[Thm.\,1.1]{Betti-hypergeometric-} for a related discussion).
\end{rem}

\subsection{A basis in relative twisted de Rham cohomology}\label{subsec::basis}
In this subsection, we assume $\alpha_1=0$. 
We define positive integers $s_1,\ldots,s_{m+1}$ by
	$$
		s_r=
		\begin{cases}
			1 & r=0\\
			\#\{i\colon \alpha_{i}<\beta_r\} & 1\leq r\leq m\\
			n+1 &r=m+1
		\end{cases}
	$$
	and for $r$ and $\ell$ such that $0\leq r\leq m$ and $1\leq \ell\leq s_{r+1}-s_r$
	, we set 
$$
		g_{r,\ell}=
		x_2^{a_2}\cdots x_{s_r+\ell-1}^{a_{s_r+\ell-1}}\cdot x_{s_r+\ell}^{a_{s_r+\ell}-d}\cdots  x_{n}^{a_{n}-d} \cdot y_1^{d-b_1}\cdots y_r^{d-b_r}\cdot y_{r+1}^{2d-b_{r+1}}\cdots y_m^{2d-b_m},
	$$
Let $\eta=\frac{\mathrm{d}x_2}{x_2}\cdots\frac{\mathrm{d}x_n}{x_n} \frac{\mathrm{d}y_1}{y_1}\cdots \frac{\mathrm{d}y_m}{y_m}$ and $\omega_{r, \ell}=g_{r,\ell}\cdot \eta$ be the corresponding differential forms in $\mathcal{H}_{\dR}^{n+m-1}(U/S,\pm f)^{(G,\tilde\chi\times \rho)}$, where $U$ and $S$ are defined in \eqref{eq:diagram-Hyp-mot}.

\begin{prop} \label{p:coh basis}
	If $(\alpha, \beta)$ is non-resonant, the cohomology classes defined by 
	\[
		\omega_{r,\ell},\quad 0\leq r\leq m,\quad 1\leq \ell\leq s_{r+1}-s_{r}
	\]
	in $\mathcal{H}_{\dR}^{n+m-1}(U/S,\pm f)^{(G,\tilde\chi\times \rho)}$ form a basis over $\mathcal{O}_S$.
\end{prop}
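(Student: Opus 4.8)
The plan is to verify that the $\omega_{r,\ell}$ are a basis by a dimension count combined with a triangularity (filtration) argument, using the cyclic‑vector description from \cref{prop::geometric-realization}. First I would record that, by \cref{prop::geometric-realization-convolution} and \cref{prop::geometric-realization}, $\mathcal{H}_{\dR}^{n+m-1}(U/S,\pm f)^{(G,\tilde\chi\times\rho)}$ is a locally free $\mathcal{O}_S$‑module of rank $n$ (it is the hypergeometric connection $\Hyp_{\pm 1}(\alpha;\beta)$, up to the coordinate change of \cref{rem::dual-hypergeometric}), and that the number of forms $\omega_{r,\ell}$ is $\sum_{r=0}^{m}(s_{r+1}-s_r)=s_{m+1}-s_0=n+1-1=n$. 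So it suffices to show the $\omega_{r,\ell}$ are $\mathcal{O}_S$‑linearly independent, or equivalently that they span; by Nakayama it is enough to check this fiberwise over a single point $a\in S(\bb{C})$, i.e.\ inside $\Hdr{n+m-1}(\bb{G}_m^{n+m-1},f_a)^{(G,\tilde\chi\times\rho)}$.

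For the independence I would set up the reduction/Gauss--Manin calculus of \cref{lem::computation-GaussManin}: writing $D=\nabla_{z\partial_z}$, that lemma gives $x_i^d\cdot\tilde\omega=(D-t_i/d)\tilde\omega$ and $y_j^d\cdot\tilde\omega=(D+s_j/d)\tilde\omega$ (with signs flipped in the $-f$ case, per \cref{rem::dual-hypergeometric}), so multiplication by any monomial $\prod x_i^{d e_i}\prod y_j^{d f_j}$ is realized by a polynomial in $D$ of degree $\sum|e_i|+\sum|f_j|$ applied to $\omega_{0,1}$ — and more generally any $\omega_{t,s}$ with exponents congruent to those of $\omega$ modulo $d$ is, by \cref{r:cyclic-vector}, an element $P(D)\omega_{0,1}$ with $P$ of controlled degree. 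The forms $\omega_{r,\ell}$ are designed so that, as $(r,\ell)$ runs over its $n$ values, the "total excess degree" $\sum_i e_i + \sum_j f_j$ of $g_{r,\ell}$ relative to a fixed base monomial increases by exactly one at each step: $\omega_{0,1}$ is the cyclic vector $\omega$ of \cref{prop::geometric-realization} (all $x_i^{a_i-d}$, all $y_j^{2d-b_j}$... up to a normalization), and passing from $\omega_{r,\ell}$ to the next form replaces exactly one factor $x_i^{a_i-d}$ by $x_i^{a_i}$ (raising the $x$‑degree by $d$) within the block, and passing from the last form with a given $r$ to $\omega_{r+1,1}$ replaces one $y_j^{2d-b_j}$ by $y_j^{d-b_j}$ while resetting the $x$‑exponents — in each case the excess degree goes up by $1$. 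Hence each $\omega_{r,\ell}$ equals $D^{k}\omega_{0,1}$ plus an $\mathcal{O}_S$‑combination of $D^{k'}\omega_{0,1}$ with $k'<k$, where $k=k(r,\ell)$ runs bijectively over $\{0,1,\dots,n-1\}$; since $\{\omega_{0,1},D\omega_{0,1},\dots,D^{n-1}\omega_{0,1}\}$ is a basis (that is the content of the cyclic‑vector isomorphism \eqref{eq:cyclic-vector}), the change‑of‑basis matrix from $(D^k\omega_{0,1})_k$ to $(\omega_{r,\ell})_{r,\ell}$ is triangular with $1$'s on the diagonal, hence invertible over $\mathcal{O}_S$.

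The main point requiring care — and what I expect to be the genuine obstacle — is the bookkeeping that makes the "excess degree increases by exactly one" claim precise: one must fix the right base monomial so that every $g_{r,\ell}$ has nonnegative excess exponents $e_i,f_j$, verify that the definition of $s_r=\#\{i:\alpha_i<\beta_r\}$ is exactly what forces the $y_j$‑exponents $2d-b_j$ versus $d-b_j$ to be arranged so that no two distinct $(r,\ell)$ give the same excess degree, and check that applying $(D\mp\alpha_i)$ or $(D\mp\beta_j)$ (rather than $x_i^d$, $y_j^d$ directly) introduces only lower‑order error terms lying in the span already built — here the non‑resonance hypothesis enters, ensuring the scalars $\alpha_i,\beta_j$ never cause a degeneracy and that the relevant de Rham cohomology has the expected rank (via \cref{lem::Newton-Polygon-ineq-m=0}--\cref{lem::Newton-Polygon-ineq-n=m} and \cite[Thm.\,1.4]{adolphson1997twisted}). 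Once the filtration by excess degree is set up and shown compatible with the $D$‑action as above, independence is immediate; I would present the argument as: (i) count $=n$; (ii) exhibit the triangular relation $\omega_{r,\ell}\equiv D^{k(r,\ell)}\omega_{0,1}$ modulo lower terms via \cref{lem::computation-GaussManin}; (iii) invoke \eqref{eq:cyclic-vector} and conclude by triangularity, then Nakayama if one prefers to argue fiberwise.
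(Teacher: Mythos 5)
Your reduction to a spanning statement (rank $=n$ by the identification with $\Hyp(\alpha;\beta)$, count $\sum_r (s_{r+1}-s_r)=n$, then argue via the cyclic vector) matches the paper's high-level strategy, but the key step (ii) — the triangular relation $\omega_{r,\ell}\equiv D^{k(r,\ell)}\omega_{0,1}$ modulo lower terms, with the "excess degree'' increasing by exactly one as $(r,\ell)$ runs through its $n$ values — is false, and this is precisely where the real work lies. First, the degree bookkeeping fails at block transitions: passing from the last form of block $r$ to $\omega_{r+1,1}$ raises one $x$-exponent by $d$ \emph{and} lowers one $y$-exponent by $d$, so the total degree is unchanged; distinct indices can share the same degree. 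Concretely, for $n=2$, $m=1$ with $\alpha_2<\beta_1$ one has $g_{0,1}=x_2^{a_2-d}y_1^{2d-b_1}$ and $g_{1,1}=x_2^{a_2}y_1^{d-b_1}$, both of the same total degree, and neither equal to the bottom monomial $x_2^{a_2-d}y_1^{d-b_1}$ — so there is no bijection $k(r,\ell)\to\{0,\dots,n-1\}$ of the kind you assume. Second, and more structurally, \cref{lem::computation-GaussManin} only realizes \emph{multiplication} by $x_i^d$ or $y_j^d$ as an operator $(D\mp\cdot)$; forms such as $\omega_{1,1}$, which differ from $\omega_{0,1}$ by \emph{lowering} a $y$-exponent, are not obtained by applying a polynomial in $D$ to $\omega_{0,1}$ — the relation from the lemma points in the wrong direction, and inverting $(D+s_j/d)$ on cohomology is exactly what would need justification (in the rank-2 example one finds $\omega_{1,1}=\tfrac{1}{1-\beta_1}D\omega_{0,1}+A\,\omega_{0,1}$ only after such an inversion, and the leading coefficient is not $1$).

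The paper closes this gap with an elementary span-exchange lemma (\cref{lem::trick}): if $\omega_1=(D+s_{j_0})\omega_0$ and $\omega_2=(D-t_{i_0})\omega_0$ with $t_{i_0}\neq -s_{j_0}$, then $\omega_1-\omega_2$ is a nonzero multiple of $\omega_0$, so $\mathrm{span}(\omega_0,\omega_2)=\mathrm{span}(\omega_1,\omega_2)$; non-resonance is what guarantees the non-vanishing at each exchange. Iterating this in $m$ "walking'' steps, one replaces the $\omega_{r,\ell}$, without changing their span, by forms $\tilde\omega_1,\dots,\tilde\omega_n$ lying on a single chain $\tilde\omega_{i+1}=(D-1+\tfrac{a_{i+1}}{d})\tilde\omega_i$ based at the minimal monomial $\tilde\omega_1$ (a cyclic vector by \cref{r:cyclic-vector}), and only then does the span equal $\mathrm{span}(D^i\tilde\omega_1)$. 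Your proposal needs this exchange mechanism (or an equivalent use of non-resonance); as written, the filtration-by-excess-degree argument does not go through.
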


\begin{proof}
	It suffices to show that $\mathrm{span}(\omega_{r,\ell})=\mathrm{span}(D^i\omega\mid 0\leq i\leq n-1)$ for a cyclic vector $\omega$. 

	To a Laurent monomial $g=\prod x_i^{u_i} \prod y_j^{v_j}$ in variables $\{x_i\}_{i=2}^n$ and $\{y_j\}_{j=1}^m$, 
	we associate a lattice point $\point(g)=(u_2,\dots,u_n,v_1,\dots,v_m)\in \bb{Z}^{n+m-1}\subset \bb{R}^{n+m-1}$.
	If $\omega=g\cdot \eta$ is the product of a monomial $g$ with the differential form $\eta$, we set $\point(\omega):=\point(g)$ for the corresponding point. 
	
	Let $\pi_1$ and $\pi_2$ be the projections from $\bb{R}^{n+m-1}$ to $\bb{R}^{n-1}_{u_i}$ and $\bb{R}^m_{v_j}$ respectively. The for the differential forms $\omega_{r,\ell}$, we have  
	$$\pi_1(\point(\omega_{r,\ell}))=(a_2,\ldots,a_{s_r+\ell-1},a_{s_r+\ell}-d,\ldots,a_n-d)$$
and
	$$\pi_2(\point(\omega_{r,\ell}))=(d-\beta_1,\ldots,d-\beta_{r},2d-\beta_{r+1},\ldots,2d-\beta_m).$$
	It follows from Lemmas~\ref{lem::Newton-Polygon-ineq-m=0}, \ref{lem::Newton-Polygon-ineq-m-neq-0}, and \ref{lem::Newton-Polygon-ineq-n=m} that the fibers of all $\omega_{r,\ell}$ at $a\in S(\bb{C})$ lie in the cone $\bb{R}_{\geq 0}\cdot \Delta(f_a)$ 

Let $P_i$ and $Q_j$ be the points corresponding to monomials $x_i^d$ and $y_j^d$ respectively for $2\leq i\leq n$ and $1\leq j\leq m$.

\begin{lem}\label{lem::trick}
For a point $P\in \mathbb{Z}^{n+m-1}$ and two integers $2\leq i_0\leq n$ and $1\leq j_0\leq m$, let $\omega_0, \omega_1$ and $\omega_2$ be the corresponding differential forms of the points $P, P+Q_{j_0}$ and $P+P_{i_0}$ in $\mathbb{Z}^{n+m-1}$. If the $i_0$-th coordinate of $P$ is different from the negative of the $j_0$-th coordinate of $P$, then we have 
$$\mathrm{span}(\omega_0,\omega_2)=\mathrm{span}(\omega_1,\omega_2)\quad \textnormal{in } ~ \rr{H}^{n+m-1}_{\dR}(U/S,f)^{(G,\tilde\chi\times \rho)}.$$
\end{lem}
\begin{proof}
	Let $P$ be the point $(t_i,s_j)\in \mathbb{Z}^{n+m-1}$ and $\omega_0$ be the associated differential form. By the assumption, we have $t_i\neq - s_j$. Then
		$$\mathrm{span}(\omega_0,(D-t_{i_0})\omega_0)=\mathrm{span}((D-t_{i_0})\omega_0,(D+s_{j_0})\omega_0).$$
	At last, notice that we have $\omega_1=(D+s_{j_0})\omega_0$ and $\omega_2=(D-t_{i_0})\omega_0$ by Lemma~\ref{lem::computation-GaussManin} and Remark~\ref{rem::dual-hypergeometric}.
\end{proof}

\noindent \textbf{Step 1}: For $1\leq \ell\leq s_1-s_0$, we replace the differential forms $\omega_{0,\ell}$ by differential forms $\omega_{0,\ell}^{(1)}$ of the form $g\cdot \eta$, such that
	 	$$\point(\omega_{0,\ell}^{(1)})=\point(\omega_{0,\ell})-Q_1.$$
	In other words, we keep the first $n-1$ coordinates unchanged and change the last $m$ coordinates to
		$$(d-\beta_1,2d-\beta_{2},\ldots,2d-\beta_m).$$
	We also put $\omega_{r,\ell}^{(1)}=\omega_{r,\ell}$ for $r\geq 1$. Then use Lemma~\ref{lem::trick}, we check that 
		$$\operatorname{Span}(\omega_{r,\ell}^{(1)}\mid r,\ell)=\operatorname{Span}(\omega_{r,\ell}\mid r,\ell).$$
\noindent\textbf{Step $i\geq2$}: Assume that we have already obtained elements $\omega_{r,\ell}^{(i-1)}$ for $i\geq 2$. Let $\omega_{r,\ell}^{(i)}$ be differential forms of the form $g\cdot \eta$, such that 
		$$
			\point(\omega_{r,\ell}^{(i)})=
			\begin{cases}
				\point(\omega_{r,\ell}^{(i-1)})-Q_i & \textnormal{ if } r\leq i-1,\\
				\point(\omega_{r,\ell}^{(i-1)}) & \textnormal{ if }  i\leq r\leq m,
			\end{cases}
		$$
	i.e., when $r\leq i-1$, we keep the first $n-1$ coordinates of $\point(\omega_{r,\ell}^{(i-1)})$ unchanged, and replace the last $m$ coordinates of $\point(\omega_{r,\ell}^{(i-1)})$ by
		$$(d-\beta_1,\ldots, d-\beta_{i},2d-\beta_{i+1},\ldots,2d-\beta_m).$$
	Then by Lemma~\ref{lem::trick}, we check that 
		$$\operatorname{Span}(\omega_{r,\ell}^{(i)}\mid r,\ell)=\operatorname{Span}(\omega_{r,\ell}^{(i-1)}\mid r,\ell)=\operatorname{Span}(\omega_{r,\ell}\mid r,\ell).$$
	
\noindent\textbf{After Step $m$}: After $m$ steps, we get $\omega_{r,\ell}^{(m)}$ such that 
	$$
		\point(\omega_{r,\ell}^{(m)})=(a_2,\ldots,a_{s_r+\ell-1},a_{s_r+\ell}-d,\ldots,a_n-d,d-\beta_1,\ldots,d-\beta_m).
	$$
Note that the there is a bijection between $\{(s_r,\ell)\}_{0\le r \le m, 1\le \ell\le s_{r+1}-s_r}$ and $\{1,\cdots,n\}$ by sending $(r,\ell)$ to $s_r+\ell-1$. We set $\tilde \omega_{s_r+\ell-1}=\omega_{r,\ell}^{(m)}$ via this map. Then by Lemma~\ref{lem::computation-GaussManin}, we have $\tilde \omega_{i+1}=(D-1+\frac{a_{i+1}}{d})\tilde \omega_i$ for $1\leq i\leq n-1$. It follows that 	
\begin{eqnarray*}	
		\operatorname{Span}(D^i\tilde \omega_1\mid 0\leq i\leq n-1)
		&=& \operatorname{Span}(\tilde \omega_{i}\mid 1\leq i\leq n) \\
		&=& \operatorname{Span}(\omega_{r,\ell}^{(m)}\mid r,\ell) \\
		&=& \operatorname{Span}(\omega_{r,\ell}\mid r,\ell).
	\end{eqnarray*}
By Proposition \ref{prop::geometric-realization} and Remark \ref{r:cyclic-vector}, $\widetilde{\omega}_1$ is a cyclic vector, from which we showed that $\{\omega_{r,\ell}\}_{r,\ell}$ form a basis. This finishes the proof. 
\end{proof}

\subsection{Calculation of the irregular Hodge filtration}

Recall that a non-resonant hypergeometric connection is rigid. Hence, it underlies an irregular mixed Hodge module on $\bb{P}^1$ \cite[Thm.\,0.7]{Sabbah18irregular}, and therefore, admits a unique irregular Hodge filtration $F_{\irr}^{\bullet}$ (up to a shift). 
When $n=m$, the irregular mixed Hodge module structure coincides with the variation of Hodge structures on $\Hyp(\alpha,\beta)$.

\begin{thm}\label{thm:Hodge-fil}
	Assume $(\alpha,\beta)$ is non-resonant. 
	\begin{enumerate}[label=(\arabic*).]
		\item Via the isomorphism $ \Hyp(\alpha,\beta)\simeq \mathcal{H}^{n+m-1}_{\mathrm{dR}}(U/S,f)^{(G,\tilde\chi\times \rho)}$, the irregular Hodge filtration on $\Hyp(\alpha,\beta)$ can be identified with the following filtration of sub-bundles:
			\[
				F^p_{\rr{irr}}\mathcal{H}^{n+m-1}_{\mathrm{dR}}(U/S,f)^{(G,\tilde\chi\times \rho)}
				=\bigoplus_{n+m-1-w(\omega_{r,s})\geq p}\omega_{r,s}\cc{O}_S.
			\]

		\item Up to an $\bb{R}$-shift, the jumps of the irregular Hodge filtration on $\Hyp(\alpha,\beta)$ occur at $\theta(k)$ (Theorem~\ref{thm::hodge-number}) and for any $p\in \bb{R}$ we have
		$$\rr{rk}\,\mathrm{gr}^p_{F_{\irr}}\Hyp(\alpha;\beta) = \#\theta\inv(p).$$
	\item The irregular Hodge filtration satisfies the Griffiths' transversality \footnote{A general statement about the Griffiths' transversality on irregular Hodge filtration is discussed in \cite[Rem.\,6.3]{Sabbah-Yu-2015}.}, that is, 
		$$\nabla (F^p_{\rr{irr}} \Hyp(\alpha,\beta)) \subset \Omega_S^1\otimes F^{p-1}_{\rr{irr}} \Hyp(\alpha,\beta),\quad \forall p\in \mathbb{R}.$$
	\end{enumerate}
\end{thm}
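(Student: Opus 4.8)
The plan is to reduce everything to the fiberwise statement of Theorem~\ref{thm::hodge-intro2} (i.e.\ Theorem~\ref{thm::hodge-number}) together with the combinatorial description of the Newton polyhedron filtration in \eqref{eq:newton-monomial}, and then to bootstrap from the fibers to the bundle using the general compatibility between the irregular Hodge filtration on a rigid connection and its fiberwise avatar. First I would establish part~(1). By Proposition~\ref{prop::geometric-realization}, the connection $\Hyp(\alpha,\beta)$ is identified with $\mathcal{H}^{n+m-1}_{\mathrm{dR}}(U/S,f)^{(G,\tilde\chi\times\rho)}$, and by Proposition~\ref{p:coh basis} the classes $\omega_{r,\ell}$ form an $\mathcal{O}_S$-basis. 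Since each $f_a$ is non-degenerate with respect to $\Delta(f_a)$ for all $a\in S(\bb C)$ (Lemmas~\ref{lem::Newton-Polygon-ineq-m=0}--\ref{lem::Newton-Polygon-ineq-n=m}), the irregular Hodge filtration on each fiber agrees with the Newton polyhedron filtration, which by \eqref{eq:newton-monomial} puts the monomial class $\omega_{r,\ell}$ in $F^{n+m-1-w(\omega_{r,\ell})}_{\mathrm{NP}}$. The key point is that $w(\point(\omega_{r,\ell}))$ is independent of $a$: indeed the face inequalities defining $\Delta(f_a)$ in \eqref{eq:h_k-1}, \eqref{eq:h_k-2}, \eqref{eq:h_k-3} do not involve $a$, so $w$ is a genuinely combinatorial quantity. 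Hence the $\mathcal{O}_S$-submodules $\bigoplus_{n+m-1-w(\omega_{r,\ell})\ge p}\omega_{r,\ell}\mathcal{O}_S$ are sub-bundles, and their restriction to each fiber is exactly the fiberwise irregular Hodge filtration. By the compatibility of the irregular Hodge filtration with non-characteristic restriction (cf.\ \cite{Sabbah18irregular}, or the discussion in \S\ref{subsec::emhs} via \cite{Sabbah-2010-laplace,Sabbah-Yu-2015}) and the fact that a sub-bundle of a bundle on a curve is determined by its fibers, this forces the identification in~(1).

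Part~(2) is then immediate: once~(1) is known, $\mathrm{gr}^p_{F_{\mathrm{irr}}}\Hyp(\alpha,\beta)$ is free of rank $\#\{(r,\ell)\mid n+m-1-w(\omega_{r,\ell})=p\}$, and the computation carried out in the proof of Theorem~\ref{thm::hodge-number} matches this count with $\#\theta^{-1}(p)$ after the stated $\mathbb R$-shift; I would simply cite that computation. So the only substantive content beyond Theorem~\ref{thm::hodge-number} is the passage from fibers to the bundle in part~(1) and the Griffiths transversality in part~(3).

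For part~(3), I would argue directly using part~(1) and Lemma~\ref{lem::computation-GaussManin}. The connection operator $D=\nabla_{z\partial_z}$ acts on the basis by the explicit formulas $(D-t_i/d)\tilde\omega=x_i^d\tilde\omega$, $(D+s_j/d)\tilde\omega=y_j^d\tilde\omega$, i.e.\ applying $D$ to a monomial class either rescales it or multiplies it by $x_i^d$ or $y_j^d$. Multiplication by $x_i^d$ (resp.\ $y_j^d$) translates the lattice point $\point(\omega)$ by $P_i$ (resp.\ $Q_j$), and since $P_i,Q_j\in\Delta(f_a)$ one has $w(\point(\omega)+P_i)\le w(\point(\omega))+1$ and likewise for $Q_j$ — this is just the triangle-inequality/subadditivity of the weight function $w$ with respect to the Minkowski structure of the polytope. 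Therefore $D$ sends $F^p_{\mathrm{irr}}$ into $F^{p-1}_{\mathrm{irr}}$, which, after clearing the $\frac{\mathrm{d}z}{z}$, is exactly Griffiths transversality for $\nabla$. One must be slightly careful because a general class in $F^p_{\mathrm{irr}}$ is an $\mathcal{O}_S$-linear combination of the $\omega_{r,\ell}$ with possibly $z$-dependent coefficients, so $\nabla$ also contributes a $\frac{\mathrm{d}z}{z}\otimes z\partial_z(\text{coeff})$ term; but that term stays inside $F^p_{\mathrm{irr}}\subset F^{p-1}_{\mathrm{irr}}$ since $z\partial_z$ preserves $\mathcal{O}_S$, so only the $D$-action on the basis vectors matters, and that is handled above.

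The main obstacle I anticipate is making the fiber-to-bundle argument in part~(1) fully rigorous: one needs to know that the irregular Hodge filtration on the global connection $\Hyp(\alpha,\beta)$ (as an irregular mixed Hodge module on $\mathbb P^1$, via \cite{Sabbah18irregular}) restricts to the irregular Hodge filtration on each fiber, and that the candidate filtration by sub-bundles is the right one on a Zariski-dense set of fibers. The cleanest route is probably to invoke the EMHS formalism of \S\ref{subsec::emhs}: the de Rham fiber of $E(a;\alpha;\beta)$ carries the irregular Hodge filtration compatibly with \cite{yu2012irregular}, and the family version is controlled by the relative construction; combined with the uniqueness of the irregular Hodge filtration on a rigid connection up to shift (so it suffices to match gradeds fiberwise, which is Theorem~\ref{thm::hodge-number}), this pins down the filtration on the bundle. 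If one prefers to avoid the full family statement, one can alternatively note that a filtration of a locally free $\mathcal{O}_S$-module by sub-bundles is determined by its restriction to any single fiber together with the requirement of being $\nabla$-related in the Griffiths sense — but the family EMHS approach is more robust and is already set up in the paper.
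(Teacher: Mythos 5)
Your proposal is correct and follows essentially the same route as the paper: reduce part (1) to the fiberwise statement of Theorem~\ref{thm::hodge-number} via the $\mathcal{O}_S$-basis $\omega_{r,\ell}$ and the $a$-independence of the weights, deduce (2) immediately, and prove (3) by the estimate $w(\nabla_{z\partial_z}\omega_{r,\ell})\leq 1+w(\omega_{r,\ell})$ coming from Lemma~\ref{lem::computation-GaussManin} and \eqref{eq:newton-monomial}. The fiber-to-bundle compatibility you flag as the main obstacle is exactly what the paper supplies by citing \cite[Prop.\,3.54]{Sabbah18irregular} and \cite[Prop.\,11.22]{Mochizuki2021}, i.e.\ $(F^{\bullet}_{\rr{irr}}\Hyp(\alpha,\beta))_a=F^{\bullet}_{\rr{irr}}(\Hyp(\alpha,\beta)_a)$, so your sketch is complete once that reference is made precise.
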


\begin{rem}
	Inspired by the Griffiths' transversality, we expect that there exist oper structures on the hypergeometric connections, which refine the irregular Hodge filtrations. 
	An oper structure plays an essential role in the geometric Langlands correspondence \cite{BD,Zhu,KXY}. 
\end{rem}

To prove the above theorem, we study the Hodge numbers of the irregular Hodge filtration on fibers as explained in \cref{s:idea}. 

\begin{thm}\label{thm::hodge-number}
	Up to an $\bb{R}$-shift, the jumps of the irregular Hodge filtration $F^\bullet_{\rr{irr}}$ on the fiber $\Hyp(\alpha;\beta)_a$ occur at
		$$\theta(k):= (n-m)\alpha_k+\#\{i\mid \beta_i< \alpha_k\}+(n-k)-\sum_{i=1}^n \alpha_i+\sum_{j=1}^m\beta_j$$
	for $1\leq k\leq n$. Moreover, we have
		$\operatorname{dim}\mathrm{gr}^p_{F_{\irr}}\Hyp(\alpha;\beta)_a = \#\theta\inv(p)$ for any $p\in \bb{R}$.
\end{thm}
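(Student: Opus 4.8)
The plan is to combine the geometric interpretation from \cref{prop::geometric-realization-convolution} with Yu's computation of the irregular Hodge filtration on twisted de Rham cohomology of a torus, and then carry out an explicit combinatorial count. First I would fix $a\in S(\bb{C})$ and recall that, by \cref{prop::geometric-realization-convolution} and base change, the fiber $\Hyp(\alpha;\beta)_a$ (after the usual reduction to $\alpha_1=0$, $\lambda=1$ via \eqref{eq:reduce-to-alpha1=0} and \eqref{eq:coeff-change}) is identified with the $(G,\tilde\chi\times\rho)$-isotypic component of $\Hdr{n+m-1}(\bb{G}_m^{n+m-1},f_a)$, where $f_a$ is the Laurent polynomial from \eqref{eq::Laurent-polynomial}. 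By \cref{lem::Newton-Polygon-ineq-m=0}, \cref{lem::Newton-Polygon-ineq-m-neq-0}, and \cref{lem::Newton-Polygon-ineq-n=m}, $f_a$ is non-degenerate with respect to $\Delta(f_a)$ (for $a\neq 1$ in the case $n=m$), so by \cite[Thm.\,1.4]{adolphson1997twisted} and \cite[Thm.\,4.6]{yu2012irregular} the only nonzero cohomology is in degree $n+m-1$ and the irregular Hodge filtration there coincides with the Newton polyhedron filtration $F^\bullet_{\rr{NP}}$, which by \eqref{eq:newton-monomial} is governed by the weight function $w(Q)$ on lattice points $Q\in\bb{R}_{\geq0}\Delta(f_a)$.

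Next I would use \cref{p:coh basis}: the classes $\omega_{r,\ell}$ for $0\le r\le m$, $1\le \ell\le s_{r+1}-s_r$ form an $\cc{O}_S$-basis of the isotypic component, and these classes are monomials times $\eta$ whose associated lattice points $\point(\omega_{r,\ell})$ I already know explicitly from the proof of \cref{p:coh basis}. Since $F^\bullet_{\rr{NP}}$ is (by \eqref{eq:newton-monomial}) the filtration whose graded pieces are spanned by monomials, and the $\omega_{r,\ell}$ are themselves monomials, the Hodge numbers are computed by evaluating $w(\point(\omega_{r,\ell}))$ for each $(r,\ell)$; concretely $\omega_{r,\ell}\in F^{n+m-1-w(\point(\omega_{r,\ell}))}_{\rr{irr}}\Hyp(\alpha;\beta)_a$, so the jump associated with $\omega_{r,\ell}$ sits at $p = n+m-1 - w(\point(\omega_{r,\ell}))$. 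The task is then purely combinatorial: compute $w(\point(\omega_{r,\ell}))$ using the explicit inequalities $h_{i_0}\le d$, $h_{n+1}\le d$ (and $u_i+v_j\ge0$, $v_j\ge0$) that cut out $\Delta(f_a)$ in \cref{lem::Newton-Polygon-ineq-m=0}(2), \cref{lem::Newton-Polygon-ineq-m-neq-0}(2), \cref{lem::Newton-Polygon-ineq-n=m}(2). That is, $w(Q)$ is the maximum of $\tfrac{1}{d}h_\bullet(Q)$ over the relevant facet functionals $h_\bullet$, and one plugs in $Q=\point(\omega_{r,\ell})$, whose first $n-1$ coordinates are $(a_2,\dots,a_{s_r+\ell-1},a_{s_r+\ell}-d,\dots,a_n-d)$ and whose last $m$ coordinates are $(d-b_1,\dots,d-b_r,2d-b_{r+1},\dots,2d-b_m)$.

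Carrying out this evaluation: for a fixed $(r,\ell)$ set $k=s_r+\ell-1$ (so that $k$ ranges bijectively over $\{1,\dots,n\}$ as $(r,\ell)$ ranges over the index set, by the bijection noted in the proof of \cref{p:coh basis}), and one checks by inspecting which facet functional is maximized that $w(\point(\omega_{r,\ell})) = n+m-1 - \theta(k) + C$ for a constant $C$ independent of $k$ (this constant is the normalization shift, coming from $-\sum\alpha_i+\sum\beta_j$ and the $2d$-versus-$d$ bookkeeping on the $y_j$). The identity to verify is exactly that $\tfrac1d$ times the maximizing functional, applied to $\point(\omega_{r,\ell})$, equals $\,(n-m)\alpha_k+\#\{i:\beta_i<\alpha_k\}+(n-k)$ plus the universal constant; here the term $(n-m)\alpha_k$ comes from the $-(n-m)u_{i_0}$ appearing in $h_{i_0}$ \eqref{eq:h_k-1}, \eqref{eq:h_k-2}, the term $\#\{i:\beta_i<\alpha_k\}$ records which block index $r$ the index $k$ falls into (since $s_r=\#\{i:\alpha_i<\beta_r\}$), and $(n-k)$ counts how many $x_i$-exponents have been lowered by $d$. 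Because the basis is defined over $\cc{O}_S$ and the weights $w(\point(\omega_{r,\ell}))$ are visibly independent of $a$, the Hodge numbers $\#\theta\inv(p)$ do not depend on $a\in S(\bb{C})$, giving the "moreover" statement. The main obstacle is the last step: correctly identifying, for each $k$, which of the $n$ facet functionals $h_2,\dots,h_n,h_{n+1}$ (and, when $m\neq0$, checking that the constraints $u_i+v_j\ge0$, $v_j\ge0$ do not affect $w$) attains the maximum at $\point(\omega_{r,\ell})$, and then matching the resulting piecewise-linear expression to the closed form $\theta(k)$ — this requires a careful case analysis according to the position of $\alpha_k$ relative to the $\beta_j$'s, and is where the $2d-b_j$ shifts in the definition of $g_{r,\ell}$ are engineered precisely so that the answer is uniform in $a$ and collapses to \eqref{eq:theta(k)}.
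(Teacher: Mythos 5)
Your reduction to the fiberwise twisted de Rham cohomology, the use of non-degeneracy to replace $F^\bullet_{\rr{irr}}$ by the Newton polyhedron filtration, and the evaluation of the weights $w(\point(\omega_{r,\ell}))$ against the facet functionals $h_\bullet$ all match the paper's argument (this last computation is exactly Lemma~\ref{lem::Newton-weight}, and your relabeling $k=s_r+\ell-1$ versus the paper's $\theta(s_r+\ell)$ with the wrap-around convention is harmless, as is your unspecified constant, since the statement is only up to an $\bb{R}$-shift). However, there is a genuine gap at the decisive step: from \eqref{eq:newton-monomial} you only get the inclusion $\omega_{r,\ell}\in F^{\,n+m-1-w(\omega_{r,\ell})}_{\rr{irr}}$, i.e.\ that the span $G^p$ of the basis elements with $n+m-1-w\geq p$ is \emph{contained} in $F^p_{\rr{irr}}$. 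Your assertion that ``the filtration is spanned by monomials, and the $\omega_{r,\ell}$ are monomials, so the Hodge numbers are computed by evaluating $w(\point(\omega_{r,\ell}))$'' does not follow: $F^p_{\rr{NP}}$ is spanned by the cohomology classes of \emph{all} monomial forms of weight $\leq n+m-1-p$ in the isotypic component, and such a class, when expanded in the chosen basis, may involve $\omega_{r,\ell}$'s of larger weight; equivalently, nothing you have said rules out $\dim F^p_{\rr{irr}} > \dim G^p$, i.e.\ that your basis is actually adapted to the filtration.

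The paper closes exactly this gap by a duality argument, which is absent from your proposal. One introduces the auxiliary filtration $G^\bullet$ \eqref{eq:auxiliary-fil}, observes the symmetry $\theta(k)+\bar\theta(\sigma(k))=n+m-1$ between the parameters $(\alpha,\beta)$ and their conjugates $(\bar\alpha,\bar\beta)$ of \eqref{eq::conjugate-sequence-alpha} (Lemma~\ref{lem:dual-dimension-G}), and then uses the duality of exponential mixed Hodge structures (Proposition~\ref{prop::EMHS-properties}, which requires non-resonance and the forget-support isomorphism of Corollary~\ref{cor::forget-support-iso}) together with Yu's duality theorem for irregular Hodge filtrations \cite[Thm.\,2.2]{yu2012irregular} to get $h^{\pm}_p(\alpha,\beta)=h^{\mp}_{n+m-1-p}(\bar\alpha,\bar\beta)$. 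Combining the one-sided inclusion $G^p\subseteq F^p_{\rr{irr}}$ for both $(\alpha,\beta)$ and $(\bar\alpha,\bar\beta)$ with this duality squeezes the dimensions and forces $G^\bullet=F^\bullet_{\rr{irr}}$ (Lemma~\ref{l:two filtrations}). Without this (or an alternative argument, e.g.\ showing that the monomials $g_{r,\ell}$ descend to a weight-homogeneous basis of the graded quotient computing $\gr_{F_{\rr{NP}}}$, in the spirit of Adolphson--Sperber), your count $\dim\gr^p_{F_{\irr}}=\#\theta^{-1}(p)$ is only an inequality in one direction.
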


\subsubsection{Proof of Theorem \ref{thm::hodge-number}}
\begin{proof}	We may assume $\alpha_1=0$ by \eqref{eq:reduce-to-alpha1=0}. By Proposition~\ref{prop::geometric-realization-convolution} and Definition~\ref{defn::hypergeometric-EMHS}, we have 
\begin{eqnarray} \label{eq:filtrations fiber}
			F_{\irr}^\bullet \Hyp(\alpha;\beta)_a &\simeq & F_{\irr}^\bullet\mathrm{H}^{n+m-1}_{\mathrm{dR}}(\bb{G}_m^{n+m-1},f_a)^{(G,\tilde\chi\times \rho)}\\
			&\simeq&  F_{\irr}^\bullet\mathrm{H}^{n+m-1}_{\mathrm{dR}}(\bb{G}_m^{n+m-1},-f_{(-1)^{n-m}a})^{(G,\tilde\chi\times \rho)},\nonumber
	\end{eqnarray}
	where $\tilde \chi$ and $\rho$ are products of characters corresponding to $\alpha_i$ and $\beta_j$ from \eqref{eq:char}. So it suffices to compute the irregular Hodge filtration on the twisted de Rham cohomologies $\mathrm{H}^{n+m-1}_{\mathrm{dR}}(\bb{G}_m^{n+m},\pm f_{ a})^{(G,\tilde\chi\times \rho)}$. Since $f_a$ is non-degenerate with respect to $\Delta(f_a)$, we can compute the filtration in terms of Newton polyhedron filtration. 

	Let $\omega_{r,\ell}$ be the basis of $\Hyp(\alpha;\beta)_a$ from Proposition \ref{p:coh basis}. Recall that $w(\omega_{r,\ell})$ is the minimal positive real number $w$ such that $\point(g_{r,\ell})\in w\cdot \Delta(f_a)$. It follows from \eqref{eq:newton-monomial} that
		$$\omega_{r,\ell} \in F_{\irr}^{n+m-1-w(\omega_{r,\ell})} \mathrm{H}^{n+m-1}_{\mathrm{dR}}(\bb{G}_m^{n+m-1},\pm f_a)^{(G,\tilde\chi\times \rho)}.$$

	We consider an auxiliary filtration 	
		$G^\bullet$ on $\HH^{n+m-1}_{\mathrm{dR}}(\bb{G}_m^{n+m-1},\pm f_a)^{(G,\tilde\chi\times \rho)}$ defined by
		\begin{equation}\label{eq:auxiliary-fil}
			G^p:=\operatorname{span}\{\omega_{r,\ell}\mid n+m-1- w(\omega_{r,\ell})\geq p\}.
		\end{equation}
	By the following lemmas, $F^\bullet$ coincides with $G^\bullet$, which finishes the proof of the theorem. 
\end{proof}
	
	\begin{lem}\label{lem::Newton-weight}
		We set $\theta(n+1)=\theta(1)$. For $0\le r\le m$, $1\le \ell\le s_{r+1}-s_r$, we have 
	\[
		n+m-1-w(\omega_{r,\ell})=\theta(s_r+\ell).
	\]
	\end{lem}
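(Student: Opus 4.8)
The plan is to compute $w(\omega_{r,\ell})$ directly from the explicit description of the Newton polytope $\Delta(f_a)$ provided in Lemmas~\ref{lem::Newton-Polygon-ineq-m=0}--\ref{lem::Newton-Polygon-ineq-n=m}, and then match the answer to $\theta(s_r+\ell)$ via a routine algebraic manipulation. Recall that for a lattice point $Q \in \mathbb{R}_{\geq 0}\Delta(f_a)$, the weight $w(Q)$ is the smallest $w>0$ with $Q \in w\cdot \Delta(f_a)$; since $\Delta(f_a)$ is cut out by the inequalities $h_{i_0} \leq d$ (for the faces not through the origin, where $h_{i_0}$ runs over the linear functionals appearing in \eqref{eq:h_k-1}, \eqref{eq:h_k-2}, \eqref{eq:h_k-3}) together with the inequalities through the origin ($u_i+v_j \geq 0$, $v_j \geq 0$, which are homogeneous and hence impose no constraint on $w$), we get the clean formula
$$
	w(Q) = \frac{1}{d}\,\max_{i_0}\, h_{i_0}(Q),
$$
the maximum being over all the functionals $h_{n+1}, h_2, \ldots, h_n$ (and only $h_{n+1}$ when $n=m$). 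So the first step is to record this formula, checking that the point $\point(\omega_{r,\ell})$ indeed lies in the cone $\mathbb{R}_{\geq 0}\Delta(f_a)$ (already noted in the proof of Proposition~\ref{p:coh basis}).

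The second step is the bookkeeping: plug $Q = \point(g_{r,\ell})$ into each $h_{i_0}$. From the formulas for $\pi_1(\point(\omega_{r,\ell}))$ and $\pi_2(\point(\omega_{r,\ell}))$ in the proof of Proposition~\ref{p:coh basis}, one has $\sum_i u_i = \sum_{i=2}^n a_i - (n - s_r - \ell + 1)d$ and $\sum_j v_j = md + \sum_{j=1}^m(d-b_j) + (\text{correction from the } 2d \text{ vs } d \text{ exponents})$; being careful, $\sum_j v_j = \sum_{j=1}^r (d - b_j) + \sum_{j=r+1}^m (2d - b_j)$. Summing, $h_{n+1}(Q) = \sum u_i + \sum v_j$, and the functionals $h_{i_0}(Q) = h_{n+1}(Q) - (n-m)u_{i_0}$ differ only in the subtracted term $(n-m)u_{i_0}$, which is $(n-m)(a_{i_0} - d)$ if $i_0 \geq s_r+\ell$ and $(n-m)a_{i_0}$ if $i_0 \leq s_r + \ell - 1$. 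Hence the maximum over $i_0$ of $-(n-m)u_{i_0}$ is attained at the index $i_0$ minimizing $u_{i_0}$, and one checks (using that $\alpha$ is non-decreasing, and comparing $a_{i_0}-d$ for large $i_0$ against $a_{i_0}$ for small $i_0$) that the minimizing index is $k := s_r + \ell$ itself, with $u_k = a_k - d$ — unless $h_{n+1}$ (i.e. the $R$-free face) already dominates. Collecting terms and dividing by $d$ should yield exactly $n + m - 1 - w(\omega_{r,\ell}) = \theta(s_r+\ell)$ after substituting $\alpha_i = a_i/d$, $\beta_j = b_j/d$ and using $\#\{i \mid \beta_i < \alpha_k\} = \#\{i \mid \alpha_i > \beta_i\}$-type reindexing together with the definition $s_r = \#\{i : \alpha_i < \beta_r\}$ to convert the count $(n - k)$ and the $s_r$-dependence into the combinatorial term $\#\{i \mid \beta_i < \alpha_k\}$ in \eqref{eq:theta(k)}.

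The third step handles the cases $m=0$ and $n=m$ separately only insofar as the list of functionals changes (no $v_j$ variables when $m=0$; only $h_{n+1}$ when $n=m$), but the computation is the same shape; I would present the general $n > m \geq 0$ computation and remark that the $n=m$ case is the special case where all $h_{i_0}$ with $i_0 \leq n$ are absent. Finally, one should double-check the boundary convention $\theta(n+1) = \theta(1)$, which corresponds to $(r,\ell) = (m, s_{m+1} - s_m)$ giving $s_r + \ell = n+1$; here $k = n+1$ is out of range, and the claim is that the functional realizing the maximum is instead $h_{n+1}$ (the face avoiding $R$), producing the value $\theta(1)$ — this matches because at $k = n+1$ the "would-be" index $u_k$ does not exist, so the $h_{i_0}$ with $i_0 \leq n$ cannot beat $h_{n+1}$.

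The main obstacle is purely the combinatorial identification in the second step: one must carefully track which index $i_0$ maximizes $h_{i_0}(\point(g_{r,\ell}))$ as a function of $(r,\ell)$ — the subtlety being the interplay between the non-decreasing order of the $\alpha_i$'s, the shift by $d$ in the exponents $a_{s_r+\ell}, \ldots, a_n$, and the definition of $s_r$ via $\#\{i : \alpha_i < \beta_r\}$ — and then verify that the resulting expression collapses to the closed form $\theta(k)$. No conceptual difficulty is expected, but the reindexing is error-prone and deserves a clean, explicit case split on whether the dominating functional is $h_{n+1}$ or some $h_{i_0}$ with $i_0 \leq n$.
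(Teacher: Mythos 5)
Your proposal is correct and takes essentially the same route as the paper: both compute $w(\omega_{r,\ell})$ as $\max_k h_k(\point(g_{r,\ell}))/d$ from the facet inequalities of Lemmas~\ref{lem::Newton-Polygon-ineq-m=0}--\ref{lem::Newton-Polygon-ineq-n=m}, identify the dominating functional as $h_{s_r+\ell}$ (with $h_{n+1}$ in the boundary case $s_r+\ell=n+1$, matching the convention $\theta(n+1)=\theta(1)$), and then match the resulting expression with $\theta(s_r+\ell)$ using $\#\{j\mid\beta_j<\alpha_{s_r+\ell}\}=r$. One simplification available to you: for $n>m$ and $s_r+\ell\le n$ one has $u_{s_r+\ell}=a_{s_r+\ell}-d<0$, so $h_{s_r+\ell}\ge h_{n+1}$ automatically and the hedge ``unless $h_{n+1}$ dominates'' is only needed in the boundary case.
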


	\begin{lem}\label{lem:dual-dimension-G}
		For $0\leq p\leq n+m-1$, we have
			$$\scalemath{0.95}{\dim \gr_G^{p}\mathrm{H}^{n+m-1}_{\mathrm{dR}}(\bb{G}_m^{n+m-1},\pm f_a)^{(G,\tilde\chi\times \rho)}
			=\dim \gr_G^{n+m-1-p}\mathrm{H}^{n+m-1}_{\mathrm{dR}}(\bb{G}_m^{n+m-1},\mp f_a)^{(G,\tilde \chi\inv\times\rho\inv)}.}$$
	\end{lem}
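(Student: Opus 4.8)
The plan is to deduce the stated symmetry of the auxiliary filtration $G^\bullet$ from the duality between the exponential mixed Hodge structures ${E}_c(a;\alpha;\beta)$ and ${E}((-1)^{n-m}a;\bar\alpha;\bar\beta)$ established in Proposition~\ref{prop::EMHS-properties}(1), combined with the fact (Lemma~\ref{lem::Newton-weight}, whose proof is independent) that the jumps of $G^\bullet$ on $\mathrm{H}^{n+m-1}_{\mathrm{dR}}(\bb{G}_m^{n+m-1},\pm f_a)^{(G,\tilde\chi\times\rho)}$ occur exactly at the values $\theta(k)$ for $k=1,\dots,n$ with multiplicities $\#\theta\inv(p)$. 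First I would record that, by Proposition~\ref{prop::EMHS-properties}(2), in the non-resonant case $G^\bullet$ lives on a de Rham fiber which agrees (up to the shift and the sign change $f_a \leftrightarrow -f_a$, $a\leftrightarrow (-1)^{n-m}a$ from \eqref{eq:filtrations fiber}) with that of the dual EMHS; the point is then purely numerical: one checks that the multiset $\{\theta(k)\}_{k=1}^n$ attached to $(\alpha,\beta)$ and the multiset attached to $(\bar\alpha,\bar\beta)$ are related by $p \mapsto (n+m-1)-p$.

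Concretely, the key step is the combinatorial identity
\[
	\bar\theta(k) = (n+m-1) - \theta(n+1-k), \qquad 1\le k\le n,
\]
where $\bar\theta$ is the map \eqref{eq:theta(k)} built from the conjugate sequences $\bar\alpha,\bar\beta$ of \eqref{eq::conjugate-sequence-alpha}. This is a direct substitution: using $\bar\beta_j = 1-\beta_j$, $\sum\bar\beta_j = m - \sum\beta_j$, and (for indices past $t$) $\bar\alpha_k = 1-\alpha_{n+t+1-k}$, one rewrites each of the four terms $(n-m)\bar\alpha_k$, $\#\{i\mid \bar\beta_i<\bar\alpha_k\}$, $(n-k)$, and $-\sum\bar\alpha_i+\sum\bar\beta_j$ in terms of the original data. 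The term $\#\{i\mid\bar\beta_i<\bar\alpha_k\}$ becomes $\#\{i\mid \beta_i > \alpha_{n+t+1-k}\} = m - \#\{i\mid \beta_i < \alpha_{n+t+1-k}\}$ (equality $\beta_i=\alpha_j$ being excluded by non-resonance after reduction to $[0,1)$, and the $\alpha_k=0$ part of the sequence handled by the re-indexing in \eqref{eq::conjugate-sequence-alpha}); the reversal $k\mapsto n+1-k$ in the index of $\theta$ matches the order-reversal built into $\bar\alpha$. Summing the four contributions and collecting constants yields exactly $(n+m-1)-\theta(n+1-k)$. Since $G^\bullet$ for $(-f_a, \tilde\chi\inv\times\rho\inv)$ is the filtration whose jumps are the $\bar\theta(k)$'s with multiplicity (this being Lemma~\ref{lem::Newton-weight} applied to the conjugate data, via Remark~\ref{rem::dual-hypergeometric} which provides the cyclic vector and basis for $-f$), the displayed dimension identity follows term by term: $\dim\gr_G^p$ on one side counts $\#\theta\inv(p)$ and $\dim\gr_G^{n+m-1-p}$ on the other counts $\#\bar\theta\inv(n+m-1-p) = \#\theta\inv(p)$.

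The main obstacle I anticipate is bookkeeping around the block of zero $\alpha_i$'s and the precise definition \eqref{eq::conjugate-sequence-alpha} of $\bar\alpha$: one must verify that the re-indexing $k\mapsto n+t+1-k$ on the non-zero part and the identity $\bar\alpha_k=0$ on the first $t$ slots together produce a clean order-reversal compatible with the counting term $\#\{i\mid\beta_i<\alpha_k\}$, and that the ceiling/floor subtleties hiding in ``up to an $\bb{R}$-shift'' do not spoil the exact self-duality. A secondary point to be careful about is that Lemma~\ref{lem::Newton-weight} must genuinely be available for \emph{both} sign choices $\pm f_a$ and both character twists; this is exactly what Remark~\ref{rem::dual-hypergeometric} and Remark~\ref{rem::dual-mod-hypergeometric} guarantee, so I would invoke them explicitly rather than re-deriving the weight computation. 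Once the identity $\bar\theta(k)=(n+m-1)-\theta(n+1-k)$ is in hand, the lemma is immediate.
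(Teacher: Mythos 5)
Your proposal is correct in substance and follows essentially the same route as the paper: identify $\dim\gr_G^p$ on each side with $\#\theta\inv(p)$ and $\#\bar\theta\inv(p)$ via Lemma~\ref{lem::Newton-weight} (available for $\mp f_a$ and the inverse characters by Remark~\ref{rem::dual-hypergeometric}), and then verify the combinatorial complementarity between the multisets $\{\theta(k)\}$ and $\{\bar\theta(k)\}$; the EMHS duality of Proposition~\ref{prop::EMHS-properties} is not actually needed at this step, exactly as you suspect when you call the point ``purely numerical.'' One small correction: your displayed key identity $\bar\theta(k)=(n+m-1)-\theta(n+1-k)$ is not literally true in general (e.g.\ $\alpha=(0,\tfrac13,\tfrac12)$, $m=0$); the correct pairing, which your own substitution via $\bar\alpha_k=1-\alpha_{n+t+1-k}$ produces, is $\theta(k)+\bar\theta(t+1-k)=n+m-1$ for $k\le t$ and $\theta(k)+\bar\theta(n+t+1-k)=n+m-1$ for $k>t$ — a $t$-dependent permutation rather than the plain reversal — but since only the equality of multisets is needed, this slip does not affect the conclusion.
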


	\begin{lem} \label{l:two filtrations}
	The two filtrations $F^\bullet_{\irr}$ and $G^\bullet$ coincide.
	\end{lem}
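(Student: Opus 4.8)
The plan is to prove the inclusion $G^{\bullet}\subseteq F^{\bullet}_{\irr}$, which is immediate, and then to promote it to an equality by a dimension count built on Poincaré duality and \cref{lem:dual-dimension-G}.

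First I would fix notation. Set $V_{1}:=\HH^{n+m-1}_{\dR}(\bb{G}_m^{n+m-1},f_{a})^{(G,\tilde\chi\times\rho)}$ and $V_{2}:=\HH^{n+m-1}_{\dR}(\bb{G}_m^{n+m-1},-f_{a})^{(G,\tilde\chi\inv\times\rho\inv)}$, each of dimension $n$. On $V_{1}$ we have $F^{\bullet}_{1}:=F^{\bullet}_{\irr}$ and the auxiliary filtration $G^{\bullet}_{1}:=G^{\bullet}$ of \eqref{eq:auxiliary-fil}; the same recipe, applied to the basis of \cref{p:coh basis} for the conjugate parameters \eqref{eq::conjugate-sequence-alpha} (again non-resonant, with $\bar\alpha_{1}=0$) and to $-f_{a}$, produces filtrations $F^{\bullet}_{2},G^{\bullet}_{2}$ on $V_{2}$. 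By the Newton-polyhedron description \eqref{eq:newton-monomial} each basis vector $\omega_{r,\ell}$ lies in $F^{\,n+m-1-w(\omega_{r,\ell})}_{\irr}$; since $G^{p}$ is by definition spanned by those $\omega_{r,\ell}$ with $n+m-1-w(\omega_{r,\ell})\ge p$, this gives $G^{p}_{i}\subseteq F^{p}_{i}$ for all $p\in\bb{R}$ and $i=1,2$.

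Next I would invoke Poincaré--Verdier duality. Combined with \cref{cor::forget-support-iso} (which identifies cohomology with compactly supported cohomology), it yields a perfect pairing $V_{1}\otimes V_{2}\to\bb{C}(-(n+m-1))$ pairing the $\tilde\chi\times\rho$- and $\tilde\chi\inv\times\rho\inv$-isotypic parts. By \cref{prop::EMHS-properties} the underlying exponential mixed Hodge structures are pure of weight $n+m-1$ and mutually dual, and the irregular Hodge filtration on de Rham fibres is compatible with that duality (cf.\ the de Rham fibre functor recalled in \cref{subsec::emhs} and \cite{Sabbah-2010-laplace,Sabbah-Yu-2015}). Hence $F^{\bullet}_{1}$ and $F^{\bullet}_{2}$, both in their Newton-polyhedron normalization \eqref{eq:newton-monomial}, are \emph{strictly opposed} under the pairing, so that $(F^{p}_{1})^{\perp}=F^{>(n+m-1)-p}_{2}$ and therefore
\[
	\dim F^{p}_{1}+\dim F^{>(n+m-1)-p}_{2}=n,\qquad\forall\,p\in\bb{R}.
\]
On the other hand, \cref{lem:dual-dimension-G} gives $\dim\gr^{p'}_{G_{1}}=\dim\gr^{(n+m-1)-p'}_{G_{2}}$ for every $p'$; summing over $p'\ge p$ turns this into $\dim G^{p}_{1}+\dim G^{>(n+m-1)-p}_{2}=n$. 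Feeding in $G^{>r}_{2}\subseteq F^{>r}_{2}$ (immediate from $G^{p}_{2}\subseteq F^{p}_{2}$) and $G^{p}_{1}\subseteq F^{p}_{1}$, one reads off for every $p$ the chain
\[
	\dim G^{p}_{1}\ \le\ \dim F^{p}_{1}\ =\ n-\dim F^{>(n+m-1)-p}_{2}\ \le\ n-\dim G^{>(n+m-1)-p}_{2}\ =\ \dim G^{p}_{1},
\]
so all four quantities coincide. In particular $\dim G^{p}_{1}=\dim F^{p}_{1}$, and together with $G^{p}_{1}\subseteq F^{p}_{1}$ this gives $G^{\bullet}=G^{\bullet}_{1}=F^{\bullet}_{1}=F^{\bullet}_{\irr}$, as claimed.

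The one genuinely delicate step is the strict opposedness used above: one must verify that the irregular Hodge filtration of the pure exponential Hodge structure $E(a;\alpha;\beta)$, in the Newton-polyhedron normalization attached to $\Delta(f_{a})$, is exactly the annihilator of its counterpart on the Poincaré-dual side, with the reflection about $n+m-1$. This is where the purity in \cref{prop::EMHS-properties} and the duality formalism of irregular Hodge theory are essential; granting it, what remains is the elementary bookkeeping above.
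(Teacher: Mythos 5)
Your argument is correct and essentially the same as the paper's: the inclusion $G^{\bullet}\subseteq F^{\bullet}_{\irr}$ via the Newton-polyhedron description, the duality $h^{\pm}_p(\alpha,\beta)=h^{\mp}_{n+m-1-p}(\bar\alpha,\bar\beta)$ of irregular Hodge numbers coming from purity and EMHS duality (Proposition~\ref{prop::EMHS-properties} together with Yu's duality theorem), the matching statement for $G^{\bullet}$ from Lemma~\ref{lem:dual-dimension-G}, and the same sandwich of dimension inequalities forcing equality. The only cosmetic difference is that you phrase the duality as strict opposedness $(F^{p}_{1})^{\perp}=F^{>(n+m-1)-p}_{2}$, whereas the paper only uses (and only needs) the resulting equality of graded dimensions, which is exactly what the cited references supply.
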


	\begin{proof}[Proof of Lemma \ref{lem::Newton-weight}] 	
	By Lemmas~\ref{lem::Newton-Polygon-ineq-m=0}, \ref{lem::Newton-Polygon-ineq-m-neq-0}, and~\ref{lem::Newton-Polygon-ineq-n=m}, the weight $w(\omega_{r,\ell})$ equals to the number $\max_{k}\{h_{k}(g_{r,\ell})/d\}$, where $h_k$ are defined in \eqref{eq:h_k-1}, \eqref{eq:h_k-2}, and \eqref{eq:h_k-3}. We can check that 
		$$w(\omega_{r,\ell})=h_{s_r+\ell}(g_{r,\ell})/d,$$
	where we put $h_1=\ldots =h_n=h_{n+1}$ when $n=m$.
	Now it suffices to check that $n+m-1-w(\omega_{r,\ell})$ agrees with one of the jumps of the irregular Hodge numbers of $\Hyp( \alpha;  \beta)_a$.
		
	If $s_r+\ell=n+1$, the monomial $g_{m,n+1-s_m}$ corresponds to the point 
		$$(a_2,\ldots,a_n,d-b_1,\ldots,d-b_m).$$ 
	Then we have
		$$n+m-1-h_{n+1}(g_{m,n+1-s_m})/d= n-1 -\sum_{i=1}^n\alpha_i + \sum_{j=1}^m \beta_j=\theta(1). $$
	
	If $s_r+\ell<n+1$, we have
		$$\begin{aligned}
			&n+m-1-h_{s_r+\ell}(g_{r,\ell})/d\\
			=&n+m-1- \biggl( \sum_{i=1}^n \alpha_i - (n+1-s_r-\ell) -\sum_{j=1}^m \beta_j + (2m-r) - (n-m)(\alpha_{s_r+\ell}-1)\biggr)\\
			=& (n-m)\alpha_{s_r+\ell} +r +(n-s_r-\ell) -\sum_{i=1}^n\alpha_i+\sum_{j=1}^m\beta_j,
		\end{aligned}$$
	which is exactly $\theta(s_r+\ell)$. 
	\end{proof}

	\begin{proof}[Proof of Lemma~\ref{lem:dual-dimension-G}]
	For simplicity, we write 
	\begin{equation}\label{eq:delta_p}
		\delta_p^{\pm}(\alpha,\beta):=\dim \gr_G^p\mathrm{H}^{n+m-1}_{\mathrm{dR}}(\bb{G}_m^{n+m-1},\pm f_a)^{(G,\tilde\chi\times \rho)}.
	\end{equation}
	Recall that in \eqref{eq::conjugate-sequence-alpha}, we let $t$ be the biggest natural number such that $\alpha_t=0$. For $1\leq k\leq t$, the numbers $\alpha_k$ and $\bar\alpha_{t+1-k}$ are $0$. And for $t+1\leq k\leq n$, we have $\bar \alpha_{n-k+t+1}=1-\alpha_k$. Then
		$$\sum_{i=1}^n \alpha_i+\sum_{i=1}^n \bar \alpha_i=n-t \text{ and }\sum_{j=1}^m \beta_j+\sum_{j=1}^m \bar\beta_j=m.$$	
	Similar to the number $\theta(k)$, we let $\bar\theta(k)$ be the numbers
		$$(n-m)\bar\alpha_k+\#\{i\mid \bar\beta_i< \bar\alpha_k\}+(n-k)-\sum_{i=1}^n \bar\alpha_i+\sum_{j=1}^m\bar\beta_j,\ 1\leq k\leq n$$
	for the sequences $\bar \alpha$ and $\bar \beta$. Then for $1\leq k\leq t$, we have
		\begin{equation*}
			\begin{split}
				\theta(k)+\bar \theta(t+1-k)
				= &\biggl( n-k- \sum_{i=1}^n \alpha_i+\sum_{j=1}^m\beta_j \biggr) +\biggl(n-(t+1-k)- \sum_{i=1}^n \bar\alpha_i+\sum_{j=1}^m\bar\beta_j \biggr)\\
				=&(2n-t-1)-(n-t)+m=n+m-1.
			\end{split}
		\end{equation*}
	For $t+1\leq k\leq n$, we have 
		\begin{equation*}
			\begin{split}
				&\theta(k)+\bar \theta(n-k+t+1)\\
				= &\biggl( (n-m)\alpha_k+\#\{j\mid \beta_j<\alpha_k\}+ n-k- \sum^n_{i=1} \alpha_i+\sum_{j=1}^m\beta_j \biggr) \\
				+& \biggl( (n-m)\bar \alpha_{n-k+t+1}+\#\{j\mid \bar\beta_j<\bar\alpha_{n-k+t+1}\} +n-(n-k+t+1)- \sum_{i=1}^n \bar\alpha_i+\sum_{j=1}^m\bar\beta_j \biggr)\\
				=&(n-m)+m+(n-t-1)-(n-t)+m=n+m-1.
			\end{split}
		\end{equation*}
	So there exists a permutation $\sigma\in S_{n}$ such that $\theta(k)+\bar\theta(\sigma(k))=n+m-1$. It follows that
	\begin{equation*}
		\begin{split}
			\delta_p^{\pm}(\alpha,\beta)
			=&\#\{k\mid \theta(k)= p\}
			=\#\{k\mid n+m-1-p= n+m-1-\theta(k)\}\\
			=&\#\{k\mid \bar \theta(k)= n+m-1-p\}
			=\delta_{n+m-1-p}^{\mp}(\bar\alpha,\bar\beta).
		\end{split}
	\end{equation*}
	\end{proof}

	\begin{proof}[Proof of Lemma \ref{l:two filtrations}] 
	For simplicity, we write 
		\begin{equation}\label{eq:h_p}
			h_p^{\pm}(\alpha,\beta):=\dim \gr_{F_{\rr{irr}}}^p\mathrm{H}^{n+m-1}_{\mathrm{dR}}(\bb{G}_m^{n+m-1},\pm f_a)^{(G,\tilde\chi\times \rho)}.
		\end{equation}
	By Lemma~\ref{lem::Newton-weight}, for every $p\in \mathbb{Q}$, we have 
		\begin{equation}\label{eq::inclusion}
			G^p\mathrm{H}^{n+m-1}_{\mathrm{dR}}(\bb{G}_m^{n+m},\pm  f_a)^{(G,\tilde\chi\times \rho)} \subset F^{p}_{\irr}\mathrm{H}^{n+m-1}_{\mathrm{dR}}(\bb{G}_m^{n+m},\pm f_a)^{(G,\tilde\chi\times \rho)},
		\end{equation}
	which implies that $\sum_{q\leq p} \delta^{\pm}_q(\alpha,\beta)\leq \sum_{q\leq p}h_{q}^{\pm}(\alpha,\beta)$.
	
		To prove the reverse inclusion, we consider the duality between the two filtered vector spaces $(\mathrm{H}^{n+m-1}_{\mathrm{dR}}(\bb{G}_m^{n+m-1},\pm f_a)^{(G,\tilde\chi\times \rho)},F^\bullet_{\irr})$ and $(\mathrm{H}^{n+m-1}_{\mathrm{dR}}(\bb{G}_m^{n+m-1},\mp f_a)^{(G,\tilde\chi\inv\times \rho\inv)},F^{\bullet}_{\irr})$, induced by Proposition~\ref{prop::EMHS-properties} and \cite[Thm.\,2.2]{yu2012irregular}. More precisely, we have 
	\begin{equation}\label{eq::duality}
		h^{\pm}_p(\alpha,\beta)=h^{\mp}_{n+m-1-p}(\bar\alpha,\bar \beta).
	\end{equation}
	Combining Lemma~\ref{lem:dual-dimension-G} and the equations \eqref{eq::inclusion} and \eqref{eq::duality}, we see, for any $p\in \mathbb{R}$, that
\begin{equation*}
	\begin{split}
	&\dim G^p \mathrm{H}^{n+m-1}_{\mathrm{dR}}(\bb{G}_m^{n+m},\pm  f_a)^{(G,\tilde\chi\times \rho)} \\
	= & \sum_{q\leq p}\delta^{\pm}_q(\alpha,\beta)
	\leq \sum_{q\leq p}h^{\pm}_q(\alpha,\beta)
	= \sum _{q\geq n+m-1-p}h_{q}^{\mp}(\bar\alpha,\bar \beta)\\
	\leq &\sum _{q\geq n+m-1-p}\delta_q^{\mp}(\bar \alpha,\bar \beta)
	=\sum_{q\leq p}  \delta_q^{\pm}(\alpha,\beta)\\
	=&\dim G^p \mathrm{H}^{n+m-1}_{\mathrm{dR}}(\bb{G}_m^{n+m},\pm  f_a)^{(G,\tilde\chi\times \rho)}.
	\end{split}
\end{equation*}
Hence, both sides in \eqref{eq::inclusion} have the same dimension for every $p$. Then Lemma \ref{l:two filtrations} follows.
\end{proof}

\subsubsection{Proof of Theorem \ref{thm:Hodge-fil}}

\begin{proof} We may assume $\alpha_1=0$ by \eqref{eq:reduce-to-alpha1=0}.
By \cite[Prop.\,3.54]{Sabbah18irregular} and \cite[Prop.\,11.22]{Mochizuki2021}, the irregular Hodge filtration on $\Hyp(\alpha,\beta)$ induces those on fibers $\Hyp(\alpha,\beta)_a$ at closed points of $S$, i.e., $(F^{\bullet}_{\rr{irr}}\Hyp(\alpha,\beta))_a=F^{\bullet}_{\rr{irr}}(\Hyp(\alpha,\beta))_a$. We have shown in Theorem \ref{thm::hodge-number} that the irregular Hodge filtration on the fibers $\Hyp(\alpha,\beta)_a$ are given in terms of the cohomology classes $\omega_{r,s}$ in \eqref{eq:auxiliary-fil}. Hence, we deduce that the irregular Hodge filtration on $\Hyp(\alpha,\beta)$ is the one in assertion (1), and the irregular Hodge numbers are those given in (2).

To prove assertion (3), observe that each element $\omega$ in $F^p_{\rr{irr}}\mathcal{H}^{n+m-1}_{\mathrm{dR}}(U/S,f)^{(G,\tilde\chi\times \rho)}$ is a linear combination of $\omega_{r,s}$ with $n+m-1-w(\omega_{r,s})\geq p$. Since $w(\nabla_{z\partial_z}(\omega_{r,s}))=1+w(\omega_{r,s})$, one has $w(\nabla_{z\partial_z}(\omega))\leq 1+w(\omega)$. So for each $a\in S(\bb{C})$, by the Newton polyhedron filtration \eqref{eq:newton-monomial}, $\nabla_{z\partial_z}(\omega)$ is inside 
	$F^{p-1}_{\rr{irr}}\mathcal{H}^{n+m-1}_{\mathrm{dR}}(U/S,f)^{(G,\tilde\chi\times \rho)}_a.$
Therefore, we conclude that $\nabla(\omega)\in \Omega^1_S\otimes  F^{p-1}_{\rr{irr}}\mathcal{H}^{n+m-1}_{\mathrm{dR}}(U/S,f)^{(G,\tilde\chi\times \rho)}$.
\end{proof}

\section{Frobenius structures on hypergeometric connections and \texorpdfstring{$p$}{p}-adic estimates}
\label{sec:Frob-struc}
	In this section, let $p$ be a prime number and $k=\mathbb{F}_q$ the finite field with $q=p^s$ elements for an integer $s\ge 1$. 
	Let $K$ be a finite extension of $\mathbb{Q}_p$ with residue field $k$ containing an element $\pi$ satisfying $\pi^{p-1}=-p$. 
	We fix such an element $\pi$ and denote the associated additive character by $\psi\colon\mathbb{F}_p\to K^{\times}$ \cite[(1.3)]{Ber84}. 
	The $q$-th power Frobenius on $k$ admits a lift $\sigma=\id$ on $\mathcal{O}_K$. 

	Let $n>m$ be two integers, $\alpha=(\alpha_i=\frac{a_i}{q-1})_{i=1}^n,\beta=(\beta_j=\frac{b_j}{q-1})_{j=1}^m$ be two sequences of non-decreasing rational numbers $\in [0,1)$ with denominator $q-1$. 
	Let $\omega:k^{\times}\to K^{\times}$ be the Teichmüller character and set $\chi_i=\omega^{a_i}$ (resp. $\rho_j=\omega^{b_j}$).
	The hypergeometric sum associated to $\psi, \chi=(\chi_1,\dots,\chi_n), \rho=(\rho_1,\dots,\rho_m)$ is defined, for $a\in k^{\times}$, by 
	\begin{equation}
		\CHyp_{(n,m)}(\chi;\rho)(a)=
		\sum_{\begin{subarray}{c} x_i, y_j\in k^{\times}, \\ x_1\ldots x_n=ay_1\ldots y_{m}\end{subarray}} \psi\biggl( \Tr_{k/\mathbb{F}_p} \biggl(\sum_{i=1}^n x_i -\sum_{j=1}^m y_j\biggr)\biggr) \cdot 
		\prod_{i=1}^n \chi_i(x_i)
		\prod_{j=1}^m \rho_{j}^{-1}(y_j).
	\label{hyp sums}
	\end{equation}
	When $(\chi,\rho)$ is non-resonant, the above sum equals to (up to a sign) the Frobenius trace of the hypergeometric overconvergent $F$-isocrystal $\DHyp(\chi,\rho)$ at $a\in \mathbb{G}_{m}(k)$ \cite{Miy}. 
	Its underlying connection is the hypergeometric connection $\Hyp_{(-1)^{m+np}/\pi^{n-m}}$ \cite[Thm.\,4.1.3]{Miy}. 
	When $(\chi,\rho)$ is resonant, the above sum can be written as a sum of $n$ Weil numbers (see \S~\ref{sss:hypsum-resonant}). 

	We are interested in the $p$-adic valuation of Frobenius eigenvalues (normalized by $\ord_q$) of the above sum (called \textit{Frobenius slopes}), encoded in the Frobenius Newton polygon \cite[\S 2]{Mazur}. 

	Recall that the irregular Hodge numbers of the hypergeometric connection $\Hyp(\alpha;\beta)$ is given by the function $\theta\colon\{1,\dots,n\}\to \mathbb{Q}$ \eqref{eq:theta(k)} is defined as
	\begin{equation} \label{eq:thetaFrob}
			\theta(k)= (n-m)\alpha_k+\#\{i\mid \beta_i< \alpha_k\}+(n-k)-\sum_{i=1}^n \alpha_i+\sum_{j=1}^m\beta_j.
	\end{equation}

\begin{theorem}	\label{t:Frobslope}
	If orders of $\chi_i,\rho_j$ divide $p-1$, then for each $a\in \mathbb{G}_m(k)$, the Frobenius Newton polygon of $\CHyp_{(n,m)}(\chi;\rho)(a)$ coincides with the irregular Hodge polygon defined by \eqref{eq:thetaFrob}. 
\end{theorem}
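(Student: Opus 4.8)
The plan is to realize the hypergeometric sum as a twisted exponential sum attached to the Laurent polynomial $f_a$ of \eqref{eq::Laurent-polynomial}, bound its Frobenius Newton polygon from below by the Newton‑polyhedron polygon of $\Delta(f_a)$ via Adolphson--Sperber, identify that polygon with the irregular Hodge polygon using the computations of \S\ref{sec::hodge-number}, and finally upgrade ``Newton above Hodge'' to an equality by Wan's ordinariness criterion together with a Stickelberger computation of Gauss‑sum valuations that is where the hypothesis on the character orders enters. \textbf{Step 1 (reduction to a twisted exponential sum).} Since the orders of $\chi_i,\rho_j$ divide $p-1$, one has $\alpha_i,\beta_j\in\frac1{p-1}\mathbb Z\cap[0,1)$, so in \S\ref{sec:geom-intpn} we may take $d=p-1$. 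After normalising $\alpha_1=0$ by \eqref{eq:reduce-to-alpha1=0} and eliminating $x_1$ through the substitution $x_1=a\,y_1\cdots y_m/(x_2\cdots x_n)$ exactly as in \eqref{eq::identification}, the sum \eqref{hyp sums} becomes, up to an explicit Gauss‑sum factor, the twisted exponential sum over $\mathbb G_{m,\mathbb F_q}^{n+m-1}$ attached to $f_a$ and the $\mu_d^{n+m-1}$‑character $\tilde\chi\times\rho$. Via Miyatani's theorem that $\DHyp(\chi;\rho)$ is a rank‑$n$ overconvergent $F$‑isocrystal with underlying connection $\Hyp_{(-1)^{m+np}/\pi^{n-m}}(\alpha;\beta)$ and the $p$‑adic incarnation of Proposition~\ref{prop::geometric-realization-convolution}, the $n$ Frobenius eigenvalues of $\DHyp(\chi;\rho)$ at $a$ are identified with those of $\varphi_a$ acting on the $(G,\tilde\chi\times\rho)$‑isotypic part of the Dwork (equivalently, rigid) cohomology $\mathrm H^{n+m-1}_c(\mathbb G_{m,\mathbb F_q}^{n+m-1},f_a)$; the Frobenius Newton polygon of $\CHyp_{(n,m)}(\chi;\rho)(a)$ is the Newton polygon of $\det(1-T\varphi_a)$ on this space.

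\textbf{Step 2 (Newton above Hodge, and Hodge $=$ irregular Hodge).} Since $n>m$, Lemmas~\ref{lem::Newton-Polygon-ineq-m=0} and~\ref{lem::Newton-Polygon-ineq-m-neq-0} show that $f_a$ is non‑degenerate with respect to $\Delta(f_a)$ for every $a\in\mathbb G_m(\mathbb F_q)$ (and convenient when $m=0$). Hence Adolphson--Sperber \cite{AS89,AS93} apply: the Dwork complex of the twisted sum has cohomology concentrated in degree $n+m-1$, and its Frobenius Newton polygon, normalised by $\ord_q$, lies on or above the Hodge polygon attached to $\Delta(f_a)$ and $\tilde\chi\times\rho$. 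This Hodge polygon is assembled from the polytope weights $w(Q)$ of the lattice points $Q\in\mathbb R_{\ge0}\Delta(f_a)$ congruent to $(a_2,\dots,a_n,-b_1,\dots,-b_m)$ modulo $d$ — the very combinatorics governing the Newton‑polyhedron filtration of \S\ref{sec::hodge-number}; by Lemma~\ref{lem::Newton-weight} its slopes are the numbers $\theta(k)$ of \eqref{eq:thetaFrob}, $1\le k\le n$ (up to the global $\mathbb R$‑shift), so it equals the irregular Hodge polygon by Theorem~\ref{thm::hodge-number} (both equal the Newton‑polyhedron polygon of $\Delta(f_a)$, matching Yu's description \cite{yu2012irregular}). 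Therefore the Frobenius Newton polygon of $\CHyp_{(n,m)}(\chi;\rho)(a)$ lies on or above the irregular Hodge polygon.

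\textbf{Step 3 (ordinariness).} For the reverse inequality we invoke Wan's criterion \cite{Wan93}: $f_a$ is ordinary at $p$ (i.e.\ $\NP=\HP$) if and only if every facial polynomial $f_{a,\sigma}$, for $\sigma$ a closed face of $\Delta(f_a)$ not containing the origin, is ordinary. These facial polynomials were already determined in the proofs of Lemmas~\ref{lem::Newton-Polygon-ineq-m=0} and~\ref{lem::Newton-Polygon-ineq-m-neq-0}: after descending along the $\mu_d^{n+m-1}$‑cover they become genuinely \emph{diagonal} twisted Laurent polynomials, i.e.\ (sub)sums of the monomials $x_i$, $y_j$ (and the monomial $\prod y_j^d/\prod x_i^d$) in independent directions, twisted by $\chi_i,\rho_j$; the coefficient $a$ is a unit and does not affect $p$‑adic valuations. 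For such a diagonal sum the Frobenius slopes are the $\ord_q$‑valuations of the Gauss sums $g(\chi_i,\psi)$, $g(\rho_j^{-1},\psi)$, which by Stickelberger's theorem equal $\bigl(\text{sum of the base-}p\text{ digits of the exponent}\bigr)/(s(p-1))$. Because the character orders divide $p-1$, each exponent has the form $c=c'\,(q-1)/(p-1)$ with $0\le c'<p-1$, whose base‑$p$ digit sum is $sc'$, so the valuation is $c'/(p-1)$ — precisely the polytope weight ($\alpha_i$, resp.\ $1-\beta_j$) predicted by the Hodge polygon. Hence every $f_{a,\sigma}$ is ordinary, so $f_a$ is ordinary, and the Frobenius Newton polygon of $\CHyp_{(n,m)}(\chi;\rho)(a)$ coincides with the irregular Hodge polygon, as claimed.

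\textbf{Main obstacle.} The delicate point is Step 3: one must run Wan's facial decomposition in the \emph{twisted} (and, when $m>0$, non‑convenient) setting, track exactly which character is attached to each facial polynomial under the $\mu_d^{n+m-1}$‑isotypic bookkeeping, and match the resulting Gauss‑sum valuations with the weights $w(\omega_{r,\ell})$ of \S\ref{sec::hodge-number} on the nose — including the global normalisation shift. Steps 1 and 2, by contrast, are essentially formal once Proposition~\ref{prop::geometric-realization-convolution}, Miyatani's identification of the Frobenius structure, and the Newton‑polyhedron description of the irregular Hodge filtration are in hand.
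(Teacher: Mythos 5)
Your Steps 1–2 are essentially the paper's own route (Adolphson--Sperber's ``Newton above Hodge'' for the twisted sum attached to $f_a$, plus the identification of their Hodge polygon with the irregular Hodge polygon via the weights $w(\omega_{r,\ell})$, i.e.\ Proposition~\ref{p:TwoHP}). The genuine gap is in Step 3, and it is exactly the point you flag as the ``main obstacle'' and then assert away: you invoke a facial decomposition theorem for the \emph{twisted} sum $(f_a,\tilde\chi\times\rho)$, attributed to \cite{Wan93}. Wan's facial decomposition theorem and his diagonal ordinariness criterion (\cite[Thm.\,3.1, Cor.\,2.6]{Wan04}) are statements about \emph{untwisted} exponential sums; no twisted facial decomposition is available in \cite{AS93,Wan93,Wan04}, and proving one would require redoing Wan's argument in the twisted setting --- this is not a bookkeeping issue but the missing ingredient. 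Computing Gauss-sum valuations of the diagonal facial polynomials by Stickelberger does not, by itself, give ordinariness of the full twisted sum without such a decomposition. The paper circumvents this precisely by \emph{not} working with the twisted sum: it pulls back along the $(p-1)$-power map to the untwisted Laurent polynomial $\widetilde f_a(x,y)=f_a(x^{p-1},y^{p-1})$, proves that $\widetilde f_a$ is ordinary by Wan's (untwisted) facial decomposition plus the diagonal criterion (here $S(\delta_i)\simeq(\mathbb{Z}/(p-1)\mathbb{Z})^{n+m-1}$, which is where the hypothesis on character orders enters), and then descends: the complete sum for $\widetilde f_a$ decomposes as in \eqref{eq:dec-sum} into the hypergeometric sums over all characters of order dividing $p-1$, the Hodge polygons decompose accordingly, and ``total Newton $=$ total Hodge'' combined with ``Newton $\ge$ Hodge'' for each twisted piece (\cref{t:AS}) forces equality piece by piece. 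If you want to keep your Step 3 you must either prove a twisted facial decomposition or replace it by this pullback-and-descend argument.

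A second, smaller gap: the theorem does not assume $(\chi,\rho)$ non-resonant, but your Steps 1--2 do --- Miyatani's identification of the Frobenius structure, the basis $\omega_{r,\ell}$ of Proposition~\ref{p:coh basis}, and Proposition~\ref{p:TwoHP} are all proved under non-resonance. The paper handles the resonant case separately by induction on $n$, using the elementary identity \eqref{eq:dec-resonant} which writes a resonant sum of type $(n,m)$ as $q\cdot\CHyp_{(n-1,m-1)}(\chi';\rho')$ minus a product of Gauss sums, whose slopes are matched with $\theta(n)$ via Stickelberger; your proposal does not address this case at all. (Also note the sign convention $\chi_i=\omega^{-d_i}$ in Adolphson--Sperber, so the basis $\{g_{r,\ell}\}$ computes the polygon for $\overline{\dd}$ and one needs the duality \eqref{eq::duality} to pass back to $\dd$; this is minor but should be tracked if you rewrite Step 2 carefully.)
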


	A ``Newton above Hodge'' type result for twisted exponential sums was obtained by Adolphson and Sperber \cite{AS93}. In our case, we show that the Hodge polygon in \textit{loc. cit.} for hypergeometric sums coincides with the irregular Hodge polygon of hypergeometric connections. 
	Then, we apply a result of Wan \cite{Wan93} to conclude ``Newton equals to Hodge''. 
	In \cite{XZ22}, the second author and Zhu used a similar argument to study the Newton polygon of Kloosterman sums for classical groups. 
	
\subsection{Frobenius Newton polygon above Hodge polygon}
	In this subsection, we revise Adolphson--Sperber's result on ``Newton above Hodge'' for certain twisted exponential sums \cite{AS93} and study the associated Hodge polygons. 

\begin{secnumber} \label{sss:AS}
Let $N$ be a positive integer, 
        $$\chi=(\chi_1,\dots,\chi_N):(k^{\times})^N\to K^{\times}$$
    a multiplicative character, 
	and $g:\mathbb{G}_{m}^N\to \mathbb{A}^1$ a morphism defined by a Laurent polynomial 
	\[
	g(x_1,\cdots,x_N)=\sum_{j=1}^M a_j x^{u_j}\in k[x_1^{\pm},\cdots,x_N^{\pm}],
	\]
where $\{u_j\}_{j=1}^M$ is a finite subset of $\mathbb{Z}^N$ and $a_j\in k^{\times}$. For $m\in \bb{N}$, we consider the sums
\begin{equation} \label{eq:twistsums}
	S_{m}(\chi,g)=\sum_{x\in \Gm^N(\mathbb{F}_{q^m})} \chi^{(m)}(x)\psi^{(m)}(g(x)),
\end{equation}
	where $\chi^{(m)}=\chi\circ \Nm_{\mathbb{F}_{q^m}/k}$, $\psi^{(m)}=\psi\circ \Tr_{\mathbb{F}_{q^m}/\mathbb{F}_p}$. 
	The associated $L$-function 
	\begin{equation} \label{eq:Lfunction}
	L(\chi,g;T)=\exp\biggl(\sum_{m\ge 1}S_m(\chi,g)\frac{T^m}{m}\biggr)
	\end{equation}
	is a rational function in $T$ by the Grothendieck--Lefschetz trace formula (or Dwork trace formula). 

	Recall that we denote  $\Delta=\Delta(g)$ by the convex closure in $\mathbb{R}^N$ generated by the origin and lattices defined by the exponents $\{u_j\}$ of $g$ in \cref{d:Delta}. 
	Let $C(g)$ be the cone over $\Delta$, i.e., the union of all rays in $\mathbb{R}^N$ emanating from the origin and passing through $\Delta$. 
	Let $M(g)=C(g)\cap \mathbb{Z}^N$.

	Adolphson and Sperber considered a subring $R(g)$ of $k[x_1^{\pm},\dots,x_N^{\pm}]$ defined by
	\[
	R(g)=k[x^{M(g)}].
	\]
	
	We take $d_i\in [0,q-2]$ such that $\chi_i=\omega^{-d_i}$ \footnote{Adolphson--Sperber's convention $\chi_i=\omega^{-d_i}$ is different from our convention in the beginning of \S~\ref{sec:Frob-struc} by a minus sign.}. 
	Let $\overline{d_i}$ be $q-1-d_i$ if $d_i\neq 0$ and $d_i$ if $d_i=0$. 
	We set $\dd=(d_1,\dots,d_N)$, $\overline{\dd}=\{\overline{d_1},\dots,\overline{d_N}\}$, and $N_{\dd}=(q-1)^{-1}\dd+\mathbb{Z}^N$. We define a $R(g)$-module $R_{\dd}(g)$ by
	\[
	R_{\dd}(g)=\bigg{\{} \sum_{\textnormal{finite}} b_u x^u | u\in N_{\dd}\cap C(g),b_u\in k\bigg{\}}. 
	\]

	There exists a (minimal) positive integer $M$ such that for all $u\in \frac{\mathbb{Z}^N}{q-1}\cap C(g)$, the weight function $w(u)$, defined as the minimal positive real number $w$ such that $u\in w\Delta(g)$, is a rational number with denominator dividing $M$. 
	Then $w$ defines an increasing filtration on $R(g)$ by
	\[
	R(g)_{i / M}=\bigg{\{}\sum_{u \in M(g)} b_{u} x^{u}: w(u) \leq \frac{i}{M} \text { for all } u \text { with } b_{u} \neq 0\bigg{\}}.
	\]
	We denote the associated graded module by
	\[
	\overline{R}(g)=\oplus_{i\ge 0} \overline{R}(g)_{i/M},\quad \overline{R}(g)_{i/M}=R(g)_{i/M}/R(g)_{(i-1)/M}.
	\]
	Similarly, we equip $R_{\dd}(g)$ with a filtration compatible with that of $R(g)$, and let $\overline{R}_{\dd}(g)$ be the associated graded $\overline{R}(g)$-module. 
\end{secnumber}
\begin{secnumber} \label{sss:HP}
	In the following, we assume that $g$ is non-degenerate and that $\dim \Delta(g)=N$. 

	For $1\le i\le N$, let $\overline{g}_i$ be the image of $x_i\frac{\partial}{\partial x_i}g$ in $\overline{R}(g)_1$, and set
	\[
	\overline{I}_{\dd}=\overline{g}_1\overline{R}(g)_{\dd}+\dots+ \overline{g}_N \overline{R}(g)_{\dd}
	\]
	a graded submodule of $\overline{R}(g)_{\dd}$. 
	For each $i\ge 0$, we define a finite set $\mathscr{B}_{\dd}^{i/M}(=\mathscr{B}(g)_{\dd}^{i/M})\subset N_{\dd}\cap C(g)$ of exponents as follows. 
	We take a $k$-linearly independent set of monomials $\{x^{\mu}|\mu\in \mathscr{B}_{\dd}^{i/M}\}$ of weight $i/M$ such that the $k$-subspace $\overline{V}_{\dd,i/M}$, which it spans, is a complement to $\overline{R}(g)_{\dd,i/M}\cap \overline{I}_{\dd}$, i.e.,
	\[
	\overline{R}(g)_{\dd,i/M}= \overline{V}_{\dd,i/M}\oplus (\overline{R}(g)_{\dd,i/M}\cap \overline{I}_{\dd,i/M}).
	\]
	We set $\mathscr{B}(g)_{\dd}=\cup_{i\ge 0} \mathscr{B}(g)_{\dd}^{i/M}$ and $V(g)$ the volume of $\Delta(g)$. 
	The quotient $\overline{R}(g)_{\dd}/\overline{I}_{\dd}$ admits a basis of monomials in $S_{\dd}$ and has dimension \cite[Lem.\,2.8]{AS93}
	\[
		\dim \overline{R}(g)_{\dd}/\overline{I}_{\dd} =N! V(g). 
	\]

	In this case, the $L$-function $L(\chi,g;T)^{(-1)^{N-1}}$ \eqref{eq:Lfunction} is a polynomial of degree $N!V(g)$ \cite[Cor.\,2.12]{AS93}. 
	Adolphson and Sperber studied the Frobenius Newton polygon of this $L$-function and compared it with a Hodge polygon defined as below. 

	For an integer $0\le d\le q-2$, let $d'$ be the nonnegative residue of $pd$ modulo $q-1$.
	Recall that $q=p^s$ for an integer $s\ge 1$. 
	For $\dd=(d_1,\dots,d_N)$, we set $\dd'=(d_1',\dots,d_{N}')$ and $\dd^{(i)}$ the $i$-th composition of $(-)'$ on $\dd$ for $i\ge 1$. 
	Note that $\dd^{(s)}=\dd$. 

	We arrange elements of $S_{\dd}=\{ u_{\dd}(1),\dots,u_{\dd}(N!V(g))\}$ by $w(u_{\dd}(1))\le \dots \le w(u_{\dd}(N!V(g)))$. 
	And we repeat this ordering for $S_{\dd'},\dots,S_{\dd^{(s-1)}}$. 
	For an integer $\ell\ge 0$, we set \cite[Thm.\,3.17]{AS93}
	$$
	W(\ell)=\operatorname{card}\bigg{\{} j \ \bigg{|}\  \sum_{i=0}^{s-1} w(u_{\dd^{(i)}}(j))=\frac{\ell}{M}\bigg{\}}.
	$$
	When $\ell> sN M$, we have $W(\ell)=0$.

	The Hodge polygon $\HP(\Delta(g)_{\dd})$ is defined by the convex polygon in $\mathbb{R}^2$ with vertices $(0,0)$ and 
	\[
	\biggl( \sum_{\ell=0}^m W(\ell), \frac{1}{sM}\sum_{\ell=0}^m \ell W(\ell)\biggr),\quad m=0,1,\dots, sNM.
	\]
\end{secnumber}

\begin{thm}[{\cite[Cor.\,3.18]{AS93} }]
	If $g$ is non-degenerate and $\dim(\Delta(g))=N$, the Frobenius Newton polygon of $L(\chi,g;T)^{(-1)^{N-1}}$ lies above the Hodge polygon $\HP(\Delta(g)_{\dd})$, and their endpoints coincide. 
	\label{t:AS}
\end{thm}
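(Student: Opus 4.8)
Theorem~\ref{t:AS} is Adolphson and Sperber's ``Newton above Hodge'' estimate for twisted exponential sums, and I would prove it by Dwork's $p$-adic cohomological method as in \cite{AS89}, extended to the twisted case as in \cite{AS93}. The plan has three parts: (i) realize $L(\chi,g;T)^{(-1)^{N-1}}$ as the reciprocal characteristic polynomial of a single completely continuous Frobenius operator on the top cohomology of a Dwork--Koszul complex; (ii) bound its Newton polygon from below via $p$-adic estimates for the matrix of that operator in a weight-ordered monomial basis; (iii) identify this lower bound with $\HP(\Delta(g)_{\dd})$ and show that it is attained at the right-hand endpoint, the last step being the one that uses non-degeneracy of $g$.

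For (i), I would fix the Dwork splitting function $\theta(t)=\exp(\pi(t-t^p))$ with $\pi^{p-1}=-p$, whose Taylor coefficients $\gamma_k$ satisfy a $p$-adic growth bound linear in $k$, and set $F(x)=\prod_{j=1}^M\theta(\hat a_jx^{u_j})$ with $\hat a_j$ the Teichmüller lift of $a_j$. For a convergence parameter $b$ slightly above the critical value attached to $\Delta(g)$, let $L(b)_{\dd}$ be the $p$-adic Banach space of series $\sum_{u\in N_{\dd}\cap C(g)}c_ux^u$ with $\ord_p(c_u)\ge b\,w(u)$, where $w$ is the weight function of \S\ref{sss:AS}. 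The single twisted Frobenius $\alpha_1=\psi_p\circ(\text{multiplication by }F)$, where $\psi_p$ sends $x^u$ to $x^{u/p}$ on $p$-divisible exponents and to $0$ otherwise, carries $L(b)_{\dd}$ into $L(b)_{\dd'}$ --- the shift $\dd\mapsto\dd'$ appearing precisely because $\psi_p$ divides exponents by $p$ --- so the $s$-fold composite $\alpha_{\dd}=\alpha_{1,\dd^{(s-1)}}\circ\cdots\circ\alpha_{1,\dd}$ is a completely continuous endomorphism of $L(b)_{\dd}$. Dwork's trace formula writes a suitable alternating product of the Fredholm determinants of $q^i\alpha_{\dd}$ as a power of $L(\chi,g;T)$; forming the Koszul complex on the commuting operators $D_i=x_i\partial_{x_i}+x_i\,\partial_{x_i}(\log F)$, whose cohomology is concentrated in degree $N$ when $g$ is non-degenerate and $\dim\Delta(g)=N$, collapses this to
\[
	L(\chi,g;T)^{(-1)^{N-1}}=\det\bigl(1-\alpha_{\dd}T\mid H^N\bigr),
\]
with $\dim H^N=N!\,V(g)=\dim_k\overline{R}(g)_{\dd}/\overline{I}_{\dd}$ by \cite[Lem.\,2.8]{AS93}, and with $H^N$ identified with a lift of $\overline{R}(g)_{\dd}/\overline{I}_{\dd}$.

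For (ii), I would pick a monomial basis of $H^N$ lifting a monomial basis of $\overline{R}(g)_{\dd}/\overline{I}_{\dd}$ and order it by nondecreasing weight. Combining the linear growth of the Dwork coefficients $\gamma_k$ with the superadditivity $w(a+b)\le w(a)+w(b)$ and homogeneity of the weight function yields the standard lower bounds for the entries of $\alpha_1$ on weight-homogeneous monomials, hence for those of the $q$-Frobenius $\alpha_{\dd}$: after the usual rescaling, the diagonal entry of $\alpha_{\dd}$ attached to a monomial $u$ with $\sum_{i=0}^{s-1}w(u_{\dd^{(i)}})=\ell/M$ has normalized valuation at least $\ell/(sM)$, while off-diagonal entries are no smaller relative to the weight of the target. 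By the standard lemma bounding the Newton polygon of $\det(1-AT)$ from below by the polygon whose successive slopes are the sorted ``target weights'' (valid under exactly this kind of estimate), the Frobenius Newton polygon of $L(\chi,g;T)^{(-1)^{N-1}}$ lies above the polygon with vertices $\bigl(\sum_{\ell\le m}W(\ell),\ \tfrac1{sM}\sum_{\ell\le m}\ell W(\ell)\bigr)$, which is $\HP(\Delta(g)_{\dd})$ by definition (\S\ref{sss:HP}). For (iii), the right endpoints already have the same abscissa $N!\,V(g)$; to match ordinates one must check that the total (normalized) valuation of all Frobenius slopes equals $\tfrac1{sM}\sum_\ell\ell W(\ell)$, equivalently that the reduction of $\alpha_{\dd}$ induces an \emph{isomorphism} on $\gr H^N\cong\overline{R}(g)_{\dd}/\overline{I}_{\dd}$. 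This is exactly where non-degeneracy of $g$ enters: it makes the symbols $\overline{g}_1,\dots,\overline{g}_N$ a graded-regular system, so that $\overline{R}(g)_{\dd}/\overline{I}_{\dd}$ has the expected dimension $N!\,V(g)$ and the induced Frobenius on it is bijective; hence $\det(\alpha_{\dd}\mid H^N)$ has exactly the predicted valuation, giving equality of endpoints.

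The main obstacle, as usual in the twisted setting, is the bookkeeping of the multiplicative character in parts (ii)--(iii): checking that $\psi_p$ carries $N_{\dd}\cap C(g)$ into $N_{\dd'}\cap C(g)$ compatibly with the weight function, and that the $s$-fold composite returns to $L(b)_{\dd}$ with the \emph{sharp} diagonal bound built from the full Teichmüller orbit $\{\dd^{(i)}\}_{i=0}^{s-1}$. It is precisely this sharpness --- the exact factor $1/(sM)$ and the orbit-sum $\sum_iw(u_{\dd^{(i)}})$ --- that makes $\HP(\Delta(g)_{\dd})$ the genuine lower polygon rather than a lossy one, and it is the same input on which the endpoint equality rests.
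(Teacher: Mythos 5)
Your parts (i) and (ii) are a faithful sketch of the Adolphson--Sperber argument that the paper simply invokes by citation: Dwork splitting function, twisted Banach spaces indexed by $N_{\dd}\cap C(g)$, the Koszul complex on the operators $D_i$ with cohomology concentrated in degree $N$ under non-degeneracy and $\dim\Delta(g)=N$, and the weight-ordered matrix estimates giving ``Newton above Hodge''. The genuine gap is in (iii), where you derive the coincidence of endpoints from the claim that non-degeneracy makes the reduction of $\alpha_{\dd}$ an isomorphism on $\gr H^N\cong\overline{R}(g)_{\dd}/\overline{I}_{\dd}$, so that $\det(\alpha_{\dd}\mid H^N)$ has ``exactly the predicted valuation''. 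That claim is false in general, and if it were true it would prove much more than the theorem: a rescaled Frobenius matrix that is invertible on the associated graded is precisely the statement that $(g,\chi)$ is \emph{ordinary}, i.e.\ that the Newton and Hodge polygons coincide everywhere. Ordinariness fails for perfectly non-degenerate $g$ (the introduction of this paper recalls a type $(2,0)$ twisted example where Newton lies strictly above Hodge at every closed point, and a non-confluent example at $p=31$); indeed the entire point of \cref{sec:Frob-struc} is that ordinariness requires extra input (Wan's criterion, character orders dividing $p-1$) beyond \cref{t:AS}. Non-degeneracy buys you the graded regularity of $\overline{g}_1,\dots,\overline{g}_N$, hence acyclicity of the Koszul complex and the dimension count $N!\,V(g)$; it says nothing about invertibility of the reduced Frobenius.

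What the endpoint statement actually requires is only that the total (normalized) valuation of $\det\bigl(1-\alpha_{\dd}T\mid H^N\bigr)$ equal $\frac{1}{sM}\sum_{\ell}\ell\,W(\ell)$, and in Adolphson--Sperber this is obtained from a duality (functional-equation) argument rather than from any unit-root claim: Poincar\'e duality for the Dwork cohomology pairs the space for $(g,\chi)$ with the one for $(-g,\chi^{-1})$ (same Newton polytope, exponent vector $\dd$ replaced by $\overline{\dd}$), sending a Frobenius slope $\lambda$ to $N-\lambda$, while the two Hodge polygons are complementary in the same sense; applying ``Newton above Hodge'' to both members of the dual pair forces the two sums of slopes to agree, hence the right endpoints coincide (the left endpoints and the common abscissa $N!\,V(g)$ are as you say). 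This slope-reversing symmetry is exactly the mechanism the paper itself exploits at the Hodge level in \cref{lem:dual-dimension-G} and \eqref{eq::duality}. Replace your bijectivity claim by this duality step; as written, the endpoint half of the theorem is unproved and the argument offered for it cannot be repaired, since it proves a statement known to be false.
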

\begin{defn} \label{d:ordinary}
We say that $(g,\chi)$ is \textit{ordinary} if these two polygons coincide. 
When the character $\chi$ is trivial, we simply say $g$ is \textit{ordinary}. 
\end{defn}

\begin{secnumber}
	In the following, we apply the above theory to the case of hypergeometric sum at the beginning of \S~\ref{sec:Frob-struc}. 
	We may assume that $\chi_1$ is trivial (i.e. $\alpha_1=0$). 
	Let $a$ be an element of $k^{\times}$. 
	We take $N=n+m-1$, $\dd=(\overline{a_2},\dots,\overline{a_n},b_1,\dots,b_m)$, and $g$ to be the non-degenerate function \eqref{eq::Laurent-polynomial}
	\[
	f_a=a\frac{y_1\dots y_m}{x_2\dots x_n}+ x_2+\dots+x_n -y_1-\dots -y_m. 
	\]
	Then, we recover the hypergeometric sum \eqref{hyp sums} from \eqref{eq:twistsums}. 

\end{secnumber}
\begin{prop} \label{p:TwoHP}
	If $(\chi,\rho)$ is non-resonant and orders of the characters $\chi_i$ and $\rho_j$ divide $p-1$, 
	then the Hodge polygon $\HP(\Delta(f_a)_{\dd})$ coincides with the irregular Hodge polygon associated to $(0,\alpha_2=\frac{a_2}{p-1},\dots,\alpha_n=\frac{a_n}{p-1})$, $(\beta_1=\frac{b_1}{p-1},\dots,\beta_m=\frac{b_m}{p-1})$. 
\end{prop}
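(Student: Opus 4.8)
The plan is to compute both sides of the claimed equality as the same polygon: the one whose multiset of slopes is the set of Adolphson--Sperber weights of a monomial basis of $\overline R(f_a)_{\dd}/\overline I_{\dd}$, which the results of \S\ref{sec::hodge-number} identify with the multiset $\{\theta(k)\mid 1\le k\le n\}$. I will assume $\alpha_1=0$ throughout, as allowed.

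First I would extract from the hypothesis ``orders of $\chi_i,\rho_j$ divide $p-1$'' the simplification of $\HP(\Delta(f_a)_{\dd})$. Since $\chi_\ell^{p-1}=1$ is the same as $(q-1)\mid d_\ell(p-1)$, we get $pd_\ell\equiv d_\ell\pmod{q-1}$; as both $d_\ell$ and $d_\ell'$ lie in $[0,q-2]$ this gives $d_\ell'=d_\ell$, whence $\dd^{(i)}=\dd$ for all $i$. Hence $\sum_{i=0}^{s-1}w(u_{\dd^{(i)}}(j))=s\,w(u_{\dd}(j))$, so $W(\ell)=\#\{\,j\mid w(u_{\dd}(j))=\ell/(sM)\,\}$; feeding this into the vertices $\bigl(\sum_{\ell\le m}W(\ell),\ \tfrac1{sM}\sum_{\ell\le m}\ell W(\ell)\bigr)$ shows that $\HP(\Delta(f_a)_{\dd})$ is precisely the convex polygon whose multiset of slopes (each of horizontal length one) is $\{\,w(u)\mid u\in\mathscr{B}(f_a)_{\dd}\,\}$; by \cite[Lem.\,2.8]{AS93} and the volume computation after Lemma~\ref{lem::Newton-Polygon-ineq-n=m} this multiset has $N!\,V(f_a)=n$ elements.

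Next I would identify this multiset of weights with $\{\theta(k)\mid 1\le k\le n\}$. Since $f_a$ is non-degenerate with respect to $\Delta(f_a)$ (Lemmas~\ref{lem::Newton-Polygon-ineq-m=0}--\ref{lem::Newton-Polygon-ineq-n=m}), the graded ring $\overline R(f_a)_{\dd}/\overline I_{\dd}$, graded by $w$, is the associated graded of the (compactly supported) twisted de Rham cohomology computing $L(\chi,f_a;T)^{(-1)^{N-1}}$, equipped with its Newton polyhedron filtration, which by \cite{yu2012irregular} agrees with the irregular Hodge filtration — with the monomial of weight $w$ landing in the $w$-th step of the latter (this is \eqref{eq:newton-monomial} for the compactly supported group; the ordinary group would place it in the $(N-w)$-th step, and the two prescriptions are compatible through the duality recorded in Proposition~\ref{prop::EMHS-properties} and Lemma~\ref{lem:dual-dimension-G}). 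Passing through the Kummer covering $(x_i,y_j)\mapsto(x_i^d,y_j^d)$ of \S\ref{sec:geom-intpn} turns this group into an isotypic component of a twisted de Rham cohomology of the type in Proposition~\ref{prop::geometric-realization-convolution}, which by non-resonance and Corollary~\ref{cor::forget-support-iso} is a de Rham fibre of $\Hyp(\alpha;\beta)$ — up only to the coordinate change and $\mathbb{Q}^\times$-twist of \eqref{eq:reduce-to-alpha1=0}--\eqref{eq:coeff-change}, which do not affect the irregular Hodge filtration. Theorem~\ref{thm::hodge-number} then says that the jumps of that filtration occur exactly at $\theta(k)$ with $\dim\mathrm{gr}^p_{F_{\irr}}=\#\theta^{-1}(p)$, so $\{\,w(u)\mid u\in\mathscr{B}(f_a)_{\dd}\,\}=\{\,\theta(k)\mid 1\le k\le n\,\}$, and $\HP(\Delta(f_a)_{\dd})$ is the irregular Hodge polygon attached to $\theta$.

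The step I expect to be the main obstacle is the comparison in the third paragraph: tracking precisely which twisted de Rham group (compactly supported or not, with $f_a$ or $-f_a$, with $\chi$ or $\chi^{-1}$) the Adolphson--Sperber module $\overline R(f_a)_{\dd}/\overline I_{\dd}$ realises, and checking that the Adolphson--Sperber weight, the $W(\ell)$-normalisation, and Yu's Newton polyhedron filtration line up with the correct direction and without a spurious shift — the duality statements of \S\ref{subsec::emhs} and \S\ref{sec::hodge-number} are what keep this consistent. A more self-contained route would bypass the abstract comparison: exhibit an explicit monomial basis of $\overline R(f_a)_{\dd}/\overline I_{\dd}$ with exponents congruent to $-\alpha_i$ and $\beta_j$ (the ``dual'' of the basis $\{\omega_{r,\ell}\}$ of Proposition~\ref{p:coh basis}) and compute its weights directly against the facet functionals $h_k$ of $\Delta(f_a)$, by the same bookkeeping as in Lemma~\ref{lem::Newton-weight}.
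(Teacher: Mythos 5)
Your first paragraph is correct and is exactly how the paper begins: since the orders divide $p-1$ one has $\dd^{(i)}=\dd$, so the slopes of $\HP(\Delta(f_a)_{\dd})$ are the multiset $\{w(u)\mid u\in \mathscr{B}(f_a)_{\dd}\}$, of cardinality $N!V(f_a)=n$. The gap is in your main route for the second half. You assert that $\overline{R}(f_a)_{\dd}/\overline{I}_{\dd}$ ``is the associated graded of the (compactly supported) twisted de Rham cohomology'' with its Newton polyhedron/irregular Hodge filtration, with a monomial of weight $w$ landing in the $w$-th step for the compactly supported group. Neither half of this is established: the Adolphson--Sperber module is a characteristic-$p$ object attached to Dwork cohomology, and its comparison with the complex twisted de Rham group is exactly what needs an argument (the paper supplies it by quoting \cite[Thm.\,3.14]{AS93}, which shows the \emph{same explicit monomials} $g_{r,\ell}$ of Proposition~\ref{p:coh basis} form a basis of the graded quotient); and the ``$w$-th step for compact supports'' convention appears nowhere in the paper --- \eqref{eq:newton-monomial} only gives $F^{N-w(Q)}_{\rr{NP}}$ for the ordinary group. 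More seriously, the character bookkeeping you defer to ``duality'' is where the answer is decided: with $\dd=(\overline{a_2},\dots,\overline{a_n},b_1,\dots,b_m)$, the module $R_{\dd}(f_a)$ consists of monomials with exponents congruent to $(-\alpha_i,\beta_j)$, i.e.\ it corresponds (after the Kummer pullback) to the \emph{conjugate} data $(\bar\alpha,\bar\beta)$, not to the isotypic component $(\tilde\chi\times\rho)$ whose basis is $\{\omega_{r,\ell}\}$. So the multiset $\{w(u)\mid u\in\mathscr{B}(f_a)_{\dd}\}$ is a priori $\{n+m-1-\bar\theta(k)\}$, and without the identity $\theta(k)+\bar\theta(\sigma(k))=n+m-1$ (the computation inside Lemma~\ref{lem:dual-dimension-G}, equivalently the duality \eqref{eq::duality}) your argument could just as well output the complementary multiset $\{n+m-1-\theta(k)\}$. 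As written, the step you yourself call ``the main obstacle'' is precisely the step that is missing.

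The paper closes it as follows: the functions $g_{r,\ell}$ form a basis of $\overline{V}_{\overline{\dd}}$ for the conjugate exponent vector $\overline{\dd}=(a_2,\dots,a_n,\overline{b_1},\dots,\overline{b_m})$ by \cite[Thm.\,3.14]{AS93}; their weights are $n+m-1-\theta(s_r+\ell)$ by Lemma~\ref{lem::Newton-weight}; and then \eqref{eq:newton-monomial} together with the duality \eqref{eq::duality} (and the $\theta\leftrightarrow\bar\theta$ symmetry) converts $w(S_{\overline{\dd}})$ into $w(S_{\dd})=\{\theta(k)\}$. Your ``fallback'' --- exhibiting an explicit monomial basis with exponents congruent to $(-\alpha_i,\beta_j)$ and computing its weights against the facet functionals $h_k$ as in Lemma~\ref{lem::Newton-weight}, then checking the complementarity of weights --- is essentially this argument, and it is the part you would actually have to carry out; the abstract EMHS detour in your primary route does not substitute for it.
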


\begin{proof}
	Since $\alpha_i,\beta_j$ have denominators dividing $p-1$, the numbers $\dd^{(i)}$ are equal to $\dd$ for every $i\ge 1$. 
	In particular, the multi-set of slopes of $\HP(\Delta(f_a)_{\dd})$ coincides with $w(S_{\dd})=\{\omega(u)|u\in S_{\dd}\}$. 

	The cohomology classes $\omega_{r,\ell}=g_{r,\ell}\cdot \eta$ in Proposition \ref{p:coh basis} form a basis of de Rham cohomology group $\HH^{n+m-1}_{\dR}(U_a, f_a)^{(G,\tilde{\chi}\times \rho)}$. 
	By the calculation of cohomology groups \cite[\S\,3,\,Thm.\,3.14]{AS93}, the functions $\{g_{r,\ell}\}$ 
	also forms a basis of $\overline{V}_{\overline{\dd}}$, with $\overline{\dd}=(a_2,\dots,a_n,\overline{b_1},\dots,\overline{b_m})$. 
	Hence $w(S_{\overline{\dd}})=\{w(g_{r,\ell})|0\le r\le m,1\le \ell \le s_{r+1}-s_r\}$. 
	By \eqref{eq:newton-monomial}, Lemma \ref{lem::Newton-weight} and the duality \eqref{eq::duality}, the set of weights $w(S_{\dd})$ coincides with the set of irregular Hodge numbers \eqref{eq:thetaFrob}. Then the proposition follows. 
\end{proof}

\subsection{Frobenius slopes of hypergeometric sums: proof of \cref{t:Frobslope}} \label{ss:resonantFrob}

\begin{secnumber} \label{sss:hypsum-resonant}
	We prove \cref{t:Frobslope} by induction on $n$. 
	We first show that we can deduce the assertion in the resonant case from the induction hypothesis. 
	Suppose the theorem holds when the rank of the hypergeometric $F$-isocrystal is less than $n$. 

	We slightly modify our convention on $\alpha,\beta$ by replacing those $\alpha_i,\beta_j=0$ by $1$ and then arranging them as $0<\alpha_1\le \alpha_2 \le\dots \le \alpha_n \le 1$ and $0<\beta_1\le \dots \le \beta_m \le 1$. 
	Note that this modification does not change the set $\{\theta(1),\dots,\theta(n)\}$ of irregular Hodge numbers. 
	After twisting a multiplicative character, we may assume that $\chi_n=\rho_m=1$ are the trivial characters (i.e. $\alpha_n=\beta_m=1$).  
	Then we have the following identities:
	\begin{eqnarray} \label{eq:dec-resonant}
		&& \CHyp_{(n,m)}(\chi;\rho)(a) \\ 
		\nonumber
		&=& \sum_{x_i, y_j\in k^{\times}} \psi\biggl(\sum_{i=1}^{n-1} x_i+a\frac{y_1\cdots y_m}{x_1\cdots x_{n-1}}-\sum_{j=1}^m y_j\biggr) \cdot 
		\prod_{i=1}^{n-1} \chi_i(x_i)
		\prod_{j=1}^{m-1} \rho_{j}^{-1}(y_j) \\
		\nonumber
		&=& \sum_{x_i, y_j\in k^{\times},y_m\in k} \psi\biggl( \sum_{i=1}^{n-1} x_i-\sum_{j=1}^{m-1} y_j+y_m(a\frac{y_1\cdots y_{m-1}}{x_1\cdots x_{n-1}}-1)\biggr) \cdot 
		\prod_{i=1}^{n-1} \chi_i(x_i)
		\prod_{j=1}^{m-1} \rho_{j}^{-1}(y_j)  \\
		\nonumber
		&& - \sum_{x_i, y_j\in k^{\times}} \psi\biggl( \sum_{i=1}^{n-1} x_i-\sum_{j=1}^{m-1} y_j\biggr) \cdot 
		\prod_{i=1}^{n-1} \chi_i(x_i)
		\prod_{j=1}^{m-1} \rho_{j}^{-1}(y_j)  \\
		\nonumber
		&=& 
		q\CHyp_{(n-1,m-1)}(\chi';\rho') - (-1)^{m-1} \prod_{i=1}^{n-1} G(\psi,\chi_i)\prod_{j=1}^{m-1} G(\psi,\rho_j^{-1}),
	\end{eqnarray}
	where $\chi'=(\chi_1,\dots,\chi_{n-1})$, $\rho'=(\rho_1,\dots,\rho_{m-1})$, and $G(\psi,\chi_i)=\sum_{x\in k^{\times}}\psi(x)\chi_i(x)$ denotes the Gauss sum. 
In particular, the above sum can be written as a sum of $n$ Weil numbers by induction. 
	
	Let $\theta'$ be the function \eqref{eq:thetaFrob} defined by rational numbers $\alpha_1,\dots,\alpha_{n-1},\beta_1,\dots,\beta_{m-1}$. 
	Then, we have
	\[
	\theta(k)=\theta'(k)+1, ~\forall~ 1\le k\le n-1
	\] 
	and 
	\[\theta(n)=\sum_{i=1}^n (1-\alpha_i) + \sum_{\beta_j< 1} \beta_j = \ord_q \biggl( \prod_{i=1}^{n-1} G(\psi,\chi_i)\prod_{j=1}^{m-1} G(\psi,\rho_j^{-1}) \biggr),
	\]
	where second identity follows from Stickelberger's theorem $\ord_q G(\psi,\omega^{-k})=\frac{k}{p-1}$. 
	Then, the theorem in the resonant case follows from the induction hypothesis and the above decomposition. 
\end{secnumber}

\begin{secnumber}\label{sss:proof-final}
	By the previous argument, we may assume that the assertion for the hypergeometric sum of type $(n,m)$ defined by a resonant pair $(\alpha,\beta)$ is already proved. 
	It suffices to treat the non-resonant case. 
	We may assume $\chi_1=1$ is trivial. 

	We set $\widetilde{f}_a(x_2,\dots,x_n,y_1,\dots,y_m)=f_a(x_2^{p-1},\dots,x_n^{p-1},y_1^{p-1},\dots,y_m^{p-1})$. We first prove the ordinariness of exponential sum associated to $\widetilde{f}_a$ (Definition \ref{d:ordinary}) using a theorem of Wan \cite{Wan93}. 

	Let $\delta_1,\cdots,\delta_{m+n}$ be all the co-dimension $1$ surfaces of $\Delta=\Delta(\widetilde{f}_a)$ which do not contain the origin. Let $\widetilde{f}_a^{\delta_i}$ be the restriction of $\widetilde{f}_a$ to $\delta_i$, which is also non-degenerate. 
	By \cite[Thm.\,3.1]{Wan04}, $\widetilde{f}_a$ is ordinary if and only if each $\widetilde{f}_a^{\delta_i}$ is ordinary. 

	Each Laurent polynomial $\widetilde{f}_a^{\delta_i}$ is diagonal in the sense of \cite[\S\,2]{Wan04}. 
	If $V_1,\cdots,V_{m+n-1}$ denote the vertex of $\delta_i$ written as column vectors, the set $S(\delta_i)$ of solutions of
	\begin{displaymath}
		(V_1,\cdots,V_{m+n-1}) 
		\begin{pmatrix}
			r_1 \\ \vdots \\ r_{m+n-1}
		\end{pmatrix}
		\equiv 0 ~(\textnormal{mod } 1),\quad r_i \textnormal{ rational, } 0\le r_i<1,
	\end{displaymath}
	forms an abelian group, which is isomorphic to $(\mathbb{Z}/ (p-1) \mathbb{Z})^{n+m-1}$. 
	We deduce that for each $\delta_i$, $\widetilde{f}_a^{\delta_i}$ is ordinary by \cite[Cor.\,2.6]{Wan04}. 

	We have a decomposition of exponential sums as follows:
	\begin{equation} \label{eq:dec-sum}
	\sum_{x_i,y_j\in k^{\times}}\psi(\widetilde{f}_a(x_i,y_j)) = 
	\sum_{\chi_i,\rho_j} \CHyp_{(n,m)}(\chi,\rho)(a),
	\end{equation}
	where the sum is taken over all multiplicative characters $\chi_i,\rho_j$ with $2\le i\le n, 1\le j\le m$ of orders dividing $p-1$. 
	We have a similar decomposition for $\mathscr{B}_{\dd}$ (\S~\ref{sss:HP}) given by
	\[
	\mathscr{B}_{1}(\widetilde{f}_a) =\bigsqcup_{\dd} \mathscr{B}_{\dd}(f_a),
	\]
	where $1=(0,0,\dots,0)$ and $\dd$ is taken over all $(n+m-1)$-tuple of rational numbers with denominators $p-1$ in $[0,1)$.  

	On the left-hand side of \eqref{eq:dec-sum}, we have shown ``Newton equals to Hodge'' (i.e. the ordinariness of $\widetilde{f}_a$). 
	Together with the ``Newton above Hodge'' for each hypergeometric sum (\cref{t:AS}), we deduce that ``Newton equals to Hodge'' for each component of the right-hand side. 
	Then the assertion in the non-resonant case follows from Proposition \ref{p:TwoHP}.
	\hfill \qed
\end{secnumber}

In particular, our proof shows Proposition \ref{p:TwoHP} in the resonant case. 

\begin{cor}
	Proposition \ref{p:TwoHP} holds without the non-resonant assumption.
\end{cor}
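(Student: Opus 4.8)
The plan is to read this off directly from the two polygon identities already proved in \S\ref{ss:resonantFrob}, bypassing the de Rham basis of Proposition \ref{p:coh basis} (which a priori requires non-resonance). Fix $a\in k^{\times}$ and multiplicative characters $\chi_2,\dots,\chi_n,\rho_1,\dots,\rho_m$ whose orders divide $p-1$ --- now allowed to form a resonant family --- and let $\dd$ be the associated tuple as in the paragraph following \cref{d:ordinary}. The goal is the equality of $\HP(\Delta(f_a)_{\dd})$ with the irregular Hodge polygon attached to $\theta$ in \eqref{eq:thetaFrob}.

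First I would invoke the ordinariness established inside the proof of \cref{t:Frobslope}: in \S\ref{sss:proof-final} one shows via Wan's criterion that $\widetilde{f}_a$ is ordinary, and then, using the decomposition \eqref{eq:dec-sum} together with the splitting $\mathscr{B}_{1}(\widetilde{f}_a)=\bigsqcup_{\dd}\mathscr{B}_{\dd}(f_a)$, one transfers ``Newton equals Hodge'' to every isotypic summand. The index $\dd$ there runs over \emph{all} $(n+m-1)$-tuples of rationals with denominator $p-1$ in $[0,1)$, hence in particular over the resonant summands. Consequently, for our $\dd$, the Frobenius Newton polygon of $\CHyp_{(n,m)}(\chi;\rho)(a)$ (equivalently, of the normalized $L$-function) equals $\HP(\Delta(f_a)_{\dd})$.

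Second I would invoke \cref{t:Frobslope} itself, which the two-step argument of \S\ref{ss:resonantFrob} has already proved in both regimes: in the resonant case by induction on $n$ via the decomposition \eqref{eq:dec-resonant} and Stickelberger's theorem (\S\ref{sss:hypsum-resonant}), and in the non-resonant case in \S\ref{sss:proof-final}. It identifies that same Frobenius Newton polygon with the irregular Hodge polygon of \eqref{eq:thetaFrob}. Chaining the two identities yields $\HP(\Delta(f_a)_{\dd})=$ irregular Hodge polygon, which is exactly Proposition \ref{p:TwoHP} with the non-resonance hypothesis removed.

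The argument is free of circularity: the resonant case of \cref{t:Frobslope} uses only the induction hypothesis and the Gauss-sum identity, never Proposition \ref{p:TwoHP}, while the ordinariness input uses only Wan's theorem and \cref{t:AS}. The one point deserving attention --- and the closest thing to an obstacle here --- is checking that the splitting $\mathscr{B}_{1}(\widetilde{f}_a)=\bigsqcup_{\dd}\mathscr{B}_{\dd}(f_a)$ is genuinely indexed by every admissible $\dd$, resonant ones included, so that ordinariness of $\widetilde{f}_a$ forces ordinariness of each resonant hypergeometric component; this is immediate from the construction of $\widetilde{f}_a$ as the variable-wise $(p-1)$-st power substitution in $f_a$ and the standard decomposition of exponential sums over $(k^{\times})^{n+m-1}$ into multiplicative-character twists.
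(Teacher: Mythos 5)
Your proposal is correct and is essentially the paper's own argument: you chain the identity ``Frobenius Newton polygon $=$ Adolphson--Sperber Hodge polygon'' (which the ordinariness argument of \S\ref{sss:proof-final} gives for every character tuple $\dd$, resonant or not, via the splitting of $\mathscr{B}_{1}(\widetilde{f}_a)$) with ``Frobenius Newton polygon $=$ irregular Hodge polygon'' (the resonant case of \cref{t:Frobslope} proved in \S\ref{sss:hypsum-resonant}), exactly as the paper does. Your added remarks on non-circularity and on the indexing of the decomposition are consistent with, and merely make explicit, what the paper's two-line proof leaves implicit.
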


\begin{proof}
	In the resonant case, the Frobenius Newton polygon equals the irregular Hodge polygon by \S \ref{sss:hypsum-resonant}. 
	By the proof in \S \ref{sss:proof-final}, the Frobenius Newton polygon equals the Hodge polygon defined by Adolphson--Sperber. Then the assertion follows. 
\end{proof}

\subsection*{Acknowledgement}

The authors thank Javier Fresán, Claude Sabbah, and Christian Sevenheck for their valuable discussions.
YQ acknowledges the financial support from the European Research Council (ERC) under the European Union's Horizon 2020 research and innovation program (Grant agreement n° 101020009) for part of this work. DX acknowledges the financial support from the National Natural Science Foundation of China Grants (Nos. 12222118 and 12288201) and the CAS Project for Young Scientists in Basic Research, Grant No. YSBR-033.
Part of the work was done when YQ was staying at Morningside Center of Mathematics, and he would like to thank Morningside Center of Mathematics for its hospitality. 

\bibliography{hyp.bib}
\bibliographystyle{abbrv}

\end{document}